\numberwithin{equation}{section}
\newtheorem{theorem}{Theorem}%[section]
\newtheorem{lemma}[theorem]{Lemma}
\newtheorem{corollary}[theorem]{Corollary}
\newcommand{\D}{\mathbf{D}}
\newcommand{\E}{\mathbf{E}}
\newcommand{\Z}{\mathbf{Z}}
\renewcommand{\P}{\mathbf{P}}
\newcommand{\R}{\mathbf{R}}
\newcommand{\HH}{\mathbf{H}}
\newcommand{\eps}{\varepsilon}
\newcommand{\SLE}{{\rm SLE}}
\newcommand{\CLE}{{\rm CLE}}
\newcommand{\one}{{\bf 1}}
\begin{document}

\title{Connection probabilities for conformal loop ensembles}

\author{Jason Miller}
\address{Statslab, Center for Mathematical Sciences, University of Cambridge, Wilberforce Road, Cambridge CB3 0WB, UK}
\email {jpmiller@statslab.cam.ac.uk}

\author{Wendelin Werner}
\address{Department of Mathematics, ETH Z\"urich, R\"amistr. 101, 8092 Z\"urich, Switzerland} 
\email{wendelin.werner@math.ethz.ch}

\begin{abstract}
The goal of the present paper is to explain, based on properties of the conformal loop ensembles $\CLE_\kappa$ (both with simple and non-simple loops, i.e., for the whole range $\kappa \in (8/3, 8)$) how to derive the connection probabilities 
in domains with four marked boundary points for a conditioned version of $\CLE_\kappa$ which can be interpreted as a $\CLE_{{\kappa}}$
with wired/free/wired/free {boundary conditions} on four boundary arcs (the wired parts being viewed as portions of to-be-completed loops). 
In particular, in the case of a square, we prove that the probability that the two wired sides of the square hook up so that they create one single loop 
is equal to $1/(1 - 2 \cos (4 \pi / \kappa ))$. 

Comparing this with the corresponding connection probabilities for discrete O($N$) models for instance indicates
that if a dilute O($N$) model (respectively a critical FK($q$)-percolation model on the square lattice)
has a non-trivial conformally invariant scaling limit, then necessarily  this scaling limit is $\CLE_\kappa$ where $\kappa$ is the value in $(8/3, 4]$ such that 
$-2 \cos (4 \pi / \kappa )$ is equal to  $N$ (resp.\ the value in $[4,8)$ such that $-2 \cos (4\pi / \kappa)$ is equal to $\sqrt {q}$). 

Our arguments and computations build on the one hand on Dub\'edat's SLE commutation relations (as developed and used by Dub\'edat, Zhan or Bauer-Bernard-Kyt\"ol\"a) and on the other hand, on the construction and properties of the conformal loop ensembles and their relation to Brownian loop-soups, restriction measures, and the Gaussian free field (as recently derived in works with Sheffield and with Qian).
\end{abstract}

%\date{\today}
\maketitle

\setcounter{tocdepth}{1}
\tableofcontents

\label{sec:intro}

\section {Introduction}

\subsection {A motivation from discrete models}

The fact that {\em if} critical bond percolation on the square lattice possesses a conformally invariant scaling limit (recall that establishing 
this is still an open question), {\em then} the only possible candidate for the scaling limit of the  interfaces is the Schramm-Loewner evolution ($\SLE_\kappa$)
with parameter $\kappa =6$ was first pointed out by Oded Schramm \cite{Schramm0,Schramm} in 1999 using the following simple argument: 
First, the (by now classical) argument based on the conformal Markov property shows that this
scaling limit would have to be an $\SLE_\kappa$ path {for some $\kappa > 0$}. It then remains to pin down the actual value of $\kappa$:  
To do so, consider an $\SLE_\kappa$ in a square from the bottom left corner to {the} top left corner, and note that one can compute 
the probability that it hits the right-hand side of the square along the way.  {It} turns out that $\kappa = 6$ is the only value for which this probability is equal to $1/2$.
On the other hand, for discrete critical bond  percolation on  a (quasi)-square $[0, n+1] \times [ 0, n]$ portion of the square grid,
duality shows that the probability that there exists a left-to-right open crossing (which means that the discrete percolation interface analog of the previous $\SLE_\kappa$ 
hits the right-hand side of the square) is equal to $1/2$ for all $N \ge 1$, and one can therefore deduce that this hitting probability should still be $1/2$ in the scaling limit.  
One can then conclude that $\SLE_6$ is the only possible candidate for a conformally invariant scaling limit of the percolation interface. 

Note that $\SLE_6$ also possesses other properties (for instance the locality property derived in \cite{LSW1}) that also imply that it is the only possible conformally invariant 
scaling limit for critical percolation interfaces without referring to any discrete crossing probability, but the above argument is {already} short, direct and convincing.
 {Recall that it is known by the work of Smirnov  \cite{SmirnovPercolation} that critical site percolation on the triangular lattice is 
indeed conformally invariant in the scaling limit and that discrete percolation interfaces do converge to $\SLE_6$ 
(see for instance \cite{Wlnperco} for a survey of the actual non-trivial proof of the convergence of the interfaces once one controls crossing probabilities). Recall also that 
describing the scaling limit of critical percolation on other planar lattices is still an open problem.}  

Two of the most natural and classical classes of discrete models that are supposed to give rise to SLE curves in the scaling limit are the O($N$) models (both the dense and the dilute versions) 
and the critical bond FK($q$)-percolation models (we will briefly recall the definition of these models in Appendix~\ref{App1}). 
{On the square lattice,} exactly three of these models have been proved to indeed converge to an SLE-based scaling limit: The Ising model (which is the O($1$) model, where {the} interfaces 
converge to $\SLE_3$ paths), the FK-percolation models for $q=2$ (this is the FK model related to the Ising model,
where {the} interfaces converge to $\SLE_{16/3}$ paths) -- see \cite{CDHKS,KempaSmirnov} and the references therein, or \cite{SmirnovICM}
for a survey, and the FK($q$)-percolation model in the limit $q=0^+$ (this is the uniform spanning tree model, 
and its boundary Peano curve is shown to converge to $\SLE_8$) -- see \cite{LSWLERW}. See also \cite{SmirnovPercolation} (and \cite{CN,Wlnperco})
for site percolation on the triangular lattice that can be viewed as an O($0$) model on the hexagonal grid that gives rise to $\SLE_6$ paths. 

It has been conjectured (see for instance \cite{kn,RS,Dup}), based on the identification between exponents, probabilities or dimensions
that one can rigorously compute for SLE processes on the one hand and the corresponding quantities that had been previously predicted 
using methods from theoretical physics (conformal field theory, quantum gravity or Coulomb gas methods, see for instance \cite{N,CardyLN})
for the asymptotic behavior of the discrete models on the other hand, 
that the O($N$) models have a non-trivial and conformally invariant scaling limit 
for all $N \in (0,2]$ and that this scaling limit should be related to $\SLE_\kappa$ curves for $N = - 2 \cos (4 \pi / \kappa )$, where $\kappa \in (8/3, 4]$ if one considers the dilute O($N$) model and where $\kappa \in (4,8)$ if one considers the dense O($N$) model. 
Similarly, the scaling limit of the critical FK($q$)-percolation model interfaces should be non-trivial for $q \in (0,4]$ and described by $\SLE_\kappa$ curves, for $\sqrt {q} = - 2 \cos (4 \pi / \kappa)$, where $\kappa \in [4, 8)$ (mind of course that $\cos (4 \pi / \kappa )$ is negative for all $\kappa \in (8/3, 8)$).  

\subsection{Content of the present paper}
Recall that a $\CLE_\kappa$ (conformal loop ensemble) for $\kappa \in (8/3, 8)$ is a particular 
random collection of loops in a simply connected planar domain such that the loops in a $\CLE_\kappa$ are $\SLE_\kappa$ type loops. While the $\SLE_\kappa$ curves can be argued (via the conformal Markov property) to be the only possible conformally invariant 
scaling limit of single interfaces for a wide family of discrete interfaces for lattice models with well-chosen boundary conditions (that involve choosing two special 
points on the boundary of the domain, and choosing one boundary arc to be ``wired'' while the other one is ``free''), the $\CLE_\kappa$ can be argued to be the only possible conformally invariant scaling limit for the joint law of {\em all} {of the} macroscopic interfaces for the same models with ``uniform'' free boundary conditions. 
For the aforementioned lattice models (FK($q$)-percolation and O($N$)), this convergence to CLE has now been proved for the very same cases as for individual interfaces, see \cite{LSWLERW,CN,ks2015fkcle2,bh}.

The results of the present paper on $\CLE_\kappa$ connection probabilities provide a generalization of Schramm's original argument for percolation that we outlined above to all {of} these models. More specifically, for all $\kappa \in (8/3, 8)$, we will first explain how to define the law of a $\CLE_\kappa$ in a conformal rectangle, with ``wired'' boundary conditions on two opposite sides of the rectangle.  The idea is to start with the usual $\CLE_\kappa$ and to partially discover it starting from two boundary points; in other words, our $\CLE_\kappa$ with free/wired/free/wired boundary conditions  will be defined as a conditioned version of the usual $\CLE_\kappa$. 
The wired portions of the boundary should be thought of as the discovered parts of partially discovered loops of an usual $\CLE_\kappa$ (see Figure~\ref{pic}). 
\begin{figure}[ht!]
  \includegraphics[width=0.4\textwidth, angle=180]{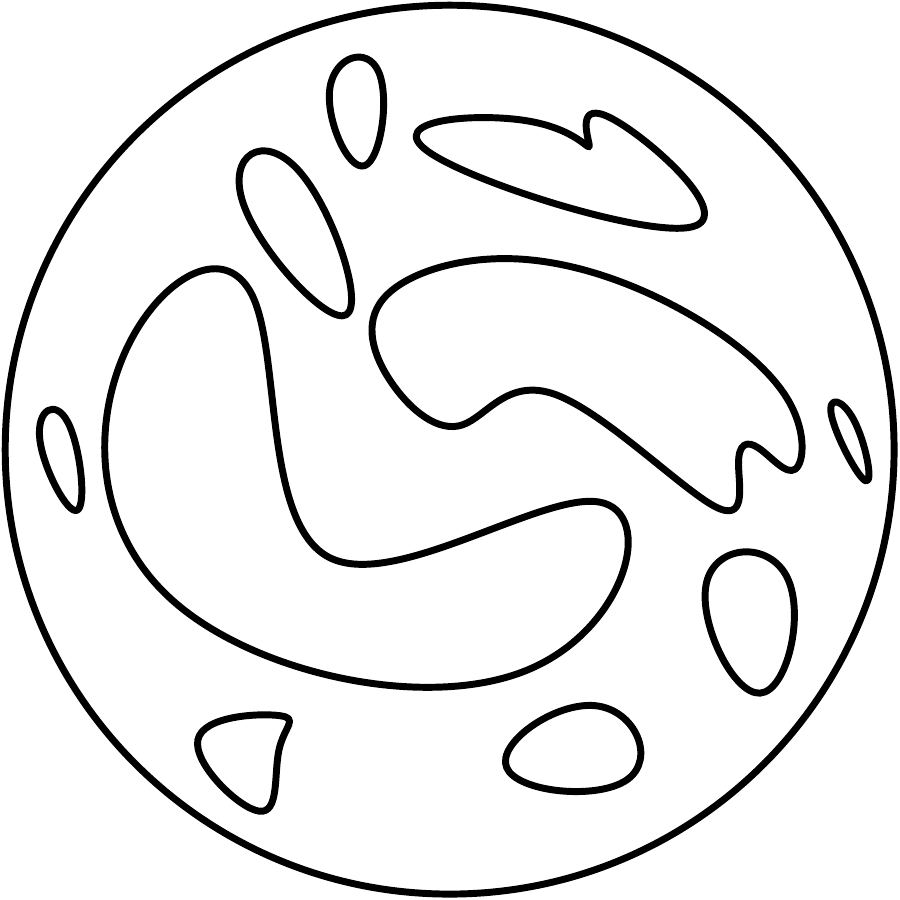}
  \quad
  \includegraphics[width=0.4\textwidth, angle=180]{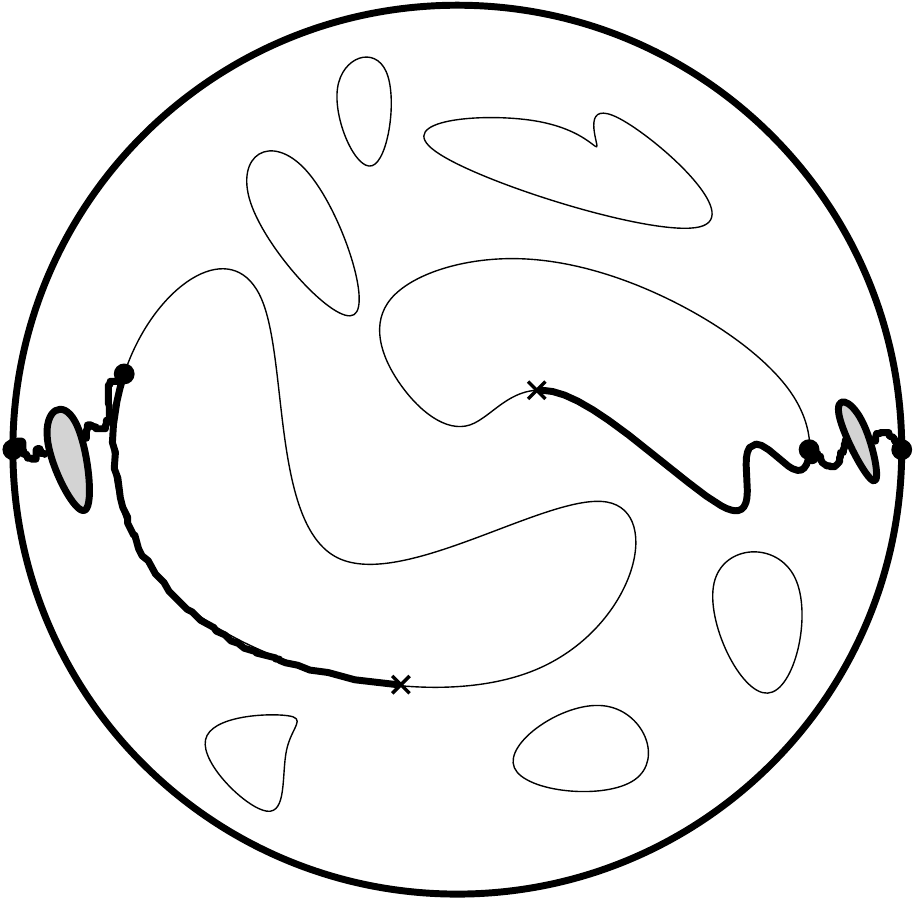}
  \caption{Discovering a simple CLE starting from two boundary points and creating a CLE with two wired boundary arcs (sketch). In this example, the two wired boundary arcs are not part of the same loops.}
  \label{pic}
\end{figure}

Then, for these $\CLE_\kappa$ with two wired boundary arcs, it turns out that the probability that the
 two wired pieces are part of the same loop  (see Figure \ref{pic2}) is a conformally invariant ($\kappa$-dependent) function of the domain with its two boundary arcs. 
It therefore suffices to determine it for $\CLE_\kappa$ in rectangles $[0,L]\times [0,1]$ with wired boundary conditions on the two vertical sides. 
As we shall explain below, Dub\'edat's commutation relations (see \cite{Dubedat, Dubedat2, Dubedat3, BBK, Zhan1, Zhan2, Zhan3}) 
show that for some explicit function $Y_\kappa (L)$ (see Section~\ref{S2}), the   
probability that the two vertical sides of the rectangle are part of the same $\CLE_\kappa$ loop is of the form 
$$ \frac { Y_\kappa (L)} { Y_\kappa (L) + \theta_\kappa Y_\kappa (1/L)}$$
for some unknown value $\theta_\kappa$. 
The goal of the present paper will be to determine the value $\theta_\kappa$ for the entire range $\kappa \in (8/3, 8)$: 

\begin{figure}[ht!]
  \includegraphics[scale=0.7]{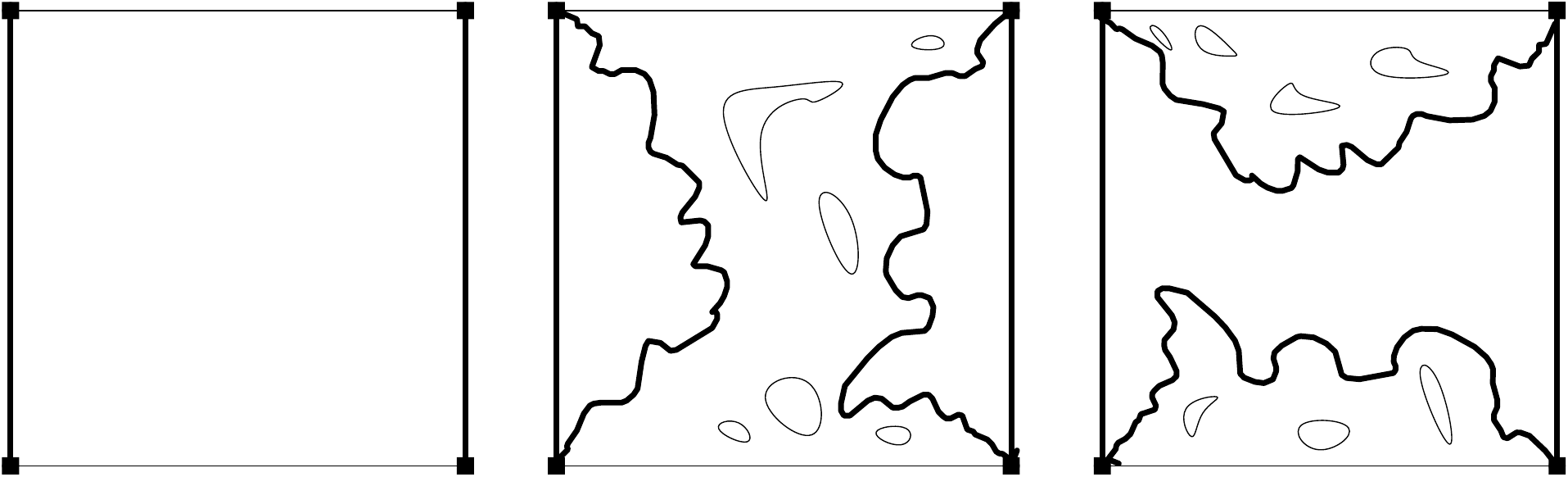}
  \caption{A square with wired vertical boundaries, and a sketch of the two possible connection configurations, together with the remaining outermost CLE loops}
  \label{pic2}
\end{figure}
 
\begin{theorem}
\label{mainprop}
The constant $\theta_\kappa$ is equal to $- 2 \cos (4 \pi / \kappa )$. In other words, when one considers a conformal square (i.e., $L=1$), the probability that 
the boundary arcs hook up into one single loop is equal to $1 / (1 -2 \cos (4\pi / \kappa) )$. 
\end{theorem}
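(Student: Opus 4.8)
The plan is to take as an input (from Section~\ref{S2}, via Dub\'edat's commutation relations) that the hookup probability in the rectangle $[0,L]\times[0,1]$ with wired vertical sides equals $Y_\kappa(L)/\bigl(Y_\kappa(L)+\theta_\kappa Y_\kappa(1/L)\bigr)$ for a single $\kappa$-dependent constant $\theta_\kappa$, and to determine $\theta_\kappa$ by producing one additional exact identity. Since $\theta_\kappa$ does not depend on $L$, it is enough to evaluate the hookup probability, or just its leading/next-to-leading asymptotics, in one conveniently chosen regime; equivalently, since $Y_\kappa(1)$ cancels, it suffices to compute the square ($L=1$) hookup probability $1/(1+\theta_\kappa)$ in some independent way and to check that it equals $1/(1-2\cos(4\pi/\kappa))$.

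To obtain such an identity I would first describe the $\CLE_\kappa$ with wired/free/wired/free boundary conditions concretely. Map the rectangle to $\mathbb{H}$, sending the four corners to $x_1<x_2<x_3<x_4$, so that one wired arc becomes $[x_2,x_3]$ and the other $(-\infty,x_1]\cup[x_4,\infty)$, the free arcs being $[x_1,x_2]$ and $[x_3,x_4]$. The key point is that each wired arc is (a portion of) the outer boundary of a boundary-touching $\CLE_\kappa$ loop, so that, using the description of such loops and of their partial discovery via Brownian loop soups, one-sided restriction measures and the Gaussian free field (the inputs quoted from the works with Sheffield and with Qian), the boundary of the to-be-completed loop attached to a wired arc can be traced by an explicit $\SLE_\kappa(\underline\rho)$ process — of the type appearing in $\CLE_\kappa$ explorations, built from $\SLE_\kappa(\kappa-6)$ — started at an endpoint of that arc, with force points at the remaining marked points. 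The event that the two wired arcs hook up then becomes a concrete ``swallowing'' event for such a process (the curve started from one wired arc absorbs the other wired arc before it closes up), whose probability can in turn be computed.

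That swallowing probability is a standard, if somewhat involved, $\SLE_\kappa(\underline\rho)$ boundary computation: it solves a second-order (null-vector/BPZ) ODE in the remaining cross-ratio, with three regular singular points, hence is a ratio of hypergeometric functions; in particular it is automatically of the form predicted in Section~\ref{S2}, with $Y_\kappa(L)$ and $Y_\kappa(1/L)$ the two natural solutions. The constant $\theta_\kappa$ is then extracted by matching amplitudes — concretely, by comparing the rate at which the hookup probability degenerates (to $0$ or to $1$) as two marked points collide against the explicit exponents and prefactors of $Y_\kappa(L)$ and $Y_\kappa(1/L)$ in that limit. This is what should yield $\theta_\kappa=-2\cos(4\pi/\kappa)$, equivalently the square hookup probability $1/(1-2\cos(4\pi/\kappa))$. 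As sanity checks, this gives $1/3$ for $\kappa=4$ (also obtainable directly from the Gaussian free field description of $\CLE_4$), $1/2$ for $\kappa=6$ (matching Schramm's percolation crossing computation), and the value $1$ in the limits $\kappa\to 8/3^+$ and $\kappa\to 8^-$.

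Finally, the simple and non-simple ranges need slightly different handling. For $\kappa\in(8/3,4]$ the loop-soup/restriction/GFF identification above is available directly. For $\kappa\in(4,8)$ I would either rerun the $\SLE_\kappa(\underline\rho)$ argument with the parameters adapted to the non-simple regime, or transfer the statement through the coupling between a $\CLE_\kappa$ and the $\CLE_{16/\kappa}$ drawn between its loops, which relates the two hookup problems. \textbf{Main obstacle.} The delicate step is the identification in the second and third paragraphs: rigorously showing that the $\CLE_\kappa$ with wired arcs — which is only defined as a conditioned $\CLE_\kappa$ — has its to-be-completed boundary loops governed by precisely the right $\SLE_\kappa(\underline\rho)$ processes (with the correct weights and force points), that the hookup event is exactly the stated swallowing event, and that the normalisation of the resulting probabilistic answer matches the conformal-block normalisation of $Y_\kappa$ produced by the commutation relations. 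The non-simple case, where loops may touch one another and the boundary, makes this bookkeeping heavier and is where the $\CLE$ duality input is really needed.
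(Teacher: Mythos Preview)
Your high-level strategy matches the paper's: take the commutation-relation input that $H(x)=Z(x)/(Z(x)+\theta_\kappa Z(1-x))$, and pin down $\theta_\kappa$ by an independent asymptotic computation as two marked points collide. The $\kappa=4$ treatment via the GFF is also correct. But the central step of your plan has a genuine gap, and the paper resolves it very differently.

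\textbf{The circularity problem.} You propose to identify the strand emanating from one wired arc as an explicit $\SLE_\kappa(\underline\rho)$ process and then compute the hookup as a swallowing probability. The difficulty is that the marginal law of a strand in the two-wired-arc $\CLE_\kappa$ is \emph{not} an ordinary $\SLE_\kappa(\underline\rho)$ with a priori known force-point weights: it is a $\theta_\kappa$-weighted mixture of two hypergeometric (intermediate) $\SLE$s, one for each connection pattern. Equivalently, the driving function of a strand has a drift determined by $H$ itself, hence by $\theta_\kappa$. So ``identify the $\underline\rho$, then compute'' presupposes the answer. You flag this as the main obstacle, but you do not supply the missing input. (A side issue: for $\kappa\in(8/3,4)$ the curves are simple and do not hit the boundary, so the hookup is not a swallowing event at all; the pairing of simple strands is a different kind of event.)

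\textbf{What the paper actually does.} It avoids the circularity by never trying to name the law of the two-arc strand. For $\kappa\in(4,8)$ it works instead in a $\CLE_\kappa$ with \emph{one} wired arc, where the loop completion is a plain $\SLE_\kappa$, and relates the hookup asymptotics to a Bessel/$\SLE_\kappa(\kappa-6)$ local-time computation (Lemmas~\ref{main0}, \ref{main1}, \ref{main5}). For $\kappa\in(8/3,4)$ it uses the Brownian loop-soup decomposition of partially explored clusters: up to negligible error, the hookup event becomes the event that an ordinary $\SLE_\kappa$ intersects an \emph{independent} one-sided restriction sample of exponent $(6-\kappa)/(2\kappa)$ attached to the small arc (Lemmas~\ref{main2}, \ref{main3}), and that probability is computed directly via a hypergeometric ODE for the restriction martingale. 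In both regimes the constant emerges from a connection formula for hypergeometric functions, but the events being computed live in settings where the relevant $\SLE$ law is known \emph{independently of} $\theta_\kappa$.

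\textbf{On your duality suggestion.} Transferring through the $\kappa\leftrightarrow16/\kappa$ coupling does not match the problem: the paper explicitly notes that the $\kappa$'s sharing the same hookup probability satisfy $2/\kappa+2/\tilde\kappa=1$, not $\kappa\tilde\kappa=16$, so the $\CLE$ duality would relate the non-simple hookup to the wrong simple parameter. The paper treats the two ranges by separate arguments.
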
 

While in the present introduction, we put some emphasis on the connection probabilities (we will also sometimes use equivalently the term hook-up probability) of conformal squares, we should emphasize that we will actually derive Theorem~\ref{mainprop} by estimating 
the asymptotics of the connection probabilities of very long rectangles (when $L \to \infty$). 

One can note that the connection probability in {a} square first decreases from $1$ to $1/3$ when $\kappa$ {increases} from $8/3$ to $4$ (this is the regime where the $\CLE_\kappa$ consists of simple disjoint loops), 
and then increases again from $1/3$ to~$1$, when $\kappa$ {increases} from $4$ to $8$ (which is the regime of non-simple loops). Hence, the hook-up probability in a square with the alternating boundary conditions is always at least $1/3$. 

Deriving Theorem~\ref{mainprop} is the core of our paper and the proofs will make no reference to discrete models. 
The result for conformal squares has nevertheless some implications about the 
conjectural relation between conformal loop ensembles and discrete models that we now briefly discuss: 
One can compare it with the fact that for  discrete O($N$) models (on any lattice with some symmetries) and for the critical FK($q$) percolation model on $\Z^2$, 
the corresponding discrete connection probabilities in {a} square (or some other given symmetric shape if one {considers a lattice other than $\Z^2$}) are equal  
to $1/(1+N)$ and $1/(1+ \sqrt {q})$, independently of the size of the square, just because of a 
 symmetry/duality argument (we will recall this in Appendix~\ref{App1}). 
 This  can be viewed as the natural generalization to those models of the crossing probability of squares feature of 
critical percolation (which is the special case $q=1$, and for percolation, boundary conditions do not matter). 
Hence, one gets the following conditional results, that generalize Schramm's 1999 statement for critical percolation and $\SLE_6$ that we outlined at the very beginning of this introduction (note also that Theorem 7 in \cite{DST} shows 
rigorously that if an FK($q$)-model scaling limit for $q <4$ has a conformally invariant scaling limit, it would be boundary touching i.e., the value of $\kappa$ would have to be greater than $4$):
\begin {itemize}
\item If a dilute O($N$) model has a non-trivial and conformally invariant scaling limit consisting of simple loops, then necessarily  $N \in (0,2]$ and the scaling limit has to be 
$\CLE_\kappa$ for the value of $\kappa \in (8/3, 4]$ such that $N= - 2 \cos (4 \pi / \kappa)$.
\item If a dense O($N$) model has a non-trivial and conformally invariant scaling limit consisting of non-simple loops, then necessarily  $N \in (0,2]$ and the scaling limit has to be
$\CLE_\kappa$ for the value of $\kappa \in [4,8)$ such that $N= - 2 \cos (4 \pi / \kappa)$.
\item If a critical FK($q$)-model on $\Z^2$ has a non-trivial conformally invariant scaling limit, then necessarily  $q \in (0,4]$ and the scaling limit has to be 
$\CLE_\kappa$ for the value of $\kappa \in [4,8)$ such that  $\sqrt {q} = - 2 \cos (4 \pi / \kappa)$.
\end {itemize}

\subsection{Brief discussion of the related literature} 
Let us briefly discuss the {relationship} between the present contribution and some of the closely related results in the existing literature: 
\begin {itemize}
\item
Connection probabilities and related questions for families of SLE paths have 
 been the focus of a number of interesting mathematical works by Dub\'edat, Zhan, Bauer-Bernard-Kyt\"ol\"a and others (see for instance \cite{Dubedat, Dubedat2, Dubedat3, BBK, Zhan1, Zhan2, Zhan3,kz,kp} and the references therein) that are very much relevant and related
to the present paper, and that we will in fact use.
Let us now very briefly explain where our contribution lies with respect to these references. In these papers, the main focus {is} on the description and classification of the joint law of 
``commuting'' SLE curves (i.e., that satisfy the appropriate multiple-strands generalization of the conformal Markov property that characterize SLE curves), that start from a
 {given even number of boundary points of a simply connected domain (this classification is provided by  Dub\'edat's {\em commutation relations}).}
In the particular case where one looks at four boundary points $a_1, \ldots, a_4$ and assumes that 
all {of the} SLE curves are locally absolutely continuous with respect to $\SLE_\kappa$ curves for the same value of $\kappa$, these commutation relations allow {one} to 
describe all {of the} possible  laws of these curves {up until the curves hit each other}. If one applies this to our precise setup, these results state that one has (for each value of $\kappa$) 
exactly a one-parameter family of possible candidates for the joint law of the four strands of our $\CLE_\kappa$ with two wired boundary conditions.
Basically, one has two extremal solutions such that for the first one, the strand started from~$a_1$ always ends up at~$a_2$, while for 
the second one, it ends up at~$a_4$ (in each of these cases, the law of the pair of strands, or of the individual strands are known under several names:  
intermediate SLE processes, hypergeometric SLEs, bichordal SLE processes). In our setting, these extremal solutions
describe the law of the strands, when one conditions on one or the other of the two connection events. 
In particular, if one {\em assumes} the value of the connection probability in a square, then these commutation ideas
provide the exact form of the connection probability in terms of the  aspect-ratio of the  rectangle 
(and because {these} connection probabilities allow {one} to define a martingale when one explores one strand, it also 
provides the exact description of the driving function of one strand), we will come back to this in Section~\ref{S2}. 
The purpose of the present paper is to provide an actual computation of this connection probability in conformal squares for our wired/free/wired/free boundary conditions, uniquely via CLE considerations and no conjectural relation to discrete models, which (we believe) is a new input. 
\item
The hook-up probability in squares can be derived by other means for some special values of~$\kappa$. The fact that it is equal to $1/2$ for $\CLE_6$
is a direct consequence of the locality properties of $\SLE_6$.  
The fact that the hook-up probability is also equal to  $1/2$ when $\kappa =3$ can be viewed as a consequence of the combination of the fact that $\CLE_3$  
is the scaling limit of the Ising model \cite{bh} and the symmetries in the Ising model (this observation already appears in \cite{Dubedat,BBK} -- at that time the case $\kappa =3$ was a conditional result since it had not yet been established that the Ising model converges to $\SLE_3$ which is now established, see also the recent paper \cite{Wu}). 
Similarly, the fact that the hook-up probability is equal to $1 / (1 + \sqrt{2})$ in the special case where $\kappa =16/3$ can be viewed as a consequence of the fact \cite{ks2015fkcle,ks2015fkcle2} that $\CLE_{16/3}$ is proved to be the scaling limit of the FK($2$) model (see Appendix \ref{App1} for why the crossing probability for this discrete model is $1/(1+ \sqrt{2})$). 
Also, the fact that the hook-up probability tends to $1$ as $\kappa \to {8/3}^+$ and $\kappa \to 8^-$ could be inferred directly from the Brownian loop-soup description from \cite{SheffieldWerner} for the $\kappa \to {8/3}^+$ case, and from the relation to uniform spanning trees when $\kappa \to 8^-$ (see \cite{LSWLERW}). As we will explain in Section~\ref{SGFF}, the fact that the 
connection probability is $1/3$ in the case where $\kappa=4$ can be worked out using the relation between $\CLE_4$ and the Gaussian Free Field. 
\item
 The CLE percolation approach that we developed with
Sheffield \cite{MSWCLEpercolation} gives a continuous version of the relation between FK-percolation and 
the corresponding Potts models in terms of conformal loop ensembles $\CLE_\kappa$ and $\CLE_{16/\kappa}$. 
In \cite{mswbetarho}, combining the results of \cite{MSWCLEpercolation} with ideas from Liouville quantum gravity (LQG), we provide another way to 
get the (conjectural) relation $\sqrt {q} = -2 \cos (4\pi / \kappa)$ out of $\CLE_\kappa$ considerations only 
(this time without any reference to discrete crossing probabilities). Note that this other
CLE/LQG approach does not directly yield the value of the connection probabilities or the relation to the dilute $O(N)$ models, and that it is somewhat more elaborate than the present one. 
It is also related to the body of work from recent years that relate such questions to structures in random geometries/random maps (that can be viewed as 
trying to put some of the theoretical physics considerations onto firm mathematical ground), see for example \cite{shef-kpz,shef-burger,dms2014mating} and the references therein.
\end {itemize}

Let us now make some further bibliographic comments on the conjectural relation with discrete models: 
\begin {itemize}
\item It is interesting to see that the $q=4$ and $N=2$ thresholds for the nature of the phase transition of FK($q$) percolation and O($N$) models
show up from this $\CLE_\kappa$ {computation}, via the fact that the lowest possible $\CLE_\kappa$ hook-up probability in
conformal squares is $1/3$. Note that $q=4$ has been recently proved (rigorously and based on the study of 
discrete models) to be the threshold for existence of a continuous phase transition for FK($q$)-percolation models on $\Z^2$ \cite{DST,DGHMT}. In particular, \cite{DGHMT} shows 
rigorously that the scaling limit of the critical FK($q$) model's interfaces are trivial when $q {>} 4$ (which is of course a much stronger statement than the conditional ``then necessarily $q \in (0,4]$'' in the FK-part of our statement above). 
\item
As we have already mentioned, these relations between $q$ and $\kappa$, and between $N$ and $\kappa$ have appeared in numerous papers before. But, to the best of our knowledge, except for 
the specific particular cases of $\kappa$ that we have already mentioned, they were not based on rigorous SLE-type considerations. More precisely, the argument was the following: If these models have a conformally invariant scaling 
limit, then it must be described by an $\SLE_\kappa$ for some $\kappa$. In order to identify which value of $\kappa$ is the right one, 
the idea in those papers was to match some computation of probabilities of 
events for SLE (or of critical exponents, or of dimensions) with the corresponding values that were {\em predicted} to be the correct ones for the scaling limit of the discrete model, based on theoretical physics considerations (see for instance \cite{RS} for the FK($q$) conjecture based on the physics dimension predictions; another approach is related to the discrete parafermionic observable for FK models -- see \cite{SmirnovFK} or \cite{DuminilParafermionic} for a detailed discussion and more references -- where
the spin is defined as $\sigma = 1 -  (2  / \pi ) \arccos (\sqrt {q} /2 )$, and that is conjectured 
to correspond in the scaling limit to some SLE martingale, see for instance \cite{Werness}).
In the present paper,  we identify the candidate value of $\kappa$ using a feature that is rigorously known to hold in the discrete model (and therefore in its scaling limit, if it exists). 
So, in a way, one could view it as an SLE/CLE derivation of the conjectural relation between the lattice models and the corresponding conformal field theory (i.e., a relation between the $q$ or $N$ and the central charge $c = (6 -\kappa ) (3\kappa - 8) / (2\kappa)$ which can be derived via the SLE restriction property \cite{LSWr} 
or the loop-soup construction of CLE \cite{SheffieldWerner}). 
\item
In relation with the CLE percolation item mentioned above, let us just stress that the relation
$ (2 / \kappa) + (2 / \tilde \kappa) = 1$ between the values $\kappa \in (8/3, 4)$ and $\tilde \kappa \in (4,8)$ that have the same connection probability is not at 
all the same as the $\kappa \kappa' = 16$ duality relation between $\SLE_\kappa$ and $\SLE_{\kappa'}$ from \cite{Zhan1,Dubedat2} 
or the Edwards-Sokal coupling between $\CLE_\kappa$ and $\CLE_{\kappa'}$ derived in \cite{MSWCLEpercolation}. 
It is however possible to combine the present work (or the results of \cite{mswbetarho}) with the CLE percolation results of \cite{MSWCLEpercolation} -- this for instance indicates 
based on CLE considerations only that if the scaling limit of the critical FK-percolation model for $q=3$ on $\Z^2$ is non-trivial and conformally invariant, then
 {(modulo some a priori 
arm exponents estimates)}
the scaling limit of the critical Potts model for $q=3$ would exist as well and could be described in terms of $\CLE_\kappa$ for the value of $\kappa \in (8/3, 4]$ such that 
$\sqrt {q} = - 2 \cos ( 4\pi \kappa / 16) = -2 \cos (\pi \kappa /4)$ i.e., $\kappa = 10/3$.
\end {itemize}

\subsection {Structure of the paper} 

Let us describe the structure of the paper, and explain where we use which results from other papers:  

In Section~\ref{SCLE}, we will first recall some background material about CLEs and their properties, and then define what we will call CLE$_\kappa$ with two wired boundary arcs
when $\kappa \in (8/3, 4) \cup (4,8)$. As we will explain, this builds on the shoulders of some previous work: The definition and conformal invariance of CLEs from \cite{Sheffield,SheffieldWerner,IG3}, the exploration features of CLE as studied in \cite{WernerWu,MSWCLEpercolation}, and last but not least, the conformal invariance of hook-up probabilities as derived in \cite{mswrandomness}.  This is a section where we use directly and indirectly various background material from earlier papers on CLE. Those readers who want to focus on the computational part of the derivation of Theorem~\ref{mainprop} can choose to take the (rather intuitive) results of that section for granted.

In Section~\ref{SGFF}, which can be viewed as a brief interlude, we discuss the special case of CLE$_4$, and explain how it is possible to prove Theorem \ref{mainprop} in 
that case, using the coupling between the Gaussian free field and $\CLE_4$ (from \cite{MScle}, see also \cite{ASW}).

In Section~\ref{S2}, we briefly survey what the aforementioned works on commutation relations  \cite{Dubedat, Dubedat2, Dubedat3, BBK, Zhan1, Zhan2, Zhan3} do imply in our 
setup of wired CLEs. 

In Section~\ref{Smain}, we describe the main steps of our proofs of Theorem~\ref{mainprop}, separately in the cases $\kappa \in (8/3, 4)$ and $\kappa \in (4, 8)$, that allow {us} to reduce the determination of the connection probability {to} actual concrete estimate{s} of {probabilities of} events that we then derive in Sections~\ref{Sproof1} and~\ref{Sproof2}.
In the case of simple CLEs, the arguments in Section~\ref{Smain} will rely on the loop-soup construction of $\CLE_\kappa$, and more specifically on the decomposition of loop-soup clusters and the relation to restriction measures, as studied in \cite{QianWerner,Qian2}. 
The actual computations in  Sections~\ref{Sproof1} and~\ref{Sproof2} will involve some considerations {involving} SLE and hypergeometric functions. 

We conclude with two short appendices, recalling very briefly some basics about O($N$) models and their connection probabilities, and about the properties of hypergeometric functions that we are using in our proofs.

\section {Defining CLE with two wired boundary arcs} 
\label {SCLE}

\subsection {The case where $\kappa \in (4,8)$}

We first explain how to define the CLE$_\kappa$ with two wired boundary arcs in the case where $\kappa \in (4,8)$. 
Let us first quickly recall some features of {the} CLE$_\kappa$  itself for $\kappa \in (4,8)$ -- the reader may wish to consult \cite{MSWCLEpercolation} for more details and further references. For our purpose, it will be sufficient to focus on their non-nested versions.

Conformal loop ensembles {were} proposed in \cite{Sheffield} as the natural candidates that should describe the joint law of outermost interfaces in a number of 
critical models from statistical physics in their scaling limit. Sheffield's construction in  \cite{Sheffield} is based on the target-invariance (see \cite{Dubedat,sw2005coordinate}) of special variants of $\SLE_\kappa$, the $\SLE_\kappa (\kappa -6)$ processes. 
In the case where $\kappa \in (4,8)$, the definition of these SLE$_\kappa (\kappa -6)$ processes is rather direct 
 (see \cite{Sheffield,MSWCLEpercolation,WernerWu} for background).

 Their target-invariance property makes it possible to define for each simply connected domain $D$ and each given 
 boundary point $x$, a branching tree of such $\SLE_\kappa (\kappa - 6)$ processes, starting from some point $x$ (that is referred to as the root 
 of the branching tree) and that targets all points in $D$. 
 Branches of the tree trace loops along the way, so that this branching tree defines a random collection of loops, that Sheffield called $\CLE_\kappa$. 

 Note that with this construction, the law of this $\CLE_\kappa$ seems to depend on the choice of the root.
However, when $\kappa \in (4, 8)$, one can show that this is not the case 
(and more generally, that the law of CLE$_\kappa$ is invariant under any given conformal automorphism of $D$) 
as a direct consequence of the reversibility properties of $\SLE_\kappa (\kappa -6)$ processes. This reversibility for non-simple SLE 
 paths is a non-trivial fact, that has been established in \cite{IG3} using the connection with the Gaussian free field (GFF) and more precisely via the ``imaginary geometry'' approach developed in \cite{IG1,IG2,IG3,IG4}. 
However, with this (non-trivial) fact in hand, constructing the CLE and deriving its properties is a rather easy task. 
 We refer to \cite{MSWCLEpercolation,mswrandomness} for more details and references. 

When $\kappa \in (4,8)$, the CLE$_\kappa$ is a collection of non-simple SLE$_{\kappa}$-type 
loops in $D$. Some of them do touch each other, and some of them do touch the boundary of 
the domain. For the purpose of the present paper, it will be in fact sufficient to focus on the set of CLE$_\kappa$ loops that do touch $\partial D$. 

An important feature of this case where $\kappa \in (4,8)$ is that the branching tree is in fact a simple deterministic function of the CLE that it constructs. 
For instance, if one chooses two distinct boundary points $x$ and $x'$ and the counterclockwise boundary arc $\partial$ of $D$ from $x$ to $x'$, one can 
look at the collection of all the CLE loops that touch this arc. One can define the path obtained by starting from $x$ and that moves along $\partial$ and each time it meets 
a CLE loop for the first time on this arc, it goes around it clockwise. In this way, one obtains a continuous path from $x$ to $x'$. If one reparametrizes this path 
by its ``size'' seen from $x'$ (and therefore excises from it all the parts that are in fact ``hidden'' from $x'$ at the moment at which they are drawn), one obtains exactly the branch  from $x$ to $x'$ in the SLE$_\kappa (\kappa -6)$ tree (that is an SLE$_\kappa (\kappa -6)$ from $x$ to $x'$). 

 \begin{figure}[ht!]
  \includegraphics[width=0.4\textwidth]{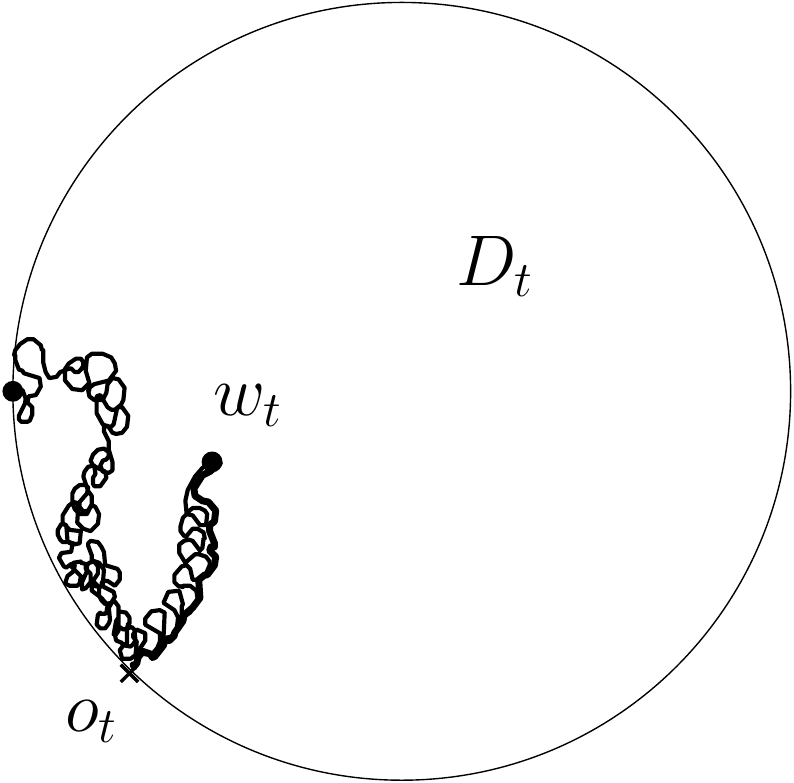}
  \caption{Sketch of the beginning of the SLE$_\kappa (\kappa -6)$ arc of the branching tree (for non-simple  CLEs) from $-1$ to $1$ in the unit disk.  The bold simple 
  arc from $o_t$ to $w_t$ denotes the ``wired'' portion of $D_t$.}
  \label{Pic21b}
\end{figure}

The branching tree description allows {one} to define immediately what one can refer to as $\CLE_\kappa$ with one wired boundary arc. 
Suppose that one grows an $\SLE_\kappa (\kappa -6)$ process $\gamma$ in a simply connected domain $D$, starting from the boundary point $x$ and 
targeting some other boundary point $x'$, and that one stops this branch of the branching SLE tree at a (deterministic or stopping) time $t$. 
 {We will say in the sequel that 
the exploration at time $t$ is {\em in the middle of tracing a loop} if at time $t$, it has started to draw a portion CLE loop that is visible from $x'$ (see Figure~\ref{Pic21b}). In this $\kappa \in (4,8)$ case, this just means that $\gamma (t) \notin \partial$. 
Suppose that $t$ is such a time.} We will write $w_t = \gamma (t)$ and $o_t$ will denote the point of the CLE loop that $w_t$ is part of that $\gamma$ did hit first
(so in this $\kappa \in (4,8)$ case, it is the last point on $\partial$ that $\gamma$ has visited before $t$).  
We can then ask what is the conditional distribution of the $\CLE_\kappa$ in the component $D_t$ of $D \setminus \gamma [0,t]$ which has $x'$ on its boundary. 
If we map  $D_t$ conformally onto the upper half-plane via the conformal transformation $g_t$ 
with $\{ g_t (o_t), g_t (w_t) \} = \{ 0, 1 \}$ and $g_t (x') = \infty $ (which one of $o_t$ or $w_t$ is mapped onto $0$ depends on whether the loop containing $w_t$  is being traced counterclockwise or clockwise), then 
the SLE$_\kappa (\kappa -6)$ features and the properties of the conformal loop ensembles immediately show that the image under $g_t$ of this conditional distribution can 
be described as follows (this works for all $\kappa  \in (4, 8)$): 
\begin{itemize}
\item First finish the currently traced loop by sampling an ordinary $\SLE_\kappa$ in $\HH$ from $1$ to $0$.
\item Then sample independent CLEs in the connected components of the complement of the SLE that are ``outside'' of the loop obtained by concatenating the SLE with the segment $[0,1]$. 
\end{itemize}
This is what we will call the law of 
CLE in $\HH$  with one wired boundary arc on $[0,1]$.
By conformal invariance, this can then also be defined for any simply connected domain $D$ with a given boundary arc.

 We can now move to the definition of CLE with two wired boundary arcs: If we are in the same setup as above, we can trace the branch of the
SLE$_\kappa (\kappa -6)$ tree that joins $x$ with $x'$ by starting at $x$, or by starting at $x'$ (so, we can consider both the SLE$_\kappa (\kappa -6)$ from 
$x$ to $x'$ and its time-reversal; note that this time-reversal will ``move clockwise'' along $\partial$ and collect the CLE loops along the way, so that there is 
some chiral change when one takes the time-reversal).

\begin{figure}[ht!]
  \includegraphics[width=0.4\textwidth]{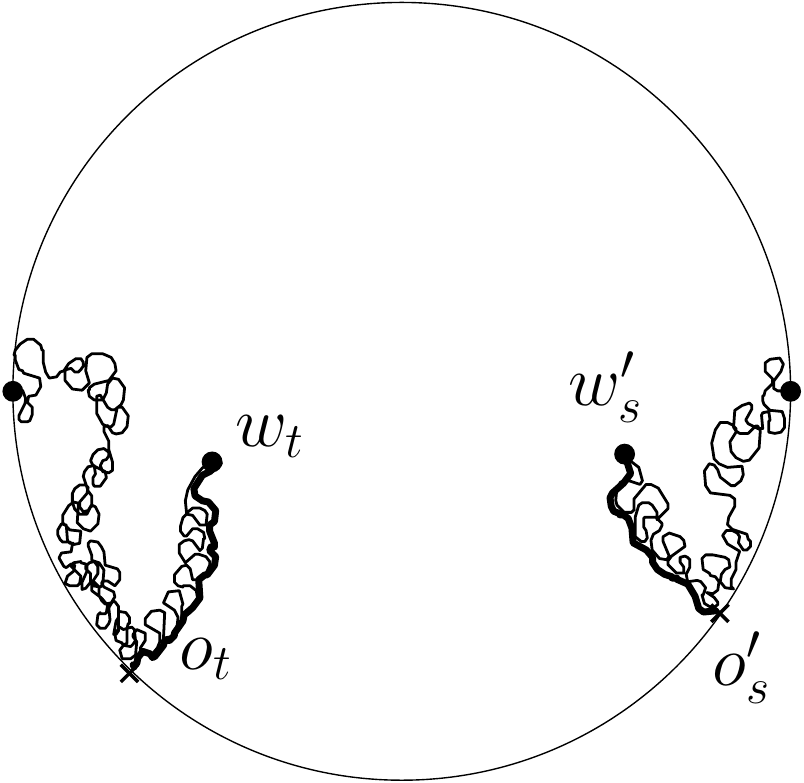}
  \caption{Exploring CLEs from both sides and creating the CLEs with two wired boundaries.}
  \label{pic23b}
\end{figure}

We can stop both the forward path and its time-reversal at some stopping times during which they are both tracing loops, 
and in such a way that the two paths have not yet hooked up (which when the union of the two paths form the entire SLE$_\kappa (\kappa -6)$); 
we allow them to hit each other though -- this can happen before they hook-up because the CLE loops 
are not all disjoint. 
More precisely, we let $t$ be a stopping time defined for the natural filtration
of the first exploration, and then we let $s$ be a stopping time for the filtration generated by the second exploration, possibly augmented by
the knowledge of the first exploration until $t$.  
We denote the remaining to be explored domain by $D_{t,s}$ and the four marked boundary points corresponding to the special points of each of the explorations 
by $(w_t, o_t, o_s', w_s')$  with obvious notation (see Figure~\ref{pic23b}).

Then,  \cite[Lemma~3.1]{mswrandomness} states that the conditional distribution of the part of the CLE that lies in $D_{t,s}$ is conformally invariant 
with respect to the configuration $(D_{t,s}, w_t, o_t, o_s', w_s')$. Note that this is now a configuration consisting of loops in $D_{t,s}$ and of two disjoints arcs (that correspond to the missing parts of the loops the $w_t$ and $o_s'$ are part of) in $D_{t,s}$. 
Let us insist on the fact that this conformal invariance statement is not trivial to prove, even if it may at first glance seem intuitively obvious
(one should keep in mind that the definitions of CLE themselves are not straightforward at all). 
This conditional distribution is what we will refer to as the $\CLE_\kappa$ in $D_{t,s}$ with the wired boundary arcs $w_to_t$ and $o_s'w_s'$.

\subsection {CLE background in the case $\kappa \in (8/3, 4)$}  

When $\kappa \in (8/3, 4)$, the construction of the CLE$_\kappa$ with two wired boundary arcs is a little more complicated due to 
the fact that the SLE$_\kappa (\kappa -6)$ path is not a deterministic function of the CLE$_\kappa$. 
In the present subsection, we recall (in a rather narrative way) some 
relevant background on SLE$_\kappa (\kappa -6)$ exploration trees in this case (we again refer to \cite{SheffieldWerner,WernerWu,MSWCLEpercolation} for 
references and details). 
We will also discuss some features involving Brownian loop-soups that will be useful later in the paper. 

When $\kappa \in (8/3, 4)$, one can still define the CLE$_\kappa$ via an SLE$_\kappa (\kappa -6)$ branching tree rooted at some boundary point $x$, but there are several points that we would like to emphasize: 

- In order to define the SLE$_\kappa (\kappa -6)$ processes, one has to use L\'evy compensation and/or side-swapping. In other words, one has to choose 
 a side-swapping parameter $\beta \in [-1,1]$ (that will eventually describe the probability for each individual CLE loop to be traced clockwise or counterclockwise
 by the exploration tree). 
 So, for each $\kappa \in (8/3, 4)$, one has a one-parameter family of SLE$_\kappa^\beta (\kappa -6)$ processes. 
 In the sequel, we will work only with the totally asymmetric cases $\beta=1$ and $\beta=-1$ (in the former case, the loops are all traced counterclockwise and in the 
 latter case, they are all traced clockwise).  

- The target-invariance of these processes enable us, for each choice of the root, to define the SLE$_\kappa^\beta (\kappa -6)$ branching tree, and the collection of 
loops that it traces. In order to prove that the law of this collection of loops does not depend on the choice of the root, 
 the proof in \cite{SheffieldWerner} uses the Brownian loop-soup in $D$ introduced in \cite{lw}, which is a natural 
 Poisson point process of Brownian loops in $D$, with intensity given by a constant $c$ times a certain natural measure on Brownian loops.
 
 Loop-soups will be useful in the present paper too, so we give a few more details about those: 
 The loops in a loop-soup can be thought of as being independent, so that they can overlap and therefore create clusters of Brownian loops. The fact that the outer boundaries of loop-soup clusters would give 
 rise to random collections of SLE-type loops that behave nicely under perturbations of the domains that they are defined in had been outlined in \cite{Wcras}. 
 As it turns out, it has then been proved in  \cite{SheffieldWerner} that: (a) for all $c \le 1$, if one considers the outermost 
 boundaries of these Brownian loop-soup clusters, one obtains a countable conformally invariant collection of mutually disjoint simple loops and (b) that for all $x$, this collection of loops 
 coincides with the $\CLE_\kappa$ defined by the branching tree construction  {where $\kappa \in (8/3, 4)$ is given by the relation $c = (6 -\kappa ) (3\kappa - 8) / (2\kappa)$
 (this intensity $c$ is exactly the central charge appearing in CFT).} Since the loop-soup construction of $\CLE_\kappa$ does not involve any boundary root, this
 therefore proves that the law of the  $\CLE_\kappa$ defined by the branching tree construction is indeed independent of the root of the tree. 
 The fact that one has these two different descriptions of the same $\CLE_\kappa$  (via the Brownian loop-soup or via branching $\SLE_\kappa (\kappa -6)$ processes) is useful  when one tries to derive further properties of {the} conformal loop ensembles, and in the present paper, we will in fact use both these constructions in our proofs. 
   
The branching tree description allows {one} to define (just as in the case $\kappa \in (4,8)$) 
the $\CLE_\kappa$ with one wired boundary arc. See Figure~\ref{Pic21}.

 \begin{figure}[ht!]
  \includegraphics[width=0.4\textwidth]{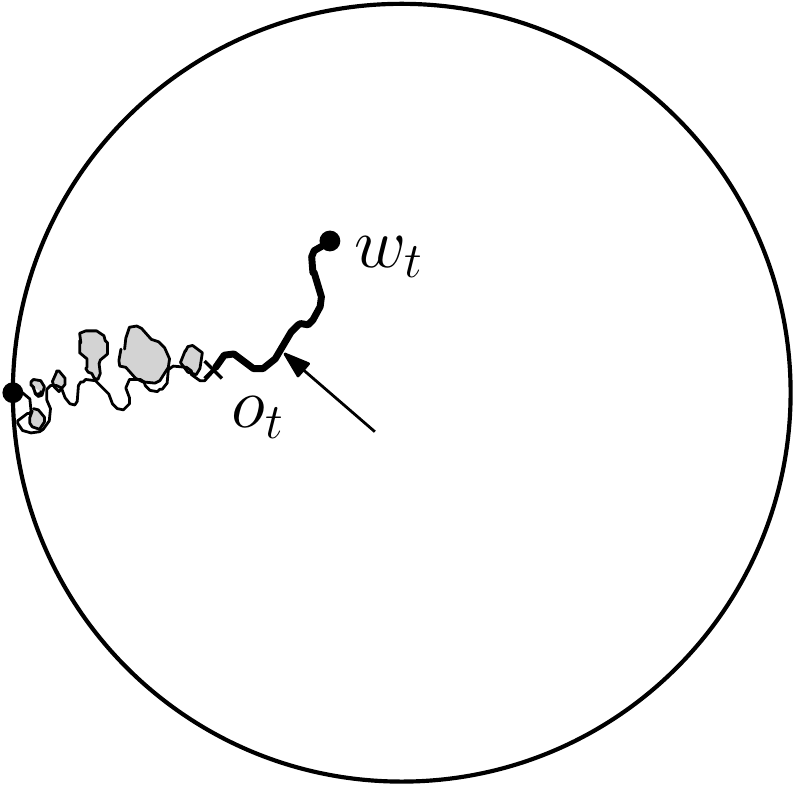}
  \caption{Sketch of an SLE$_\kappa^{-1} (\kappa -6)$ from $-1$ to $1$ in the unit disk stopped while tracing a loop. The bold arc denotes the ``wired'' portion of $D_t$. The arrow indicates the ``inside'' part of the partially traced simple loop which corresponds to this wired part. (Note that the trunk of this SLE$_\kappa^{-1} (\kappa -6)$ is in fact a boundary-touching non-simple curve, but it is drawn here as a simple curve to simplify the understanding of this sketch).}
  \label{Pic21}
\end{figure}

One can view the $\SLE_\kappa (\kappa -6)$ processes as being constructed via a Poisson point process of ``SLE$_\kappa$ bubbles''  with intensity given by the so-called $\CLE_\kappa$ bubble measure (or more precisely with intensity given by this measure times the Lebesgue measure on $[0,\infty)$, so that these bubbles are ordered according to their arrival time $u_j$), see \cite{WernerWu,MSWCLEpercolation} for background. 
The bubble measure in the upper half-plane is the appropriately rescaled limit as $\eps \to 0$ of the law of 
$\SLE_\kappa$ from $\eps$ to $0$ in the upper half-plane. The pinned configuration is then obtained from the bubble configurations by adding in an independent $\CLE_\kappa$ 
in the outside of the bubble. So, the measure on pinned configuration can be viewed as the appropriately renormalized limit as $\eps \to 0$ of the law of $\eps$ times the wired $\CLE_\kappa$. 

If one combines this description of the SLE$_\kappa^\beta (\kappa - 6)$ processes with the Markovian property of CLE described in \cite{SheffieldWerner}, one can readily obtain the following result: 
Consider two simply connected domains $D$ and $D'$ with $D' \subset D$, a boundary point $x$ of $D$ that is at positive distance from $D \setminus D'$, and another boundary 
point (or prime end) $x'$ on $\partial D \cap \partial D'$. Let us first sample an SLE$_\kappa^\beta (\kappa -6)$ (that we call $\eta$) from $x$ to $x'$ in $D$, that is coupled with a CLE$_\kappa$ (that we denote by ${\mathcal C}$) in 
$D$. Then, just as for the CLE spatial Markovian property described in \cite{SheffieldWerner}, we consider the domain obtained by removing from $D'$ all the CLE loops (and 
their interior) that do not entirely stay in $D'$, and we consider $\tilde D$ to be the connected component of the obtained set that has $x$ (and also $x'$) on its boundary. 
The CLE spatial Markovian property, states that the conditional law (given $\tilde D$) of the set $\tilde {\mathcal C}$ of CLE loops in ${\mathcal C}$ that do stay entirely in $\tilde D$, is exactly that of a CLE in $\tilde D$. One can then wonder if $\eta$ can be viewed as an SLE$_\kappa^\beta (\kappa -6)$ process in $\tilde D$ as well. As it turns out, if $\tau$ denotes the time at which $\eta$ hits $D \setminus \tilde D$, the conditional law of $(\eta_t, t \le \tau)$ is indeed 
that of an SLE$_\kappa^\beta (\kappa -6)$ process from $x$ to $x'$ in $\tilde D$ coupled to $\tilde {\mathcal C}$, and stopped at its first hitting of $D \setminus \tilde D$. 
(We leave the details of the proof to the interested reader). This is illustrated in Figure \ref {explorestriction} and 
will be useful later in the paper (in the proof of Lemma \ref {main2}).

\begin{figure}[ht!]
  \includegraphics[width=0.4\textwidth]{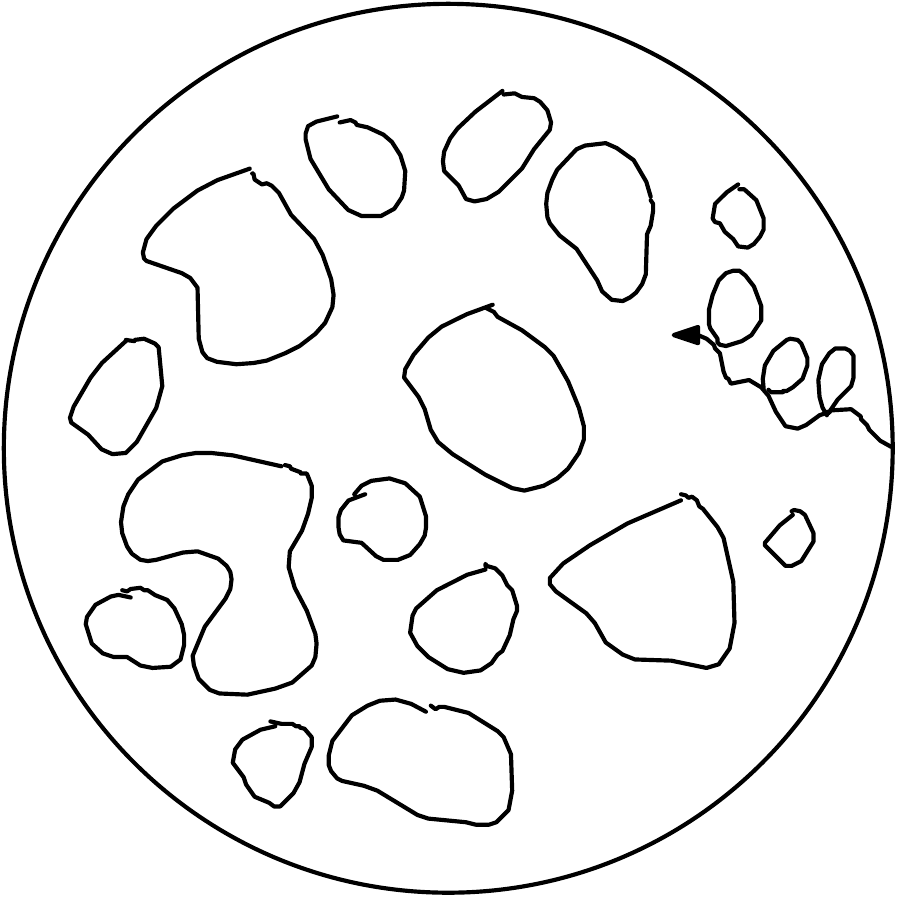} \quad  \includegraphics[width=0.4\textwidth]{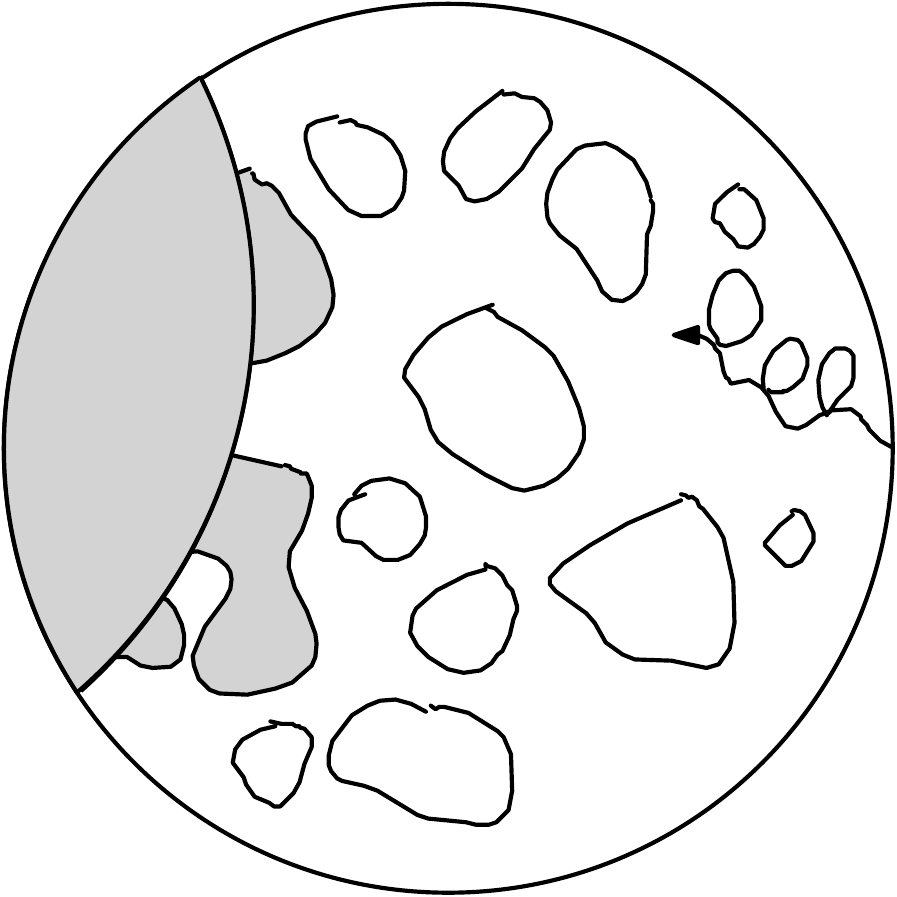}
  \caption{The beginning of the SLE$_\kappa^{1} (\kappa -6)$ in exploring the CLE in $D$ (left figure) 
  is also that of an SLE$_\kappa^1 (\kappa -6)$ exploring the CLE loops in $\tilde D$ (right figure), where $\tilde D$ has been obtained by restriction.}
  \label{explorestriction}
\end{figure}

As explained for instance in \cite{SheffieldWerner} or \cite{WernerWu}, one can also use procedures {other} than these SLE$_\kappa^\beta (\kappa -6)$ exploration tree
to discover the loops of a $\CLE_\kappa$ in a ``Markovian'' way. 
This includes for instance deterministic explorations such as discovering one after the other and in their order of appearance, all the CLE loops that intersect a given deterministic curve that starts on the boundary; for {example} in the unit disk, start 
from $1$, and trace one after the other all loops that intersect the segment $[-1,1]$ starting from $1$, until one hits the imaginary axis.
This last procedure will sometimes jump along the real axis, but the CLE property will ensure that 
the previously defined wired CLE describes also the conditional law of the CLE when one stops the exploration in the middle of a loop. 
Such an exploration can be quite useful for $\kappa \in (8/3,4)$ because 
it is a deterministic function of the CLE (which is actually not the case for the branching tree exploration when $\kappa \in (8/3, 4)$, see \cite{mswrandomness}).

Let us finally review some results about the decomposition of Brownian loop-soup clusters from \cite{QianWerner, Qian2} when they 
are partially discovered by an SLE$_\kappa^\beta (\kappa -6)$ exploration:  
Consider a $\CLE_\kappa$ ${\mathcal C}$ for $\kappa \in (8/3, 4]$ in the unit disk, that was obtained from a Brownian loop-soup ${\mathcal L}$ with intensity $c$ (as explained in \cite{SheffieldWerner}),
and start exploring from $x$  the branch of the SLE$_\kappa^\beta (\kappa -6)$ tree coupled to this CLE (in such a way that the tree and the loop-soup are conditionally independent given the CLE) from the boundary point $-1$ targeting $1$, and stop it at some stopping time $t$, when one has partially but not fully traced a CLE loop. 
Then, we have just explained in the previous paragraphs that in the remaining to be discovered domain $D_t$, the conditional distribution of the CLE is that of a wired CLE (wired on the 
already traced part $\partial_t$ of the loop that one is tracing at time $t$). 
As explained in \cite{Qian2}, one can then also derive the following aspects of the conditional 
distribution of the loop-soup itself in $D_t$ (see Figure~\ref{onerestr} for a sketch):
\begin {itemize}
 \item The outer boundary of the union of all the Brownian loops that do intersect $\partial_t$ form a one-sided restriction measure with exponent $\alpha = (6-\kappa) / (2\kappa)$ attached to $\partial_t$ (see \cite{LSWr,Wconfrest} for background and definitions about restriction measures).
 \item The set of Brownian loops in~$D_t$ that do not intersect $\partial_t$ form an independent loop-soup in~$D_t$.
\end {itemize}
This result is stated in \cite{Qian2} for the case of the deterministic Markovian explorations (that discover the loops that intersect a given deterministic path), 
but it is easy to apply the results of Section 5.2 of \cite{Qian2} about partially explored ``pinned CLE'' configurations to deduce the former statement about $\SLE_\kappa^\beta (\kappa -6)$ explorations. 
Indeed, recall that we know on the one side from the aforementioned results from \cite{SheffieldWerner, WernerWu} that $\SLE_\kappa^\beta (\kappa -6)$ processes can be built from a Poisson point process of $\SLE_\kappa$ bubbles, and we know on the other side from \cite{QianWerner} that the conditional distribution of the Brownian loops given the CLE that they generate is composed of independent samples inside each CLE loop. This shows that in order to obtain the previous decomposition result, it is enough to derive the corresponding result for partially explored bubbles; these are exactly the statements in Section 5.2 of \cite{Qian2}. 

\begin{figure}[ht!]
  \includegraphics[width=0.4\textwidth]{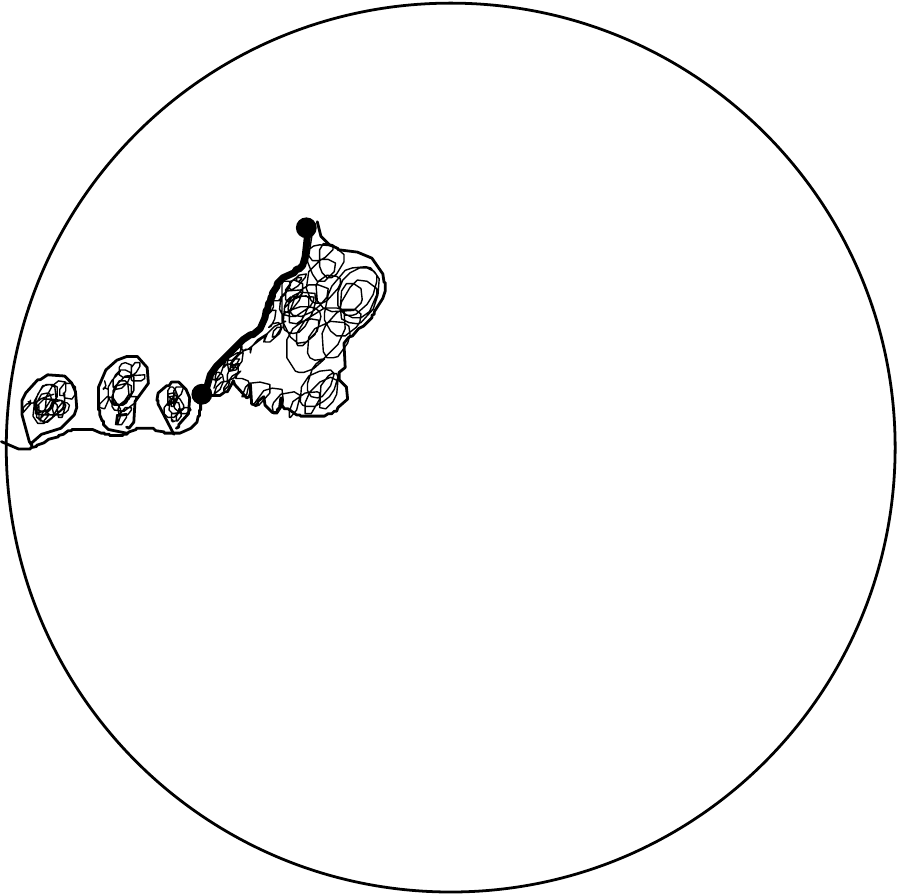}
   \quad
   \includegraphics[width=0.4\textwidth]{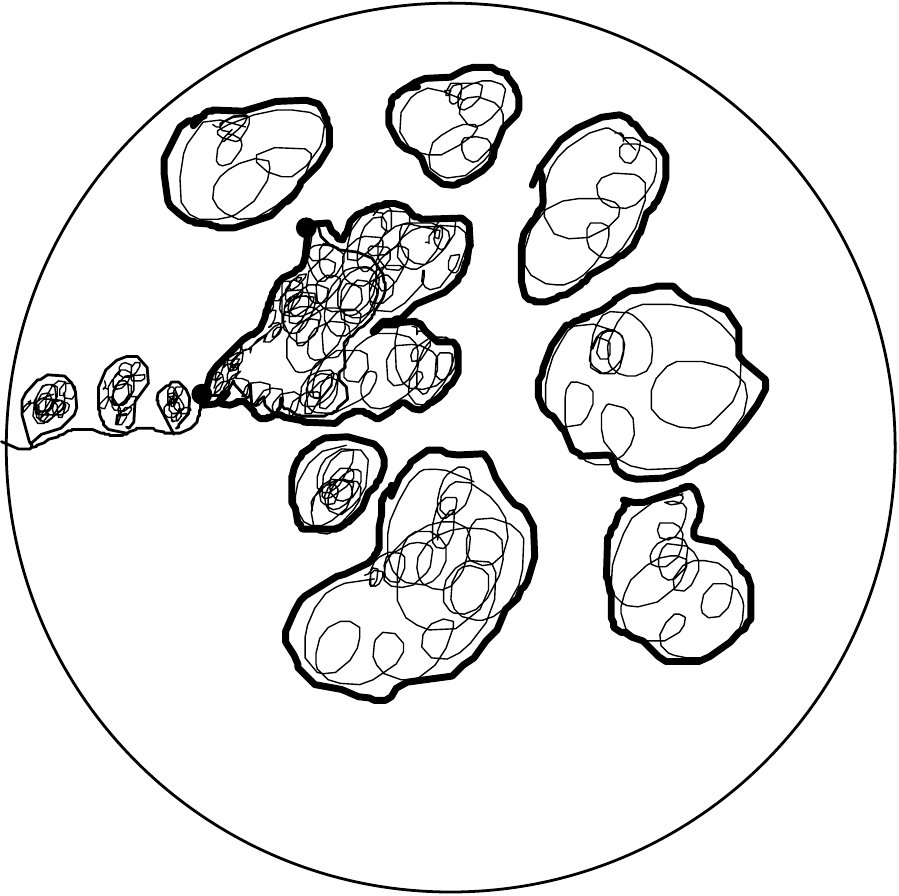}
  \caption{An exploration of the CLE with the encountered Brownian loops (left). Adding the remaining Brownian loops completes the CLE (right).}
  \label{onerestr}
\end{figure}

\subsection {CLE with two ``wired'' boundary arcs for $\kappa \in (8/3, 4)$} 
\label {ss2ba}

When $\kappa \in (8/3, 4)$, the exploration tree is not a deterministic function of the CLE (see \cite{mswrandomness}) that it constructs, 
 so that it is less clear how to properly define the joint law of two explorations of the same CLE starting from different two points $x$ and $x'$, which is what 
 we want to do in order to define CLE with two wired arcs. 
One natural option would be to consider two explorations that are conditionally independent given the CLE, and to try control how they are correlated.
We will instead (though this could be shown to be an equivalent definition) build on some results from \cite{MSWCLEpercolation} about the loop-trunk decomposition of 
these SLE$_\kappa (\kappa -6)$ processes and their relations to CLE. 
The idea is now the following (here we suppose that $\kappa \in (8/3, 4)$ is fixed and we choose to work with SLE$_\kappa^{-1} (\kappa -6)$):   

- First start an SLE$_\kappa^{-1} (\kappa -6)$ exploration $\gamma$ (coupled with a CLE ${\mathcal C}$ from $x$ targeting $x'$ as before, and stop it at some stopping time $t$ (at which it is tracing a loop). The conditional law of the CLE given that branch is then given by the CLE in $D_t$ with one wired boundary arc (joining $w_t$ and $o_t$). Let us call ${\mathcal F}$ the 
$\sigma$-field generated by the SLE$_\kappa^{-1} (\kappa -6)$ up to this time. 

- Then, we can choose to complete the loop that this SLE$_\kappa^{-1} (\kappa -6)$ is tracing at time $t$. This provides some additional information that is 
not contained in ${\mathcal F}$. Then in the new and smaller remaining domain $\hat D_t \subset D_t$,
the conditional law of the CLE loops $\hat {\mathcal C}$ in that domain is that of an ordinary CLE$_\kappa$. 

- Then, in $\hat D_t$, we trace now an SLE$_\kappa^1 (\kappa -6)$ exploration path of this CLE $\hat {\mathcal C}$ starting from $x'$ (targeting $o_t$).
Note that this SLE$_\kappa^1 (\kappa -6)$  is now going in the ``opposite direction'', which is why we choose $\beta =1$ instead of $\beta=-1$ for it).
This exploration is what we choose to be our second exploration path $\gamma'$ starting from $x'$. 

Note however that the two following important points need to be stressed: 

- When we observe $\gamma$ up to time $t$ and then $\gamma'$ up to some stopping time $s$, we then look at the joint law of these two processes but have ``forgotten'' about 
the additional information that is provided in the second step. In other words, we look at the conditional law of $\gamma'$ given ${\mathcal F}$. 

- When $\gamma'$ does hit the loop that $\gamma$ was tracing at time $t$ (and this does happen almost surely at some time $\tau'$, because $\gamma'$ is targeting $o_t$), then we choose to continue $\gamma'$ by just moving along that CLE$_\kappa$ loop counterclockwise (which is the opposite orientation than $\gamma$). 

Hence, when we are looking at a configuration of $(D_{t,s}, w_t, o_t, o_s', w_s')$, we do not know for sure whether $\tau' < s $ or not. This corresponds exactly to 
the question whether $\gamma$ at time $t$ and $\gamma'$ at time $s$ are tracing the same loop of ${\mathcal C}$ or not. 

Building on the loop-trunk decomposition of \cite{MSWCLEpercolation}, it is explained in Section 3.2 of \cite{mswrandomness} (this is Lemma 3.5 in \cite{MSWCLEpercolation}, 
with the role of $\beta =1$ and $\beta = -1$ reversed, which just corresponds to looking at a symmetric image of the CLE) that the conditional 
probability of the fact that $\gamma$ at time $t$ and $\gamma'$ at time $s$ are exploring the same loop is 
a conformally invariant function of $(D_{t,s}, w_t, o_t, o_s', w_s')$. 
More generally (see Section \ref {general}), the law of the whole configuration of loops in $D_{t,s}$ is then 
a conformal invariant function of $(D_{t,s}, w_t, o_t, o_s', w_s')$.
This is what we call the CLE$_\kappa$ in $D_{t,s}$ with two wired boundary arcs.

\begin{figure}[ht!]
   \includegraphics[width=0.4\textwidth]{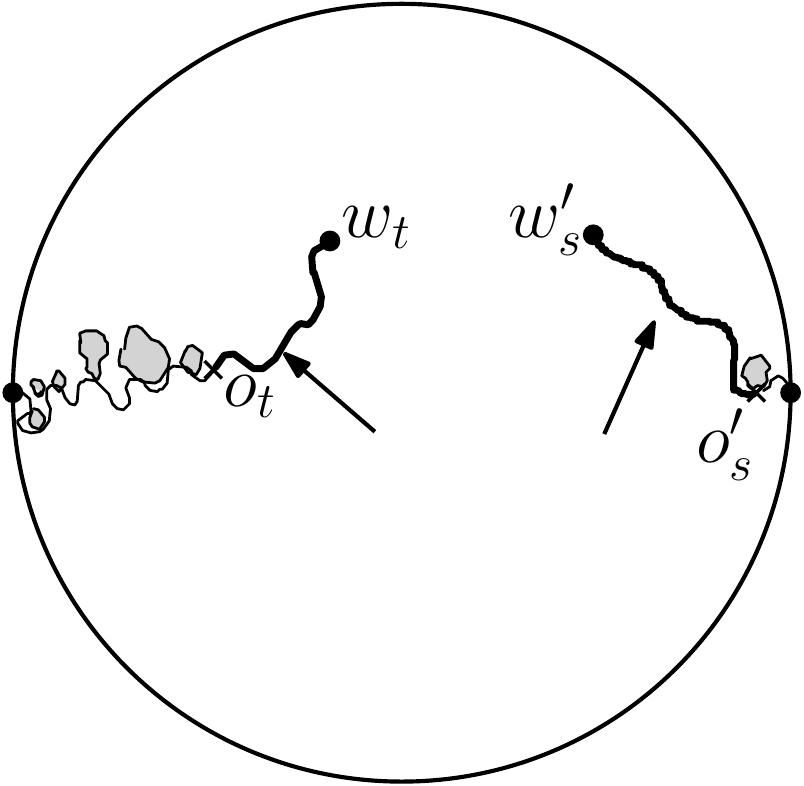}
  \caption{Exploring CLEs from both sides and creating the CLEs with two wired boundaries.}
  \label{pic23}
\end{figure}

One reason to prefer to work with the totally asymmetric explorations in the present paper is that when $\beta \not\in \{ -1, 1\}$, the information about the 
direction in which $\gamma$ and $\gamma'$ are tracing their respective loops can provide a bias about the fact that these two loops 
are the same or not (indeed, in the loop-trunk setup from \cite{mswrandomness} $\gamma$ and $\gamma'$ trace them in the same orientation, then they can not be the same loop). We will see a similar feature in our analysis of the $\kappa =4$ case.

\subsection {A general remark} 
\label {general}
 Finally, we can note that for all $\kappa \in (8/3, 8) \setminus \{ 4 \}$, if we give ourselves a positive $L$, it is possible to choose $t$ and $s$ in such a way that 
with probability one,  $D_{t,s}$ can be mapped conformally onto the rectangle $[0,L] \times [0,1]$ in such a way {that} the four marked boundary points get mapped onto the four corners of the rectangle (for instance, choose first any $t$ and then explore the second strand until the first time $s$ at which $D_{t,s}$ is conformally equivalent 
to such a rectangle -- we know that $s$ exists because the two explorations will eventually hook up). {For all given $L$ and each $\kappa$, this allows one} to define a law $P_L$ on configurations in $[0,L]\times [0,1]$ such that for any $(s,t)$ as described in the previous paragraphs, the conditional law of the CLE in $D_{t,s}$ is the conformal image of $P_L$ where $L$ is the value such that $D_{t,s}$ and the four boundary points get mapped to the four corners of the rectangle. We will call these distributions $P_L$ the CLE with wired boundary conditions on the two vertical boundaries of the rectangle, and $P_{D, a_1, a_2, a_3, a_4}$ will denote the law of the configuration in a simply connected domain with four distinct marked boundary points $a_1, \ldots, a_4$.

\begin{figure}[ht!]
  \includegraphics[width=0.4\textwidth]{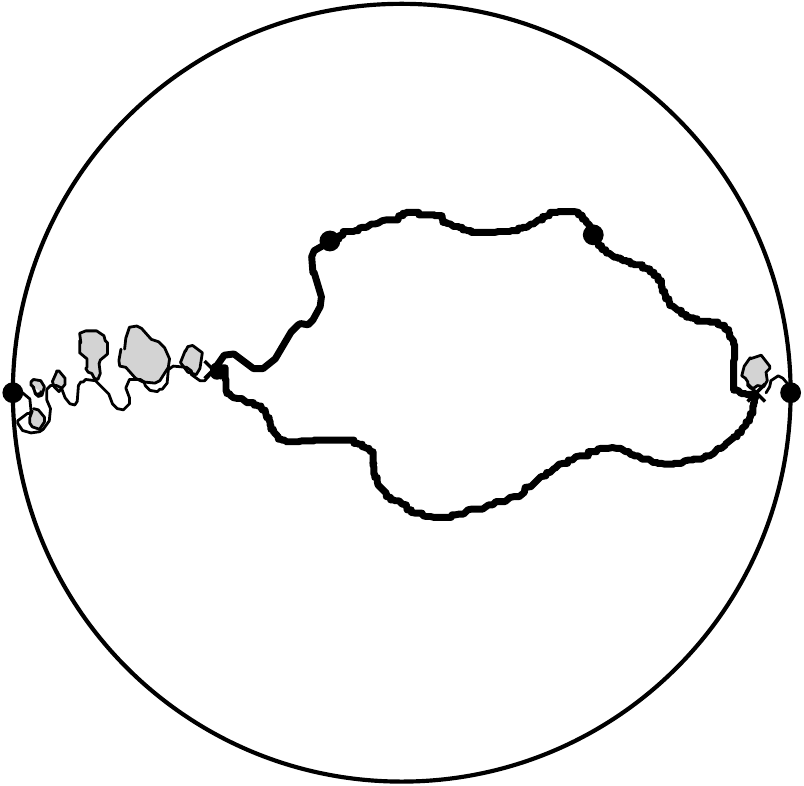}
   \quad
   \includegraphics[width=0.4\textwidth]{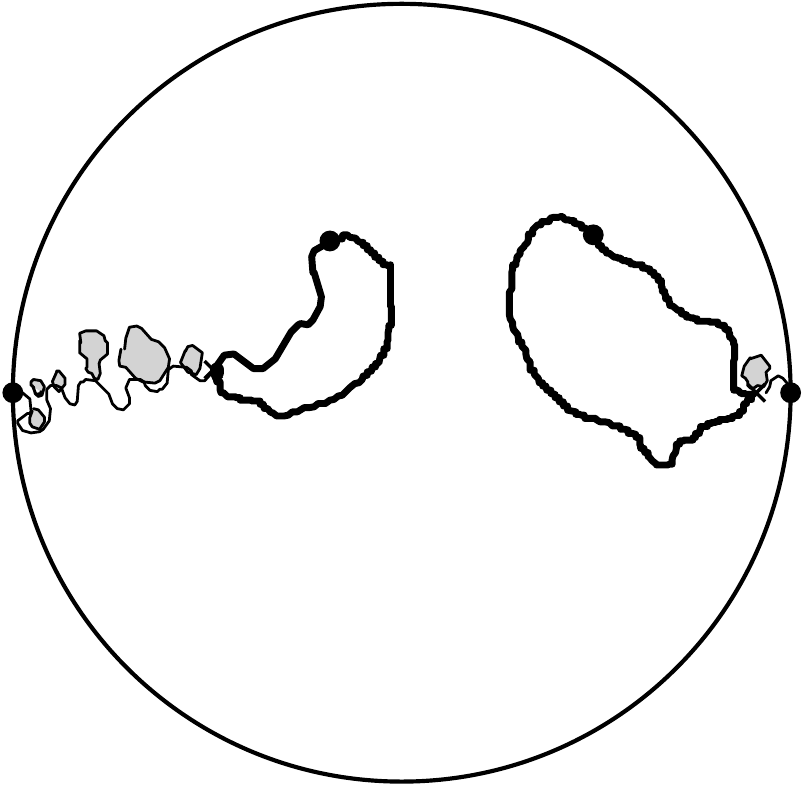}
  \caption{The two connection possibilities for the  partially explored simple CLE from Figure~\ref{pic23}.}
  \label{pic241}
\end{figure}

The law $P_L$ can be described in two steps: 
\begin{itemize}
\item One can first complete the strands that start from the four corners. This will complete the loop(s) (which turns out to be one loop or two loops, depending on how the strands
hook-up) that one had partially discovered, see Figure~\ref{pic241}. Note that we have however not (yet) described at this point how to sample them. 
\item Then, in the remaining domain (outside of the traced loops), one samples independent CLEs.
\end{itemize}

Hence, in {order} to fully describe $P_L$, it is in fact sufficient to describe the law of the strands. 
Let us already mention that Dub\'edat's commutation relation arguments (or bichordal SLE arguments)  that we will recall in the next section will do this to a certain extent.
They  for instance imply that once one knows the hook-up probabilities (i.e., the probability that the four strands hook-up in the way to create one loop, as a function of $L$, see Figure~\ref{pic241} for the two possible options), then one can deduce the joint law of the strands. The main purpose of this paper is actually to determine this hook-up probability.

\section {The special case $\kappa=4$ and the GFF} 
\label {SGFF}

The reader may have noticed that we have not yet discussed the definition of CLE$_4$ with two wired boundary arcs. 
Let us briefly do this in the present section, and show 
how Theorem~\ref{mainprop} can be derived directly and easily when $\kappa =4$, using the {relationship} between $\CLE_4$ and the GFF. 

When one defines $\CLE_4$ via an SLE$_4 (-2)$ branching tree, one necessarily has to use a symmetric side-swapping version (i.e., for $\beta=0$) -- see for example \cite{MSWCLEpercolation} and the references therein, so that the previous setup with SLE$_\kappa^{-1} (\kappa -6)$ processes 
does not apply. In the present section, we will only use the $\SLE_4^{0,0} (-2)$ explorations (i.e., with $\mu =0$ in the terminology of \cite{MSWCLEpercolation}) and refer to them simply as $\SLE_4 (-2)$ processes. 

Recall that $\SLE_4$ can be viewed as a level line of the Gaussian Free Field \cite{ss2010continuumcontour,DubedatGFF}. The corresponding coupling of 
$\CLE_4$ with the GFF  was introduced by Miller and Sheffield  (\cite{MScle}, see also \cite{ASW} for details) and can be described as follows: 
Sample one $\CLE_4$ in a simply connected domain $D$, and toss an independent fair Bernoulli coin $\epsilon_j \in \{ -1, 1 \}$ for each CLE loop $\gamma_j$. 
Then, in the domains encircled by each of these loops, sample an independent GFF $\Gamma_j$ with zero boundary conditions on $\gamma_j$ (the GFF $\Gamma_j$ is equal to $0$ 
in the outside of $\gamma_j$). Then, for {a certain} explicit choice of $\lambda >0$, the field 
\[ \Gamma := \sum_j ( 2 \eps_j \lambda + \Gamma_j ) \]
is exactly a GFF in $D$. Furthermore, the $\CLE_4$ and the labels $(\eps_j)$ are deterministic functions of the obtained field, and the side-swapping exploration of the CLE can be viewed as a deterministic function of the CLE and of the labels $(\eps_j)$ (see \cite{MSWCLEpercolation,ASW} for details).

One way to describe the joint exploration of a CLE$_4$ from two distinct starting points in such a way that it does not provide a hook-up bias due to the 
information about the orientation of the partially explored loops (when one then defines the CLE$_4$ with two wired boundary arcs) goes as follows: 
We consider that the two symmetric side-swapping $\SLE_4 (-2)$ 
explorations that are conditionally independent given the $\CLE_4$. 
This can be achieved by using two independent i.i.d.\ collections $(\eps_j)$ and $(\eps_j')$
 and to view the $\SLE_4^0 (-2)$ processes as deterministic functions of the two corresponding GFFs (see \cite{MSWCLEpercolation} and the references therein). 
 One can then stop these two explorations along the way as described above.  The discussion below will in fact show that the conditional law of the $\CLE_4$ 
 in the remaining domain $D_{t,s}$ is conformally invariant, and this is what we can then call the CLE$_4$ with two wired boundary arcs.

We can also assume that we have chosen to stop our explorations at times $t$ and $s$,  in such a way that 
$(D_{t,s}, w_t, o_t, w_s', o_s')$ is a conformal square (we can for instance do this by first stopping the first exploration at 
some deterministic time, and then stop the second one at the first time $s$ at which the configuration is a conformal square, and  to restrict ourselves to the case 
where the two explorations are disjoint). 
Let us denote by $E_1$ the event that the four strands are then hooked up so that they 
create a single $\CLE_4$ loop (of the original $\CLE_4$), and by $E_2$ the event that the four boundary strands are hooked up in the way that will create two disjoint $\CLE_4$ loops. 

Let us couple the $\CLE_4$ with a third GFF $\Gamma$ as above, by using yet another independent collection $(\bar \eps_j)$ of labels
(so, the three collections $(\bar \eps_j)$,  $(\eps_j)$ and $(\eps_j')$ are independent, conditionally on the CLE). 
On top of the partial discovery of the $\CLE_4$, we can also discover the corresponding boundary values of $\Gamma$.  
In other words, we can also discover whether on the two wired arcs, 
the GFF boundary values are $+ 2 \lambda$ or $-2 \lambda$. Let $\tilde E$ denote the event that these boundary values are the same on both arcs.

Now we can note that $\P[ \tilde E | E_1 ] =1$ and $\P[ \tilde E | E_2 ] = 1/2$ because of the rules that determine the GFF given the CLE {(i.e., the coin flips are i.i.d.\ fair Bernoulli)}. 
On the other hand, it is known \cite{MScle,ASW} that the $\CLE_4$ is a deterministic function of the GFF, so that conditioning on the joint information of the CLE and the GFF is the same as conditioning on the GFF only. 

But, on the event $\tilde E$ where the two boundary values on the partially explored strand are equal, we are looking at a GFF in a conformal square with boundary conditions $2 \lambda, 0, 2 \lambda, 0$ or $-2 \lambda, 0, -2 \lambda, 0$ on the four arcs. Hence, by symmetry, 
$$ \P[ E_1 | \tilde E ] = \P[ E_2 | \tilde E ] = 1/2.$$
This implies that  
$$ \P[ E_1] = \P[ \tilde E  \cap  E_1 ] = \P[ \tilde E ] \P[ E_1  | \tilde E ] = \P [ \tilde E ] \P[ E_2 | \tilde E ] = \P[ \tilde E \cap E_2 ] =\frac { \P[E_2] } 2 = \frac {1- \P[E_1]} 2 $$
and that the hook-up probability $\P[E_1]$ in the conformal square is indeed $1/3$ (i.e., $\theta = 2$).  

Note that in this special case, we see that the marginal distributions of the four strands are in fact ordinary $\SLE_4 ( \rho_1, \rho_2)$ processes (as opposed to the cases where $\kappa \not= 4$, where they turn out to be intermediate SLEs). This is also related to the fact that the hook-up probabilities that we will discuss in the next section take a very simple form in that case (which was already observed in the aforementioned papers by Dub\'edat or Bauer-Bernard-Kyt\"ol\"a).

\section{Consequences of Dub\'edat's commutation relations} 
\label{S2}

Let us consider again the general case $\kappa \in (8/3, 8)$, and let us  
now review what the results on commutation relations {from} \cite{Dubedat,Dubedat2,Dubedat3,Zhan1,Zhan2,Zhan3,BBK} imply for our CLEs with two wired boundary arcs and for the hook-up probability as a function of the aspect-ratio 
of the considered conformal rectangle.
It will be somewhat more handy to work in the upper half-plane instead of the rectangle 
(i.e., to first map conformally the rectangle $(0,L) \times (0,1)$ onto the upper half-plane via a Schwarz-Christoffel transformation that maps the two vertical sides onto $(- \infty, 0)$ and $(1-x, 1)$). 
Let us define $H(x)= H_\kappa (x)$ for $x \in (0,1)$ to be the probability that for a $\CLE_\kappa$ in the upper half-plane $\HH$ with two wired boundary arcs on $(-\infty, 0)$ and $(1-x, 1)$, the two wired arcs are joined in such a way that they form one single loop (in other words, the strand starting from $0$ ends at $1-x$). { In the sequel, we will 
use this cross-ratio $x$   of $(\infty, 0, 1- x, 1)$ in $\HH$ instead of the aspect ratio $L$ of the (conformal) rectangle. 
Recall that the cross-ratio $x(L)$ and $L$ are  related to $L$ by the formula 
$$ L = \frac { F(1/2, 1/2, 1 ; 1-k^2 ) } { 2 F (1/2, 1/2, 1 ; k^2)}, $$  
where $ x = (1-k)^2 / (1+k)^2$ and $F$ denotes the hypergeometric function  $_2F_1$ (we will keep this notation throughout the paper -- see Appendix~\ref{App2} for the definition and the properties of these functions 
that we will use in this paper)}. More generally, we will refer to $x$ as the cross-ratio of $(D, x_1, x_2, x_3, x_4)$ if one can map conformally this configuration 
onto $(\HH, \infty, 0, 1-x, 1)$.

{In the $\CLE_\kappa$ setup that we consider here (and in all three cases $\kappa \in (8/3, 4)$, $\kappa = 4$ and $\kappa \in (4,8)$,} for each choice of a simply connected domain {$D$} with four distinct boundary points $a_1, \ldots, a_4 {\in \partial D}$ ordered {counterclockwise},  { we have argued in the previous sections that the distributions $P_{D, a_1, \ldots, a_4}$ that we defined provide a distribution on pairs of SLE paths that join these four boundary points with the following properties: }
\begin {itemize}
 \item They are conformally invariant.  {That is,} if $\Phi$ is a conformal transformation, then the law of the image of $P_{D, a_1, \ldots, a_4}$ under $\Phi$ is 
 $P_{\Phi (D), \Phi(a_1), \ldots, \Phi (a_4)}$. In particular,  the probability that $a_1$ hooks up with $a_4$ is in fact a function $H(x)$ of the cross-ratio $x$ 
 of  $(D, a_1, a_2. a_3, a_4)$.
 \item For any given $a_i$, if one discovers the {entire} strand $\gamma_i$ that emanates from $a_i$ (and therefore its endpoint $a_j$), then the conditional distribution of the other remaining strand is just an $\SLE_\kappa$ joining the two remaining marked point in $D \setminus \gamma_i$  { (this is due to the fact that after 
 completing one strand, one is left with only one CLE exploration).} 
\end {itemize}
Hence, if one conditions $P_{D, a_1,\ldots , a_4}$ on the event that $a_1$ is connected to $a_2$ (and therefore that $a_3$ is connected to $a_4$), one gets a distribution on pairs of paths $(\gamma, \gamma')$ joining $a_1$ to $a_2$ and $a_3$ to $a_4$ respectively, such that (i) conditionally on $\gamma$, the law of $\gamma'$ is that of $\SLE_\kappa$ in $D \setminus \gamma$, and (ii) conditionally on $\gamma'$, the law of $\gamma$ is that of $\SLE_\kappa$ in $D \setminus \gamma'$. 
It is possible to see (and this has been done using several methods) that the law on pairs $(\gamma, \gamma')$ is uniquely characterized by this last property (this is the resampling 
property of bichordal SLE as studied and used in \cite{IG1,IG2,IG3,IG4}). 

This explains why $P_{D, a_1,  \ldots , a_4}$ is fully determined once one knows the hook-up probability function $H(x)$. In fact, it suffices to know the value of $H(x_0)$ 
for one single value $x_0 \in (0,1)$ in order to deduce the entire function $H$. Indeed, if one knows $P_{D, a_1, \ldots, a_4}$ for one given choice of $(D, a_1, \ldots, a_4)$, then {$H$ is determined because the} hook-up probability {evolves as a} martingale when one lets one strand evolve, we know the law of the evolution of this strand, and the boundary values at $0$ and $1$.

Considerations of this type are in fact included in some form in the papers cited above that introduce and study commutation relations for SLE paths and their consequences (note that these in fact 
actually study a {somewhat} more general class of questions -- in the present setup, we for instance already know from the construction 
that our commuting strands will eventually hook-up and create one or two loops, which is a non-trivial feature). 
Then, it follows from these arguments that $H$ is of the form 
\begin {equation}
 \label {HvsZ}
H(x) =  \frac {Z (x)} { Z (x) +    \theta Z (1-x) } 
\end {equation}
for some positive $\theta$, where 
\begin {equation} Z (x):=  x^{2 / \kappa} (1-x)^{1- 6 /\kappa} f(x) , \end {equation}
and where here and in the remainder of this paper, $f$ will denote the hypergeometric function
\begin {equation}
 f(x) := F \left( \frac 4 \kappa, 1- \frac 4 \kappa  , \frac  8 \kappa ; x \right) \end {equation}
(see for instance  \cite[Section~4]{Dubedat3} or \cite[Section~8]{BBK} about ``4-SLE''). 
Recall (see the short Appendix~\ref{App2} where we will briefly recall basics about hypergeometric functions) 
that $f(0)=1$ and note that since $ 8/\kappa - ( 4/\kappa + 1 - 4 / \kappa ) = 8/ \kappa  - 1 > 0$, this function $f$ is continuous at $1$ with
$$ f(1) = \frac { \Gamma (8/\kappa) \Gamma (8/\kappa -1 )} {\Gamma (4/\kappa) \Gamma (12/\kappa -1 )}.$$  

Another way to phrase/interpret this in the previous setup is that the above-mentioned papers describe the law of the 
bichordal SLEs, which are the 
conditional distributions of $P_{D, a_1, \ldots, a_4}$ given one hook-up event (or given the other one), but not the actual probability of 
these hook-up events. 
In other words, in order to determine the function $H$, it only remains to identity the value of $\theta$ in terms of $\kappa$. 
Note that knowing the value of $\theta$ is equivalent to knowing  the connection probability {for a} conformal square (i.e., for $x= 1/2$) (as 
$H(1/2) =  1 / (1 + \theta)$).

Note that that as $x \to 0$, 
$$ Z (x) \sim x^{2 / \kappa }  \quad\hbox{and}\quad
Z (1-x)  \sim x^{1 - 6 / \kappa } f(1). $$ 
Hence, because $1 - 6 / \kappa < 2 / \kappa$, it follows that 
$ H (x) \sim  x^{8 / \kappa - 1} / (\theta f(1)) $
as $x \to 0$. In other words,  
\begin {equation}
\label {probab0}
  \theta^{-1} = f(1)  \lim_{ x \to 0}  \left( x^{1- 8/ \kappa}  H(x) \right) .
\end {equation}
The strategy of our proof of Theorem~\ref{mainprop} i.e., of the fact that $\theta = -2 \cos (4 \pi / \kappa)$, will be to determine the right-hand side of \eqref{probab0}, which therefore also gives the value of $\theta$. In other words, we will in fact estimate precisely the asymptotics of the hook-up probability in very thin conformal rectangles. 
We have just argued that $H(x)$ decays like some constant times $x^{ 8 / \kappa-1}$ as $x \to 0$, and our goal will be to determine the value of this constant.

\section {First main steps of the proof of Theorem~\ref{mainprop}} 

\label {Smain}
 
We now describe the main steps of the proof of Theorem~\ref{mainprop} in the cases $\kappa \in (4,8)$ and $\kappa \in (8/3, 4]$  separately.
We will defer the  proofs of two more computational lemmas to the next sections, in order to highlight here the arguments that reduce the proof 
of Theorem~\ref{mainprop} to these concrete {computations involving SLE and Bessel processes.}

\subsection {Case of non-simple CLEs}

Let us start with the case of $\CLE_\kappa$ when $\kappa \in (4,8)$. 
Consider a conditioned
$\CLE_\kappa$ in the upper half-plane with only one wired boundary arc on $\R_-$. Recall that the law of this conditioned $\CLE_\kappa$ can be sampled from using the following steps:
\begin{itemize}
	\item Sample an $\SLE_\kappa$ $\gamma$ from $0$ to $\infty$ in order to complete the partially discovered loop that runs on $\R_-$, and then
	\item Sample independent $\CLE_\kappa$'s in the remaining connected components that are ``outside'' of this loop. 
\end{itemize}

\begin{figure}[ht!]
  \includegraphics[scale=0.5]{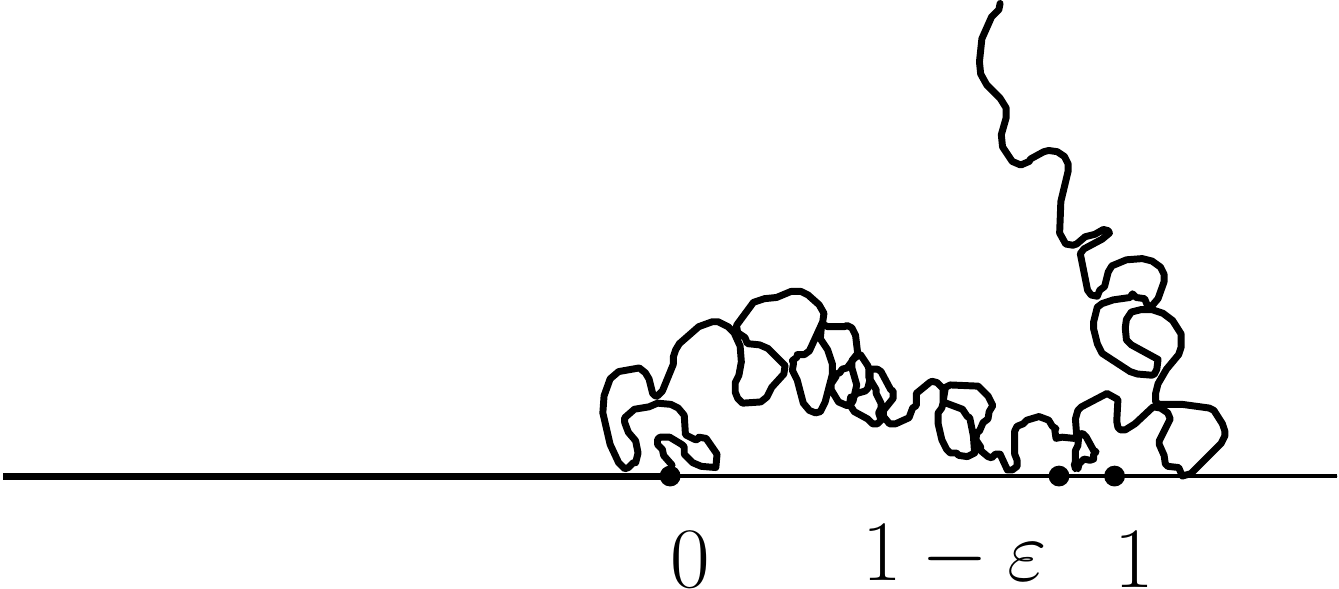}
  \caption{Sketch of the event $D(\eps)$}
  \label{pic5}
\end{figure}

We now fix $\eps > 0$ very small and define the event $D( \eps)$ that $\gamma \cap [1- \eps, 1  ] \not= \emptyset $ (see Figure~\ref{pic5}).  The probability of  $D( \eps)$ can be explicitly computed (it is in fact 
the formula that was already used by Schramm \cite{Schramm0} in his argument mentioned at the beginning of the introduction); it is
a generalization of Cardy's formula for $\SLE_\kappa$ almost identical to that  determined in \cite{LSW1} -- see for instance \cite[Lemma~6.6]{RS} or \cite[Section~3]{Wlh}):
$$
\P[ D (\eps) ] =  \frac { \int_{1 / \eps}^\infty   y^{-4 / \kappa} (1+y)^{- 4 / \kappa}  dy} { \int_{0}^\infty   y^{-4 / \kappa} (1+y)^{ -4 / \kappa} dy } .
$$
Clearly, as $\eps \to 0$,
\begin {equation}
\label {probab1}
\P[ D (\eps) ] 
\sim  \frac { \eps^{8/\kappa - 1 }} { (8/ \kappa -1 ) \int_{0}^\infty   y^{-4 / \kappa} (1+y)^{ -4 / \kappa} dy }
\sim \frac { \Gamma ( 4 / \kappa)}{  \Gamma (1- 4/ \kappa) \Gamma (8/ \kappa ) }  \eps^{8/\kappa -1 }.
\end {equation}

The idea is now to evaluate the asymptotic behavior of $\P[D ( \eps) ]$ as $\eps \to 0$ using a different two-step procedure that will involve hook-up probabilities: 
 {Let us move along the segment $[1- \eps , 1 ]$ from left to right, and each time we meet a loop of the conditioned $\CLE_\kappa$ for the first time, we trace it in 
the clockwise direction before continuing to move along the horizontal segment. This defines a continuous path $w_t$ starting from $1-\eps$ and ending at $1$ or at $0$ (if the curve $\gamma$ hits $[1-\eps, 1]$, then $w$ will trace $\gamma$ backwards). The last point $o_t$ on $[1- \eps, 1]$ that $w$ did visit before time $t$ corresponds to the beginning of the loop that is being traced at time $t$. }
We stop this path at the first time $\tau$ (if it exists) at which 
 the cross-ratio $c_\tau$ corresponding to $(\infty, 0, w_\tau , o_\tau)$ in the unbounded connected component of the complement of $w[0,\tau]$ in the upper half-plane reaches $x= \eps^{7/8}$. We call $B( \eps)$ the event that such a time exists (see Figure~\ref{picB}). 
\begin{figure}[ht!]
\null \vskip 1cm
  \includegraphics[scale=0.6]{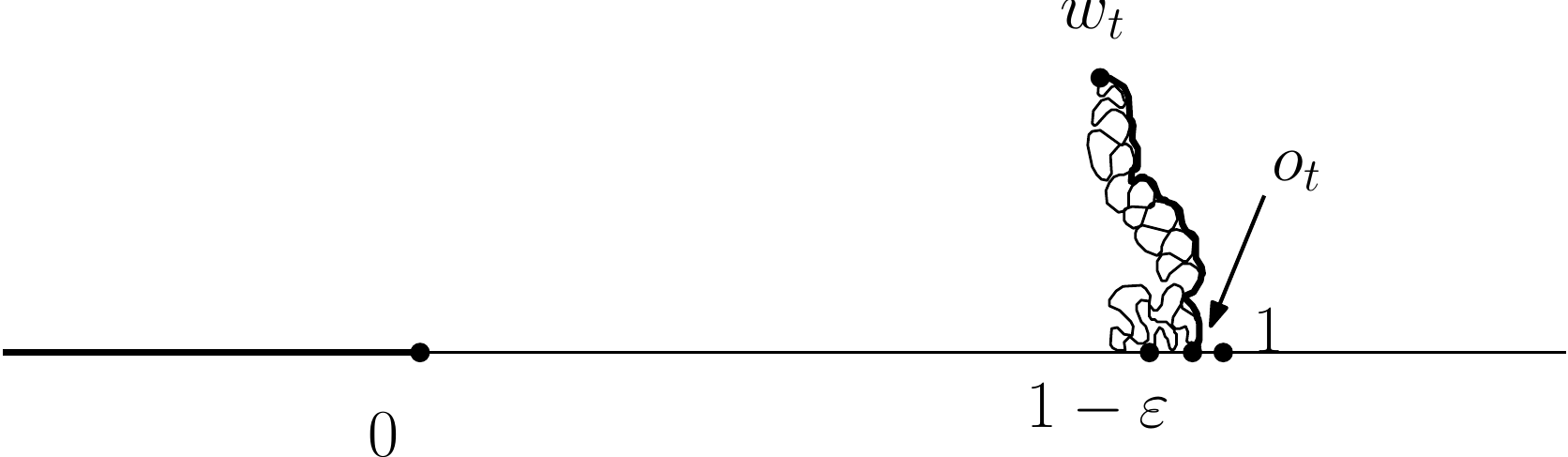}
  \caption{Sketch of the event $B(\eps)$}
  \label{picB}
\end{figure}

The following lemma will enable us to relate the asymptotic behavior of $H(x)$ as $x \to 0$ to that of $\P[B(\eps)]$:  
\begin {lemma}
 \label {main0}
One has $D ( \eps ) \subset B(\eps)$. Furthermore,  if $D'( \eps)$ denotes the event that the partially explored loop at time $\tau$ (in the definition of $B(\eps)$) does in fact correspond to a portion of $\gamma$, then the conditional probability of $D'(\eps)$ given $D(\eps)$  {tends to $1$ as $\epsilon \to 0$}.
\end {lemma}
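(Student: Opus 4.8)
\textbf{Proof plan for Lemma~\ref{main0}.}

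The plan is to compare two ways of ``seeing'' the target arc $[1-\eps,1]$: from the completed $\SLE_\kappa$ curve $\gamma$ on one side, and from the loop-collecting path $w$ on the other.

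First I would establish the inclusion $D(\eps)\subset B(\eps)$. Suppose $D(\eps)$ occurs, i.e., $\gamma\cap[1-\eps,1]\neq\emptyset$. Since $\gamma$ runs from $0$ to $\infty$ and the wired arc lies on $\R_-$, the point where $\gamma$ first hits $[1-\eps,1]$ (scanning the segment from left to right) is encountered by $w$, and from that moment on $w$ traces $\gamma$ backwards towards $0$. As $w$ follows $\gamma$ backwards, the point $w_t$ moves along $\gamma$ while $o_t$ stays at the location where $\gamma$ entered $[1-\eps,1]$; the unbounded component of the complement of $w[0,t]$ is exactly the component of $\HH\setminus\gamma$ containing $\infty$ (up to the small piece of $\gamma$ not yet retraced), with marked points $\infty$, $0$, $w_t$, $o_t$. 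The key point is that as $w$ retraces all of $\gamma$ back to $0$, the cross-ratio $c_t$ of $(\infty,0,w_t,o_t)$ varies continuously; it starts near $0$ (since $o_t,w_t$ are both within distance $\eps$ of the point $1$, hence $w_t$ is ``close'' to $0$ in the conformal sense when measured against the macroscopic configuration) but ends at a value of order $1$ when $w_t$ has returned to near $0$ and $o_t$ is near $1$. Hence by the intermediate value theorem $c_t$ must pass through the level $\eps^{7/8}$ at some time $\tau$, so $B(\eps)$ occurs. One should be careful here: what one really needs is that on $D(\eps)$ the cross-ratio $c_t$ is strictly smaller than $\eps^{7/8}$ at the first instant $w$ reaches $\gamma$ and strictly larger than $\eps^{7/8}$ when $w$ finishes retracing $\gamma$; the first inequality uses that $o_\tau,w_\tau$ both lie in $[1-\eps,1]$ near the point $1$ while $0$ and $\infty$ are macroscopically far, so the cross-ratio of the four points is of order at most $\eps^{1-o(1)}\ll\eps^{7/8}$, and the second uses that once $w=0$ the cross-ratio of $(\infty,0,0,o_\tau)$ is degenerate (equal to $0$ or $1$ depending on labelling convention) but along the way it has already exceeded $\eps^{7/8}$ because $\gamma$ reaches macroscopically far from $[1-\eps,1]$ (this is where $\eps^{7/8}\gg\eps$ is used — $\gamma$ hitting $[1-\eps,1]$ forces $\gamma$ to come back out a macroscopic distance, which lifts the cross-ratio well above $\eps^{7/8}$). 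I expect this monotonicity/continuity bookkeeping, and making precise the phrase ``cross-ratio of order $\eps$'', to be the main obstacle: one has to rule out the degenerate scenario where $\gamma$ pokes into $[1-\eps,1]$ but stays at scale $\eps$ the whole time, which would prevent $c_t$ from reaching $\eps^{7/8}$; this is handled by the explicit hitting estimate \eqref{probab1} (or rather its refinement giving the conditional law of where $\gamma$ goes after hitting $[1-\eps,1]$), showing that conditionally on hitting, $\gamma$ typically reaches distance of order $1$ with overwhelming probability, the exceptional set having probability $o(\P[D(\eps)])$.

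For the second assertion, I would argue that conditionally on $D(\eps)$, the loop being traced by $w$ at the stopping time $\tau$ is, with probability tending to $1$, the curve $\gamma$ itself (rather than some $\CLE_\kappa$ loop sampled in a complementary component). Indeed, on $D(\eps)$ the path $w$ does encounter $\gamma$; the event $D'(\eps)^c\cap D(\eps)$ would require that between first touching $[1-\eps,1]$ and the time the cross-ratio reaches $\eps^{7/8}$, the scan along $[1-\eps,1]$ has already finished a full loop of $\gamma$ (impossible, since once $w$ meets $\gamma$ it retraces $\gamma$ monotonically and $\gamma$ does not come back to $[1-\eps,1]$ a second time after leaving to macroscopic distance — or if it does, one uses the strong Markov property and the same hitting estimate to show the extra return costs a further factor $\eps^{8/\kappa-1+o(1)}$), or that $w$ met some other CLE loop first that already pushed the cross-ratio to $\eps^{7/8}$ before $w$ ever reached $\gamma$. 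The latter is the substantive case: it says that before $w$ retraces $\gamma$, it has already ``opened up'' a conformal window of size $\eps^{7/8}$ by going around ordinary CLE loops touching $[1-\eps,1]$. By conformal invariance of the wired-CLE picture, the probability that the partially explored loop at cross-ratio $x$ is the ``special'' wired strand $\gamma$ rather than a generic loop is governed by the same one-wired-arc decomposition; the complementary probability that one has instead closed off a generic loop first is $o(1)$ as $\eps\to 0$ because reaching cross-ratio $\eps^{7/8}$ purely through generic CLE loops touching a segment of conformal size $\asymp\eps$ has probability $o(\P[D(\eps)])$ — again by the explicit Cardy-type asymptotics \eqref{probab1}, since a generic loop touching $[1-\eps,1]$ and extending to conformal scale $\eps^{7/8}$ is far less likely than $\gamma$ doing so, as $\gamma$ is anchored at $0$ and ``wants'' to reach the segment. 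Thus $\P[D'(\eps)\giv D(\eps)]\to 1$.

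The main obstacle, as indicated, is the second step: disentangling the contributions of $\gamma$ and of the ordinary CLE loops to the event $B(\eps)$, and showing the latter is negligible. I would handle it by conditioning on the wired-CLE-with-one-arc decomposition, using the strong Markov property of the exploration $w$ each time it closes a loop, and invoking the hitting asymptotics \eqref{probab1} together with the (known, by conformal invariance established in Section~\ref{SCLE}) fact that hitting a fixed segment of conformal size $\eps$ by a generic outermost CLE loop is a decreasing-in-$\eps$ event with the same $\eps^{8/\kappa-1}$ scaling but a strictly smaller constant, so that the ``wrong'' scenarios contribute only $o(\eps^{8/\kappa-1})=o(\P[D(\eps)])$.
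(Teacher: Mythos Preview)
Your argument for the inclusion $D(\eps)\subset B(\eps)$ is overcomplicated and contains a misconception. You worry about the ``degenerate scenario where $\gamma$ pokes into $[1-\eps,1]$ but stays at scale $\eps$ the whole time'' and propose to handle it probabilistically, weakening the set inclusion to an almost-inclusion. But $\gamma$ is an $\SLE_\kappa$ from $0$ to $\infty$; it does not ``stay at scale $\eps$''. When $w$ traces the $\gamma$-loop, it follows an arc of this loop from a point in $[1-\eps,1]$ all the way to a point on $\R_-$ (this is deterministic once $D(\eps)$ holds). As $w_t$ approaches $\R_-$, the cross-ratio of $(\infty,0,w_t,o_t)$ in $D_t$ tends to $1$, not merely ``to a value of order $1$''. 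Hence it certainly passes $\eps^{7/8}$, and the inclusion is genuinely deterministic. No hitting estimate is needed here.

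The real gap is in your second step. Your heuristic that ``a generic loop touching $[1-\eps,1]$ and extending to conformal scale $\eps^{7/8}$ is far less likely than $\gamma$ doing so'' is false: in fact $\P[B(\eps)]$ is of order $\eps^{(1/8)(8/\kappa-1)}$ (this is exactly Lemma~\ref{main1}), which is \emph{much larger} than $\P[D(\eps)]\asymp\eps^{8/\kappa-1}$. So the event $B(\eps)$ is dominated by generic CLE loops, not by $\gamma$, and your ``strictly smaller constant'' comparison goes the wrong way. The paper's argument is entirely different and does not try to compare these events directly. Instead it uses the Markov property \emph{after} completing the loop being traced at time $\tau$: on $B(\eps)\setminus D'(\eps)$, once that (wrong) loop is closed at time $\sigma$, one is left with a fresh $\CLE_\kappa$ with one wired arc on $\R_-$, in a domain where the cross-ratio of $(o_\sigma,1,\infty,0)$ is at most $\eps$; hence the conditional probability of $D(\eps)$ is at most the unconditional $\P[D(\eps)]$. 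This yields the clean inequality
\[
\P[D(\eps)\setminus D'(\eps)] \;\le\; \P[B(\eps)]\cdot \P[D(\eps)],
\]
and then one only needs $\P[B(\eps)]\to 0$, which follows from $\P[B(\eps)]\,H(\eps^{7/8})\le \P[D(\eps)]$ together with the known asymptotics of $H$ and $\P[D(\eps)]$. Your proposal does not contain this Markov restart idea, and without it the second assertion is not proved.
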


\begin {proof}
Recall that we are working with a $\CLE_\kappa$ in the upper half-plane with wired boundary conditions on $\R_-$, which consists of an $\SLE_\kappa$ that we will denote by $\gamma$ and a family of further loops to the right of it.  When one explores the $\CLE_\kappa$ loops of such a wired $\CLE_\kappa$ that touch $[1-\eps, 1 ]$ starting from $1-\eps$, and tracing them in the clockwise direction one after the other, then in the configuration where $\gamma$ intersects this segment (i.e., when $D(\eps)$ holds), at some point, one has to trace an arc of the loop that $\gamma$ is part of, and that connects a point in $[1-\eps, 1]$ to a point that lies in $\R_-$, as depicted in Figure~\ref{Picsix1}. Just before this time, the cross-ratio $c_t$ tends to $1$ because $w_t$ approaches {$\R_-$}, which implies that it did reach ${\eps}^{7/8}$ beforehand. Hence, $D(\eps)$ is indeed a subset of $B(\eps)$. 
 \begin{figure}[ht!]
  \includegraphics[scale=0.6]{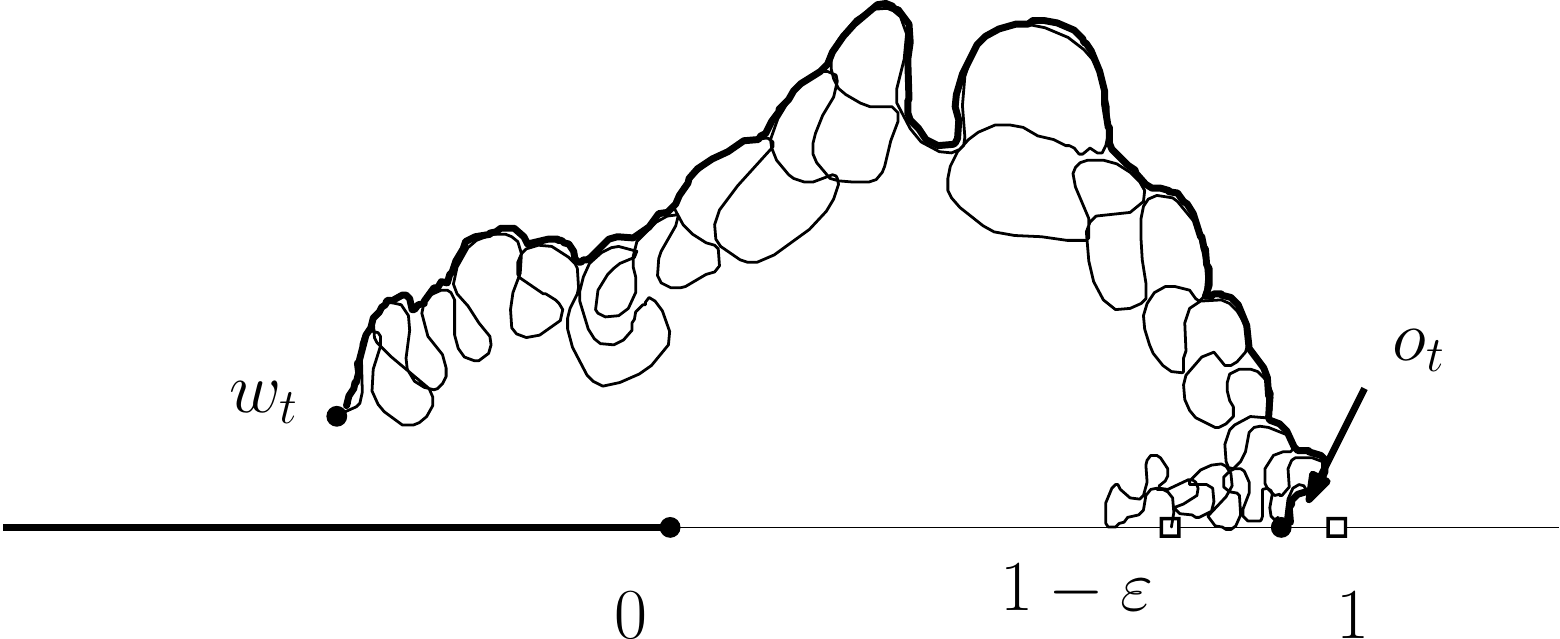}
  \caption{The two bold boundary parts get very close}
  \label{Picsix1}
\end{figure}

  Suppose now that we are in the case where $B(\eps)$ holds but not $D' ( \eps)$.
  Then, at the time $\sigma$ at which one has completed the loop that one was tracing at time $\tau$, the conditional distribution in the remaining to be explored domain will be again a $\CLE_\kappa$ with just one wired boundary arc on $\R_-$. The conditional probability that $D(\eps)$ still holds will therefore be smaller than the unconditional probability that $D(\eps)$ holds because the cross-ratio corresponding to  
  the four points $(o_\sigma , 1, \infty, 0)$ at that time is necessarily smaller than $\eps$ (see Figure~\ref{Picsix2}).
In other words,
 $$ \P[ D ( \eps) \setminus D' (\eps ) ] \le  \P[ B (\eps ) ] \times \P[ D (\eps ) ].$$ 
   \begin{figure}[ht!]
  \includegraphics[scale=0.6]{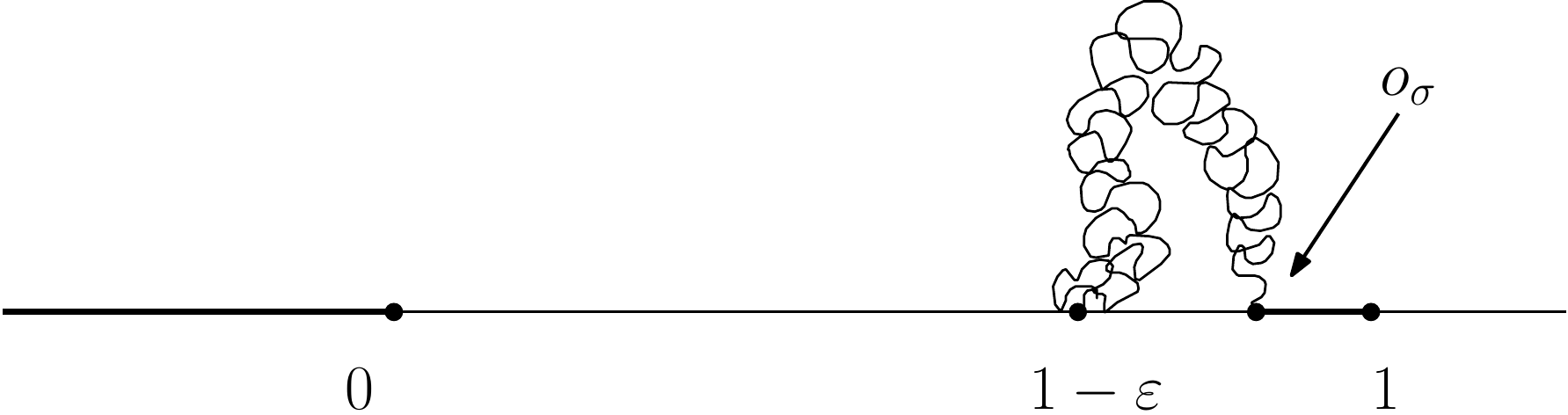}
  \caption{At such a time, the conditional probability that $D(\eps)$ holds is smaller than the unconditional probability.}
  \label{Picsix2}
\end{figure}
 It finally remains to note that $\P[B (\eps ) ] \to 0$ as $\eps \to 0$, {as a consequence of the bound} $\P[ B(\eps)] \times H(\eps^{7/8}) \le \P[ D( \eps) ]$ and our previous estimates {of} $H$ and $\P[D(\eps)]$.  Hence, the conditional probability of $D'(\eps)$ given $D(\eps)$ tends to $1$ as $\eps \to 0$, which concludes the proof.
 \end {proof}

The previous lemma implies in particular that
 \begin {equation}
  \label {probab2} 
  \frac {\P[D(\eps)]}{\P[B(\eps)]} = 
 \P[ D(\eps) | B (\eps )  ] \sim \P [D' (\eps) | B (\eps) ] = H (\eps^{7/8})
 \end {equation}
 as $\eps \to 0$. 
The proof of Theorem~\ref{mainprop} for $\kappa \in (4,8)$ will then be complete if we prove the following estimate: 
\begin {lemma}
\label {main1}
As $\eps \to 0$, 
$$ \P[ B( \eps)] \sim \frac { \Gamma ( 4 / \kappa) } { \Gamma (2- 8/\kappa) \Gamma (12/\kappa - 1)} \times (\eps^{1/8})^{8 / \kappa - 1}.$$
\end {lemma}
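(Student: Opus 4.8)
The plan is to analyze the event $B(\eps)$ directly in terms of the driving function of the boundary-touching exploration path $w_t$ and to reduce the computation of $\P[B(\eps)]$ to the tail behavior of a hitting probability for a suitable diffusion (a Bessel-type process). First I would set up coordinates: map the complement of $w[0,t]$ in $\HH$ to $\HH$ by the Loewner maps $g_t$, normalized so that $o_t$ and $w_t$ are sent to two marked real points, and track the evolution of the cross-ratio $c_t$ of $(\infty,0,w_t,o_t)$. Because the exploration is a concatenation of clockwise loop tracings of a wired $\CLE_\kappa$, the Markovian description of the wired CLE (recalled in Section~\ref{SCLE}) tells us that, up to the first hook-up, $w_t$ evolves exactly as the relevant $\SLE_\kappa(\kappa-6)$-type branch (with the appropriate force points at $0$ and at the current loop-endpoint $o_t$), and the conditional law in the to-be-explored domain is again a one-wired-arc $\CLE_\kappa$. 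This is precisely the structure that makes $c_t$ a diffusion in a coordinate in which hitting $x=\eps^{7/8}$ is a one-dimensional boundary-hitting event.

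The key steps, in order, are: (1) Write $B(\eps)$ as the event that the diffusion started from $c_0$ (determined by the initial configuration $(\infty,0,1-\eps,1)$, so $1-c_0 \asymp \eps$) reaches the level $\eps^{7/8}$ before the exploration terminates (i.e.\ before $w_t$ returns to $\R$, which is the hook-up of the currently traced loop with $\R_-$ followed by the next loop, etc.). (2) Identify the relevant one-dimensional process: after the standard change of time and variables for $\SLE_\kappa(\kappa-6)$ with a force point, the quantity controlling whether $c_t$ can decrease all the way to $\eps^{7/8}$ is (a power of) a Bessel process of the appropriate dimension, and the probability of reaching a small level $\delta$ before absorption scales like $\delta^{\alpha}$ for an explicit exponent $\alpha$. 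Here $\delta = \eps^{7/8}$ relative to the starting scale $\eps$, so the net power of $\eps$ should come out to $(8/\kappa-1)$ times $1/8$, matching the statement $(\eps^{1/8})^{8/\kappa-1}$. (3) Pin down the constant: this is where one must be careful — the constant in Lemma~\ref{main1}, $\Gamma(4/\kappa)/(\Gamma(2-8/\kappa)\Gamma(12/\kappa-1))$, should emerge from the exact hitting-probability formula for the Bessel/SLE process, which is again a hypergeometric (Cardy-type) expression; comparing its small-argument asymptotics with the Beta-function normalization gives the Gamma factors. I expect to recognize this constant as $f(1)$ times a ratio of Gamma functions, consistent with \eqref{probab0} and \eqref{probab1}, which serves as a useful consistency check.

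The main obstacle will be Step (2)–(3): justifying that the exploration $w_t$, which genuinely jumps along $\R$ each time it finishes a boundary-touching loop and finds the next one, can be replaced — for the purpose of the event ``$c_t$ reaches $\eps^{7/8}$'' — by a clean one-dimensional diffusion whose absorption/hitting probabilities are known in closed form. The jumps are harmless in the sense that each jump only decreases the cross-ratio ``budget,'' but one must rule out that a sequence of jumps conspires to make $c_t$ small without $w_t$ ever approaching $\R_-$, and one must show that the dominant contribution as $\eps\to0$ comes from a single loop being traced so that $w_t$ comes within distance $O(\eps)$ of $\R_-$ while the opposite endpoint stays at distance $\asymp 1$ — which is exactly the configuration that forces $c_t$ near $1$, i.e.\ the one already exploited in Lemma~\ref{main0}. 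Once this localization is in place, the computation is the standard $\SLE_\kappa(\rho)$ boundary-hitting estimate, and the constant follows from the hypergeometric asymptotics recalled in Appendix~\ref{App2}. I would defer the actual Bessel-process computation to Section~\ref{Sproof2}, stating here only the reduction and invoking that section for the closed-form evaluation.
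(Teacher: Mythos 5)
There is a genuine gap, and it sits exactly where you flag that ``one must be careful'': the constant cannot be extracted from a ``standard $\SLE_\kappa(\rho)$ boundary-hitting estimate,'' because the relevant one-dimensional process does not start from (and stay away from) a positive level. In the exploration defining $B(\eps)$, every time a loop attached to $[1-\eps,1]$ is completed the tip returns to the segment, i.e.\ the cross-ratio $c_t$ (equivalently, after the conformal normalization, the process $O-W$ of the $\SLE_\kappa(\kappa-6)$ branch) returns to $0$, and it starts at $0$ as well. Consequently $B(\eps)$ is not the event that a diffusion started at some interior point hits a level before absorption; it is the event that \emph{some excursion} of $O-W$ away from $0$ reaches the threshold before the swallowing time of the segment. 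Its probability is asymptotically the expected number of such excursions, and computing that requires the excursion/local-time machinery: one must determine the expected local time of $O-W$ at $0$ up to the swallowing time (Lemma~\ref{main5} in the paper), which in turn is pinned down not by a hitting-probability formula but by a normalization argument (the expansion $L(0,1)=L(h,1)+h^{8/\kappa-1}+o(h^{8/\kappa-1})$ fixing the multiplicative constant of the hypergeometric solution, via the connection formula \eqref{connection2}), and then converted into the hitting asymptotics by a Wald-type identity for the Poisson point process of excursions (Corollary~\ref{co}). Your plan contains no substitute for this step, and without it the exponent heuristic $(\eps/\eps^{7/8})^{8/\kappa-1}$ gives the power of $\eps$ but leaves the constant $\Gamma(4/\kappa)/(\Gamma(2-8/\kappa)\Gamma(12/\kappa-1))$ unidentified; note also that the closed-form computation you propose to ``defer to Section~\ref{Sproof2}'' is not there (that section treats Lemma~\ref{main3}, the simple-CLE case) -- the computation has to be carried out as part of the present proof.

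A second, smaller omission: the loops you explore along $[1-\eps,1]$ belong to the \emph{wired} $\CLE_\kappa$ (the strand $\gamma$ attached to $\R_-$ is itself one of the loops the exploration may trace), so one cannot directly quote the law of the unconditioned $\SLE_\kappa(\kappa-6)$ exploration. The paper handles this by mapping to a symmetric picture in the disk where the wired arc is a small arc of size $\asymp\sqrt\eps$, sampling the completing $\SLE_\kappa$ first, showing its diameter exceeds $\eps^{1/4}$ only with probability $O((\eps^{1/4})^{8/\kappa-1})=o((\eps^{1/8})^{8/\kappa-1})$, and then using distortion estimates to identify the cross-ratio with $\sqrt\eps\,(O_t-W_t)$ up to negligible errors (together with Lemma~\ref{lastlemma} to rule out the exploration leaving the relevant half of the disk). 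Your localization discussion gestures at the right picture (a single long excursion toward $\R_-$ dominates) but does not supply these reductions, and the claim that ``each jump only decreases the cross-ratio budget'' is not by itself enough to replace the excursion-counting argument that actually produces both the exponent and the constant.
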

Indeed, combining this lemma with (\ref {probab1}) and (\ref {probab2}) shows that as $x = {\eps}^{7/8} \to 0$,  
$$ H(x) \sim \frac {\P[ D( \eps)] }{\P[B(\eps)] } \sim \frac {\eps^{8 / \kappa -1}}{(\eps^{1/8})^{8/\kappa -1}} \times \frac {\Gamma (2- 8/ \kappa) \Gamma (12/\kappa -1) }{ \Gamma (1- 4/\kappa) \Gamma (8 / \kappa ) } \sim  \frac {\Gamma (2- 8/ \kappa) \Gamma (12/\kappa -1)  x^{8/\kappa -1 }}{ \Gamma (1- 4/\kappa) \Gamma (8 / \kappa ) } .  $$
Combining this with \eqref{probab0}, we see that: 
$$
 \theta^{-1} =  f(1) \frac {\Gamma (2- 8/ \kappa) \Gamma (12/\kappa -1) }{ \Gamma (1- 4/\kappa) \Gamma (8 / \kappa   ) } = 
  \frac {\Gamma (2- 8 / \kappa) \Gamma (8/\kappa  -1)  }  {\Gamma (1 - 4/\kappa )  \Gamma (4 / \kappa) } = 
\frac { \sin ( 4 \pi / \kappa) }{  \sin (\pi ( 8 /  \kappa -1)) } =  \frac {-1} {2 \cos (4 \pi / \kappa)} 
$$
(recalling that $\Gamma (1-z) \Gamma (z) = \pi  / \sin ( \pi z)$). 

The proof of Lemma~\ref{main1} will be presented in Section~\ref{Sproof1}. 

\subsection {Case of simple CLEs}
In the case where $\kappa \in (8/3, 4)$, we are also going to estimate the asymptotic behavior of $H(x)$ as $x \to 0$, but we need a somewhat different strategy because $\SLE_\kappa$ 
paths do not 
hit boundary intervals anymore. The similarity with the case {$\kappa \in (4,8)$} is that we will again estimate the asymptotic behavior of $H(\eps)$ as $\eps \to 0$ by estimating the asymptotic behavior of 
the probability of another event $C(\eps)$, for which we show that $\P[C(\eps)] \sim H(\eps)$. 

We  consider a $\CLE_\kappa$ in the upper half-plane, with boundary conditions that are respectively wired, free, wired and free on $\R_-$, $[0,1- \eps]$, $[ 1-\eps, 1]$ and $[1,\infty)$.
So, we have four strands starting at $\infty$, $0$, $1- \eps$ and $1$, and $H(\eps)$ is the probability that the strand starting from $0$ hooks up with the one starting from $1- \eps$. 

Let us consider an $\SLE_\kappa$ path from $0$ to $\infty$ in the upper half-plane, and an independent 
one-sided restriction measure of exponent $\alpha = (6- \kappa) / (2\kappa)$ attached to the segment  $[1- \eps, 1]$ in the upper half-plane. Let us define  $C(\eps)$ 
to be the event that the $\SLE_\kappa$ intersects this restriction sample (see Figure~\ref{picC}).

\begin{figure}[ht!]
  \includegraphics[scale=0.6]{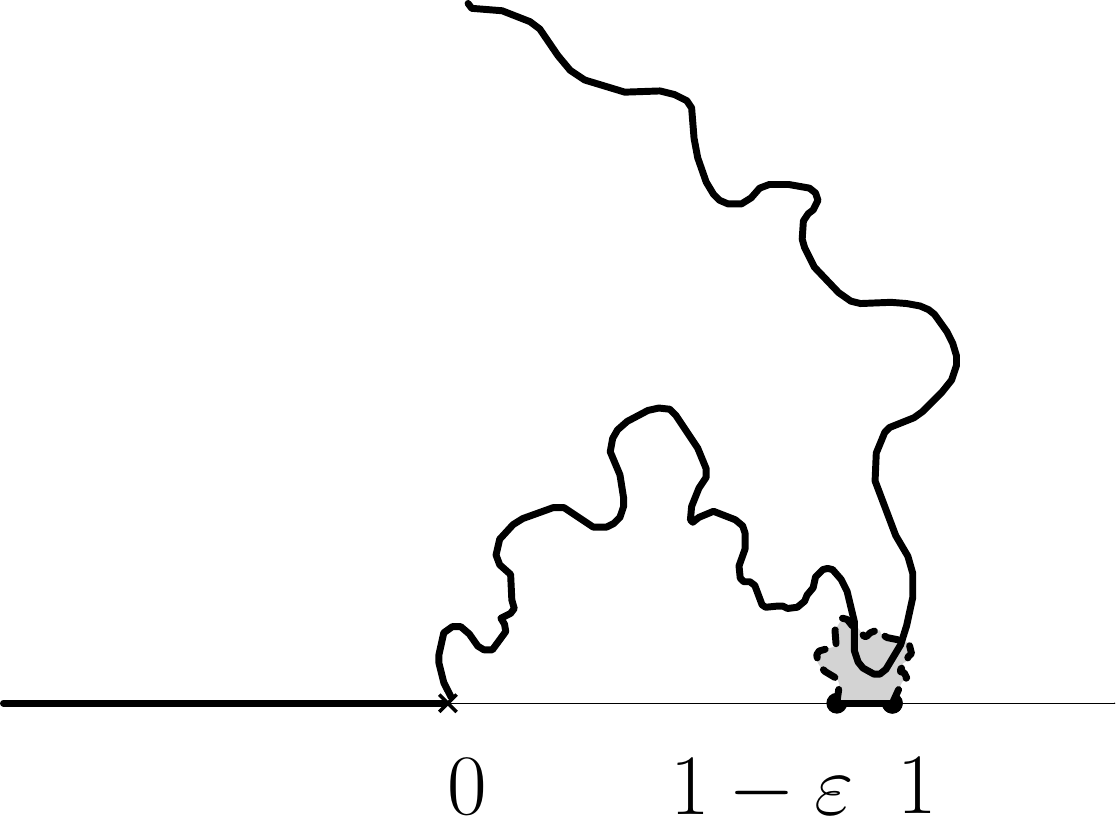}
  \caption{Sketch of the event $C(\eps)$}
  \label{picC}
\end{figure}

The first step in our proof is the following: 
\begin {lemma}
\label {main2}
{As} $\eps$ tends to $0$, $H(\eps) \sim \P[ C ( \eps) ] $. 
\end {lemma}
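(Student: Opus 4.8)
The plan is to compare the event $C(\eps)$ — that an independent $\SLE_\kappa$ from $0$ to $\infty$ meets a one-sided restriction sample of exponent $\alpha=(6-\kappa)/(2\kappa)$ attached to $[1-\eps,1]$ — with the hook-up event underlying $H(\eps)$, using the loop-soup/restriction decomposition of partially explored $\CLE_\kappa$ recalled in Section~\ref{SCLE} (the results of \cite{Qian2}). First I would set up the wired/free/wired/free $\CLE_\kappa$ in $\HH$ on $\R_-, [0,1-\eps], [1-\eps,1], [1,\infty)$ via its loop-soup description: the wired arc on $\R_-$ is the trace $\partial_t$ of a partially completed loop, and by \cite{Qian2} the union of Brownian loops touching $\partial_t$ has an outer boundary which is a one-sided restriction measure of exponent $\alpha$ attached to $\partial_t$, while the remaining loops form an independent loop-soup. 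Dually, the wired arc on $[1-\eps,1]$ is itself a partially traced loop of the $\CLE_\kappa$, whose completion — conditionally on everything to its left — is governed by the pinned/bubble measure and is the $\eps\to 0$ renormalized limit of $\eps$ times a wired $\CLE_\kappa$ at that location. The hook-up event $\{0 \leftrightarrow 1-\eps\}$ says precisely that the loop being completed from the $\R_-$ side is the same as the loop being completed from the $[1-\eps,1]$ side.

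The key step is then to recognize that, after completing the $\R_-$-side strand, one is left with a single $\CLE_\kappa$ exploration (the $0$-to-$(1-x)$ strand becomes an $\SLE_\kappa$ in the complementary domain, as used in Section~\ref{S2}), and that the wired arc on $[1-\eps,1]$ together with the restriction-measure description of the loops it attaches to turns the hook-up probability into the probability that an $\SLE_\kappa$ from $0$ to $\infty$ hits an $\alpha$-restriction sample attached to $[1-\eps,1]$. Concretely, I would use the restriction property of $\SLE_\kappa$ (exponent $\alpha = (6-\kappa)/(2\kappa)$, from \cite{LSWr,Wconfrest}) together with the $\CLE_\kappa$ bubble-measure normalization to show that the law of the ``second strand'' conditioned to hook up with the wired arc on $[1-\eps,1]$, in the $\eps\to 0$ limit, is exactly an $\SLE_\kappa$ conditioned to intersect an independent $\alpha$-restriction hull attached to $[1-\eps,1]$; the restriction exponent is the same in both pictures precisely because the Brownian-loop filling of a partially explored $\CLE_\kappa$ loop has exponent $\alpha$. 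This identification, combined with the conformal-Markov/restriction exploration principle from Figure~\ref{explorestriction} (so that the $\SLE_\kappa^\beta(\kappa-6)$ exploration of $\CLE_\kappa$ in $\HH$ restricts correctly to the subdomain), should yield $H(\eps) = \P[C(\eps)](1+o(1))$.

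I expect the main obstacle to be the rigorous matching of normalizations in the $\eps\to 0$ limit: one must argue that the contributions of loops of macroscopic size (which are present in the wired $\CLE_\kappa$ but vanish in the bubble limit) do not affect the leading asymptotics of $H(\eps)$, i.e.\ that only loops of diameter $\asymp\eps$ near $[1-\eps,1]$ contribute to order $\eps^{8/\kappa-1}$, and that the error terms are genuinely lower order. This requires a quantitative version of the convergence of $\eps$ times the wired $\CLE_\kappa$ to the pinned/bubble measure, uniform enough near the tip, and a bound showing that the event ``the first strand from $\R_-$ comes back near $[1-\eps,1]$ on a macroscopic scale'' has probability $o(\eps^{8/\kappa-1})$ — analogous to the role played by $\P[B(\eps)]\to 0$ in Lemma~\ref{main0}. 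The remaining steps (the restriction-measure identity for $\P[C(\eps)]$ and its asymptotics) are then a concrete $\SLE_\kappa$/hypergeometric computation deferred to Section~\ref{Sproof2}.
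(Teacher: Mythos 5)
Your overall strategy is indeed the paper's: realize the two-wired-arc configuration through the Brownian loop-soup, invoke the decomposition of \cite{Qian2} (the loops touching a wired arc fill in a one-sided restriction sample of exponent $\alpha=(6-\kappa)/(2\kappa)$, the remaining loops form an independent loop-soup), and identify the hook-up event with the event that the $\SLE_\kappa$ completing the loop through $\R_-$ meets an independent $\alpha$-restriction sample attached to $[1-\eps,1]$, i.e., with $C(\eps)$. The gap lies in how you treat the second wired arc and in the logic of the final identification. You propose to pass through the pinned/bubble measure, i.e., a quantitative ``$\eps$ times the wired $\CLE_\kappa$ converges to the bubble measure, uniformly near the tip'' statement, combined with ``the restriction property of $\SLE_\kappa$ with exponent $\alpha$'' (which is false as stated for $\kappa\neq 8/3$: the exponent $\alpha$ here comes from the Brownian-loop filling of the partially traced loop, not from any restriction property of $\SLE_\kappa$ itself; \cite{LSWr,Wconfrest} only furnish the definition of restriction samples). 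Moreover, your conclusion is phrased as a matching of \emph{conditional laws} (the second strand conditioned to hook up versus an $\SLE_\kappa$ conditioned to hit the restriction hull); even if established, agreement of conditional laws does not give $H(\eps)\sim\P[C(\eps)]$, which is a statement about the probabilities of the conditioning events themselves. No limiting or conditional argument is needed: the paper's proof works at fixed $\eps$, applying the \cite{Qian2} decomposition a second time, to the arc grown from the opposite seed. That second application is exactly where the exploration-restriction property of Figure~\ref{explorestriction} is required, because by the definition of Section~\ref{ss2ba} the second exploration lives in the domain left after \emph{completing} the first loop, and one must check it agrees with an exploration of the auxiliary CLE of the undiscovered loop-soup in $D_t$; you cite that figure, but only vaguely and not for this step, which is precisely where the asymmetric definition of the double exploration bites.

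Concretely, the quantitative skeleton that is missing from your proposal is the following. One stops the two explorations when they have diameter $\eta=m\sqrt\eps$ around their seeds; then with probability at least $1/2$ the stopped configuration has cross-ratio exactly $\eps$ (event $E(\eps)$), so its conditional hook-up probability is exactly $H(\eps)$, and the three-piece coupling (independent loop-soup plus the two $\alpha$-restriction samples attached to the wired arcs) reproduces the loop-soup in $D_{t,s}$ except on the event that a single Brownian loop joins the two $\eta$-neighborhoods, whose probability is $O(\eta^4)=O(\eps^2)=o(\eps^{8/\kappa-1})$. This yields the event-level identity $H(\eps)\P[E]=\P[C(\eps)]\P[E]+O(\eps^2)$, and hence the lemma. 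The error event you single out (``the first strand returns macroscopically close to $[1-\eps,1]$'') is not the one that needs controlling here, and the uniform bubble-measure convergence you would need instead is a substantial unproven ingredient that the fixed-$\eps$ coupling avoids entirely.
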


 { 
\begin {proof}
Let us consider a $\CLE_\kappa$ ${\mathcal C}$ for $\kappa \in (8/3, 4)$ 
in the unit disk (with no wired boundary arc) that has been obtained as {the} outermost outer boundaries of the clusters of 
a Brownian loop-soup ${\mathcal L}$. We will consider two $\SLE_\kappa (\kappa -6)$ explorations of this CLE as in Section~\ref{ss2ba}. 
The first exploration in an $\SLE_\kappa^{-1} (\kappa -6)$ that starts from $-1$ and targets $1$, and the second one
in an $\SLE_\kappa^1 (\kappa-6)$ that starts from $1$ and targets $-1$. 

Let us fix some large constant $m$ and let $\eta= m \sqrt {\eps}$. 
We start the first Markovian exploration near $-1$ targeting $1$ and we stop it at the first time $t$ at which the exploration reaches distance $\eta$ from $-1$. 
We can note that (by conformal invariance of the Markovian exploration and by a simple distortion estimate), provided that $m$ has been chosen large enough, 
then for all small $\eps$, 
the probability that the harmonic measure of $\partial_t$ in $D_t$ as seen from $0$ is greater than $\sqrt {\eps}$ is at least $3/4$.      

After having sampled this first exploration up to time $t$, we discover the second 
exploration from the opposite boundary point $1$. Let us define $s_1$ to be the first time at which 
this second exploration exits the $\eta$-neighborhood of $1$, $s_2$ to be that first time at which the cross-ratio of the corresponding marked points in $D_{t, s_2}$ is equal to $\eps$, and 
finally $s = \min (s_1, s_2)$. We can again note that (provided $m$ has been chosen large enough, and for all small enough $\eps$), 
with probability at least $1/2$, the cross-ratio of four marked points in  $D_{t, s_1}$ is greater than $\eps$. Hence, the probability of  $E(\eps) 
:= \{ s = s_2 \}$ is greater than $1/2$ for all small $\eps$. 
Our definitions of $\CLE_\kappa$ with two wired arcs and of the function $H$ say that given $E (\eps)$,
the conditional probability of the event $U$ that the four strands corresponding to the definition of $D_{t,s}$ hook-up to form one single loop is $H(\eps)$. 

Our goal is now to estimate this conditional probability $\P[ U | E (\eps) ]$ in another way. 
First, as we have recalled above, note that conditionally on the first exploration up to $t$, 
the conditional distribution of the Brownian loop-soup in the remaining domain $D_{t}$ (with $1$ on its boundary) 
can be decomposed as follows (see \cite{Qian2}): The Brownian loops that touch $\partial_t$ on the one hand (and the union of these loops form a restriction 
sample ${\mathcal R}_{t}$ of exponent $\alpha = (6 - \kappa )/(2 \kappa)$ attached to $\partial_{t}$ in $D_t$) and the 
other Brownian loops in $D_t$ on the other hand (that form an independent Brownian loop-soup in $D_t$ -- let us call ${\mathcal C}_t$ the corresponding CLE$_\kappa$ in 
$D_t$ obtained via the loop-soup clusters of that loop-soup). 

Finally, let us denote $\tau$ the first time after $t$ at which the first exploration completes the loop $L$ that it is tracing at time $t$, and let $D_\tau$ and ${\mathcal C}_\tau$ denote the corresponding domain and CLE. 
We can note that we are in the framework of  the CLE-exploration-restriction property explained in Figure~\ref {explorestriction}: Conditionally on $D_t$, one considers the CLE ${\mathcal C}_t$ in $D_t$, and the independent chord created by the restriction sample ${\mathcal R}_t$, that defines the subset $D_t \setminus {\mathcal R}_t$ of $D_t$. 
Then one obtains $D_\tau$ by considering the complement 
in $D_t$ of the union of ${\mathcal R}_t$ with the CLE loops it intersects (and one keeps only the connected component that has $1$ on its boundary).
The loops of ${\mathcal C}_\tau$ are then 
exactly the loops of ${\mathcal C}_t$ that stay in $D_\tau$. 
We can note that (by definition) the second exploration is defined to be an SLE$_\kappa (\kappa -6)$ exploration of $D_\tau$ (creating the loops of ${\mathcal C}_\tau$ 
until the first time at which it hits $L$ (which is part of the 
boundary of $D_\tau$). After the time at which it hits $L$, it starts tracing that loop $L$ and will at some time $\tilde \sigma$ then hits ${\mathcal R}_t$.  
Hence, the exploration-restriction property applied to $D_t$ and $D_t \setminus {\mathcal R}_t$ states that up to $\tilde \sigma$, the law of this second exploration does coincide exactly with that of an SLE$_\kappa (\kappa -6)$ in $D_t$ (associated with the CLE ${\mathcal C}_t$).

Hence, on the event where $s_1 < \tilde \sigma$, the second exploration is exactly the same as the exploration of ${\mathcal C}_t$ up to the same time. 
Here, we can note that when $\sigma \le s_1$, then necessarily, there exists a Brownian loop in
the original loop-soup in the unit disc that intersects both the $\eta$-neighborhood of $1$ and $-1$. 
The probability of this last event is easily shown to be bounded by a constant times $\eta^4$ as $\eta \to 0$ 
(because the mass of the Brownian loops that intersect both these neighborhoods behaves like $O (\eta^4)$).

Wrapping things up,
we see that we can couple the Brownian loops of ${\mathcal L}$ {in} $D_{t,s}$ with three (conditionally) independent pieces: 
\begin {itemize} 
 \item a Brownian loop-soup in $D_{t,s}$,
 \item the Brownian loops that touch $\partial_t$ and that form a restriction sample of exponent $\alpha$ attached to $\partial_t$ in $D_{t,s}$, and
 \item the Brownian loops that touch $\partial_s'$ and that form a restriction sample of exponent $\alpha$ attached to $\partial_s'$ in $D_{t,s}$
\end {itemize}
in such a way that the probability (in this coupling) that the union of these three different independent pieces does not coincide with the set of Brownian loops of ${\mathcal L}$ in $D_{t,s}$ is  $O (\eta^4)$.

But if we consider the loop-soup clusters formed by the union of these three independent pieces, the probability that $\partial_t$ and $\partial_s'$ 
are part of the boundary of the same loop-soup cluster is
equal to $\P[ C (\eps) ]$ when $s=s_2$. Indeed, the union of the first two pieces will form the $\SLE_\kappa$ and the third 
will form an independent restriction sample (see Figure~\ref{tworestr}). Hence, we can conclude that 
$$ H(\eps) \P[E]  = \P[  U \cap E ] =  \P[C (\eps) ] \P[E]   +   O(\eta^4).$$

\begin{figure}[ht!]
  \includegraphics[scale=0.7]{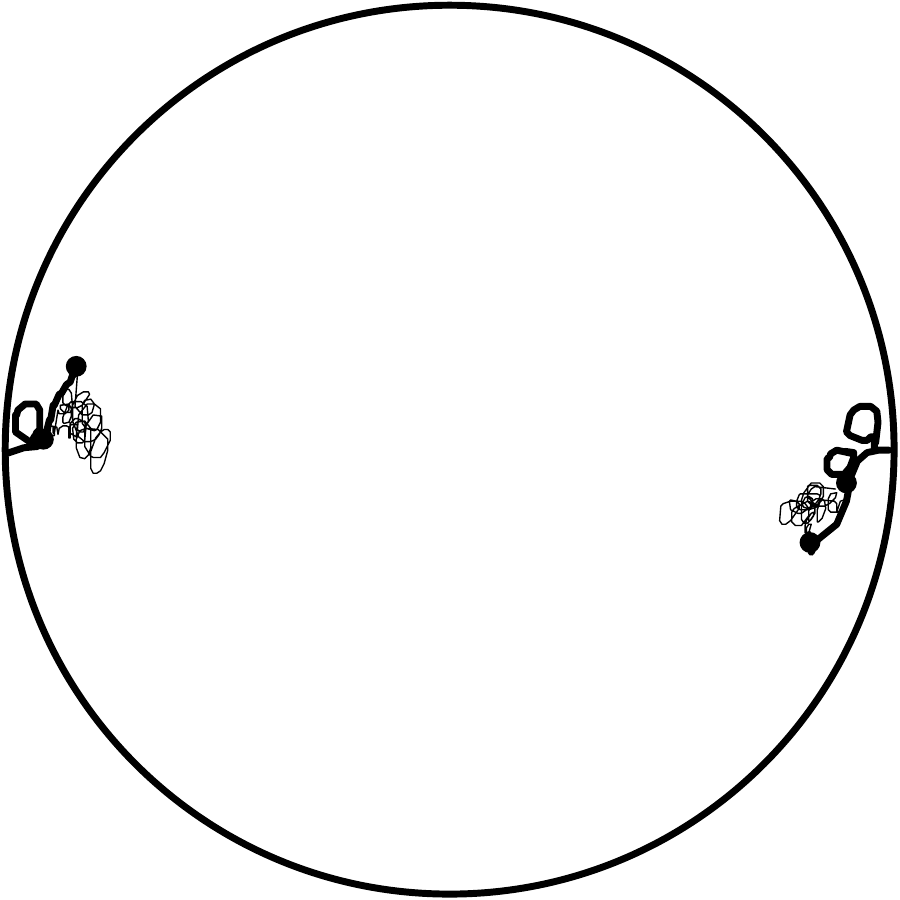}
   \hskip 1cm
   \includegraphics[scale=0.7]{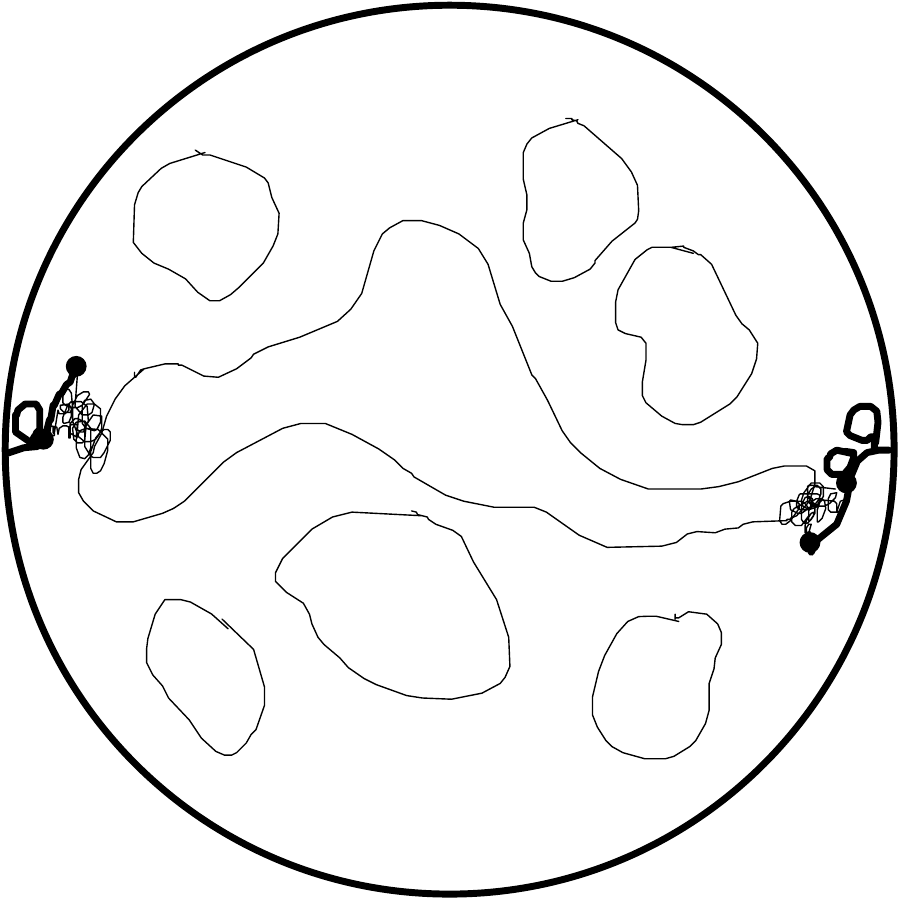}
  \caption{The two explorations with the Brownian loops that they discovered (left). The two wired boundaries are part of the same loop if a $\CLE_\kappa$ loops intersects both restriction samples (up to 
  a small probability).}
  \label{tworestr}
\end{figure}

Recall that $\P[E (\eps)] \ge 1/2$ for all small $\eps$. Furthermore, we know by (\ref {probab0}) that $H ( \eps) \eps^{ 1 - 8 / \kappa}$ tends to a positive 
constant as $\eps \to 0$. Since $\eta^4 = m^4 \eps^2 = o (\eps^{8 / \kappa -1 })$,   we can therefore finally conclude that $\P[ C ( \eps) ] \sim H (\eps) $ as $\eps \to 0$. 
\end{proof}

The proof of Theorem~\ref{mainprop} for $\kappa \in (8/3, 4)$ will then be complete if we establish the following estimate:}
\begin {lemma}
 \label {main3}
 {As} $\eps \to 0$,
$$ \P[ C(\eps) ]  \sim   \frac {\eps^{8/\kappa -1 }}{  f(1) \times (-2 \cos (4 \pi / \kappa))}.$$  
\end {lemma}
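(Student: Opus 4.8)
The plan is to compute $\P[C(\eps)]$ directly using the conformal structure of the event. Recall that $C(\eps)$ is the event that an $\SLE_\kappa$ from $0$ to $\infty$ in $\HH$ intersects an independent one-sided chordal restriction sample of exponent $\alpha=(6-\kappa)/(2\kappa)$ attached to the boundary segment $[1-\eps,1]$. The first step is to integrate out the restriction sample: by the defining property of restriction measures, for any fixed compact hull $K\subset\overline\HH$ attached to $[1-\eps,1]$ and at positive distance from $\{0,\infty\}$, the probability that a restriction sample $\mathcal R$ of exponent $\alpha$ avoids $K$ is $\Phi_K'(1-\eps)^{\alpha_1}\Phi_K'(1)^{\alpha_2}$ for the appropriate boundary exponents; applied here it means that, conditionally on the $\SLE_\kappa$ curve $\gamma$, the probability that $\mathcal R$ does \emph{not} intersect $\gamma$ equals a product of powers of derivatives of the conformal map from $\HH\setminus\gamma$ (restricted near $[1-\eps,1]$) back to $\HH$. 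Equivalently, $\P[C(\eps)] = 1 - \E[\,\psi_\gamma'(1-\eps)^{a}\psi_\gamma'(1)^{a}\,]$ where $\psi_\gamma$ is the relevant normalized conformal map and $a$ is the one-sided restriction boundary exponent associated to $\alpha$. For small $\eps$ the segment $[1-\eps,1]$ shrinks to the point $1$, so one expects $\P[C(\eps)]$ to behave like a constant times $\eps$ to a boundary-exponent power times a bulk quantity — and the exponent bookkeeping should produce $\eps^{8/\kappa-1}$ after combining the $\SLE$ and restriction exponents.

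The cleaner route, which I would actually follow, is to evolve the $\SLE_\kappa$ Loewner chain and build a martingale. Parametrize $\gamma$ by half-plane capacity with driving function $W_t=\sqrt\kappa B_t$ and Loewner maps $g_t$. Track the three marked points: the tip image $W_t$, and $A_t=g_t(1-\eps)$, $C_t=g_t(1)$ (the images of the two endpoints of the restriction-anchoring segment), together with the derivative factors $g_t'(1-\eps)$ and $g_t'(1)$. The conditional probability that the independent restriction sample of exponent $\alpha$ attached to $[A_t,C_t]$ is disjoint from the curve traced so far and from the future of the $\SLE$ is, by the restriction property applied in the slit domain, a deterministic function $M_t$ of $(W_t,A_t,C_t,g_t'(1-\eps),g_t'(1))$; and $\P[\,C(\eps)^c\,]=\E[M_0]$ where $M_t$ is a bounded martingale. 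Writing $M_t$ in terms of the cross-ratio type variable and the derivative prefactors, the martingale condition (Itô, setting the drift to zero) forces $M$ to be exactly of the form: (derivative prefactors)$\times G(\text{cross-ratio})$ where $G$ solves a hypergeometric ODE. This is the same ODE that produced $f$ and $Z$ in Section~\ref{S2}; so $G$ is expressible through $F(4/\kappa,\,\dots;\,\cdot)$, and in particular $f(1)$ will appear when one sends the cross-ratio to its endpoint, exactly as in the non-simple case where $\P[D(\eps)]$ was a hypergeometric integral. Matching the $\eps\to 0$ asymptotics of this explicit expression — using the connection formulas for ${}_2F_1$ near $1$ recalled in Appendix~\ref{App2}, and the reflection identity $\Gamma(z)\Gamma(1-z)=\pi/\sin\pi z$ — should yield $\P[C(\eps)]\sim \eps^{8/\kappa-1}/(f(1)\cdot(-2\cos(4\pi/\kappa)))$, after the same $\sin(4\pi/\kappa)/\sin(\pi(8/\kappa-1))=-1/(2\cos(4\pi/\kappa))$ simplification used at the end of the $\kappa\in(4,8)$ argument.

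Concretely the steps are: (i) express $1-\P[C(\eps)]$ as the expectation of the restriction-avoidance probability for the $\SLE_\kappa$ slit domain, identifying the correct one-sided boundary restriction exponent attached to $[1-\eps,1]$ in terms of $\alpha$ and hence of $\kappa$; (ii) run the Loewner evolution, show the avoidance probability is a martingale, and solve the resulting second-order ODE, recognizing the solution in terms of $f$ and of the power prefactors $x^{2/\kappa}(1-x)^{1-6/\kappa}$ that already define $Z$; (iii) plug in the initial data (the cross-ratio of $(\infty,0,1-\eps,1)$, which is $\eps$ up to higher order, and the derivative prefactors which are $1+O(\eps)$), and extract the leading term as $\eps\to 0$; (iv) do the Gamma-function arithmetic to reach the stated constant. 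The main obstacle is step (ii)–(iii): pinning down the correct \emph{normalization} of the hypergeometric solution — i.e.\ which linear combination of the two solutions of the ODE is selected by the boundary behavior of the restriction-avoidance probability as the curve swallows one of the marked points — and making sure the exponent of $\eps$ comes out as $8/\kappa-1$ rather than $2/\kappa$ (this is precisely the competition $1-6/\kappa<2/\kappa$ that was flagged after \eqref{probab0}). Equivalently, one must verify that the restriction sample attached to the tiny segment, rather than the $\SLE$ itself, controls the rate — this is the analogue of Lemma~\ref{main0}'s role in the non-simple case, and I would expect to need a short separate argument (or a careful reading of the ODE's indicial exponents at the relevant singular point) to justify it.
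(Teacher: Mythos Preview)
Your approach is essentially the paper's: integrate out the restriction sample via its defining property, build a bounded martingale under the Loewner flow, derive and solve the resulting hypergeometric ODE, and read off the $\eps\to 0$ asymptotics through the connection formula. Two points will straighten out your execution. First, writing $Q(1-\eps):=1-\P[C(\eps)]$, the martingale argument gives an ODE for $Q$ on $(0,1)$ with boundary values $Q(0)=0$, $Q(1)=1$; after the substitution $Q(x)=x^{2/\kappa}A(x)$ the function $A$ solves exactly the hypergeometric equation satisfied by $f$, and the boundary data force $Q(x)=x^{2/\kappa}f(x)/f(1)$ --- there is \emph{no} $(1-x)^{1-6/\kappa}$ prefactor here (that factor appears in $Z$, not in $Q$). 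Second, your worry about which exponent emerges is not the $x\to 0$ indicial competition flagged after \eqref{probab0}; here the issue is at $x=1$, and it is a \emph{cancellation} of the linear terms: expanding $(1-\eps)^{2/\kappa}$ and $f(1-\eps)/f(1)$, the $O(\eps)$ contributions cancel exactly because $f'(1)=-(2/\kappa)f(1)$ (which follows from the ODE for $A$ evaluated at $1$), and the connection formula \eqref{connection1} then exhibits the surviving leading correction as $-(\eta/f(1))\eps^{8/\kappa-1}$ with $\eta=-\Gamma(8/\kappa)\Gamma(1-8/\kappa)/\bigl(\Gamma(4/\kappa)\Gamma(1-4/\kappa)\bigr)=1/(-2\cos(4\pi/\kappa))$. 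No analogue of Lemma~\ref{main0} is needed: once $Q$ is explicit the asymptotics are immediate.
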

Indeed, combining this lemma with (\ref {probab0}) and Lemma~\ref{main2} then shows that $\theta = - 2 \cos (4 \pi / \kappa)$.  The proof of Lemma~\ref{main3}
will be explained in  Section~\ref{Sproof2}.

\section {Proof of Lemma~\ref{main1}} 
\label {Sproof1}
In the present section, we will prove Lemma~\ref{main1}. This will complete the proof of Theorem~\ref{mainprop} in the case where $\kappa \in (4,8)$.

Let us first derive another result that will be useful in our proof, and that deals with the usual $\CLE_\kappa$ (with no wired boundary part) for $\kappa \in (4,8)$.  
Note that $8/ \kappa \in (1,2)$ {for $\kappa \in (4,8)$} (we will implicitly and repeatedly use this fact in the following arguments). 
Let $(w_t)$ be an $\SLE_\kappa (\kappa -6)$ in $\HH$ starting from $w$, with initial marked point $o \ge w$, and {targeting $\infty$}. 
Denote by $g_t$ the usual Loewner map from the unbounded connected component of the complement of $w[0,t]$ in $\HH$ into $\HH$ such that $g_t ( z)  = z + o(1)$ as 
$z \to \infty$. 

Recall that the {law of the} driving function $W_t := g_t (w_t)$ {of} this Loewner chain can be {sampled from} using the following two steps: 
\begin {itemize}
 \item Sample a reflected Bessel process $X$ with dimension $d = 3- 8/ \kappa \in (1,2)$ started from $(o-w) / \sqrt {\kappa}$ (at the end of the day, the process $\sqrt {\kappa} X_t$ will be equal to $O_t-W_t$, {the difference between the force point and the driving process}).  
 \item Set 
 \[ O_t = o + \int_0^t \frac{2}{\sqrt {\kappa} X_s} ds \quad\text{and}\quad W_t = O_t - \sqrt {\kappa }X_t .\]
\end {itemize} 
{Note that the image under $g_t$ of the leftmost point $o_t$ on $[o, \infty)$ that has not been swallowed by the Loewner chain before time $t$ is equal to $O_t$.  For each $b > o$, let $T_b$ be the first time at which $b$ is swallowed by the Loewner chain (see Figure \ref{picLT} for the case $w=o$).  As the Bessel process dimension $d$ is {strictly} between $1$ and $2$, we have that $T_b < \infty$ almost surely.}

\begin{figure}[ht!]
  \includegraphics[scale=0.6]{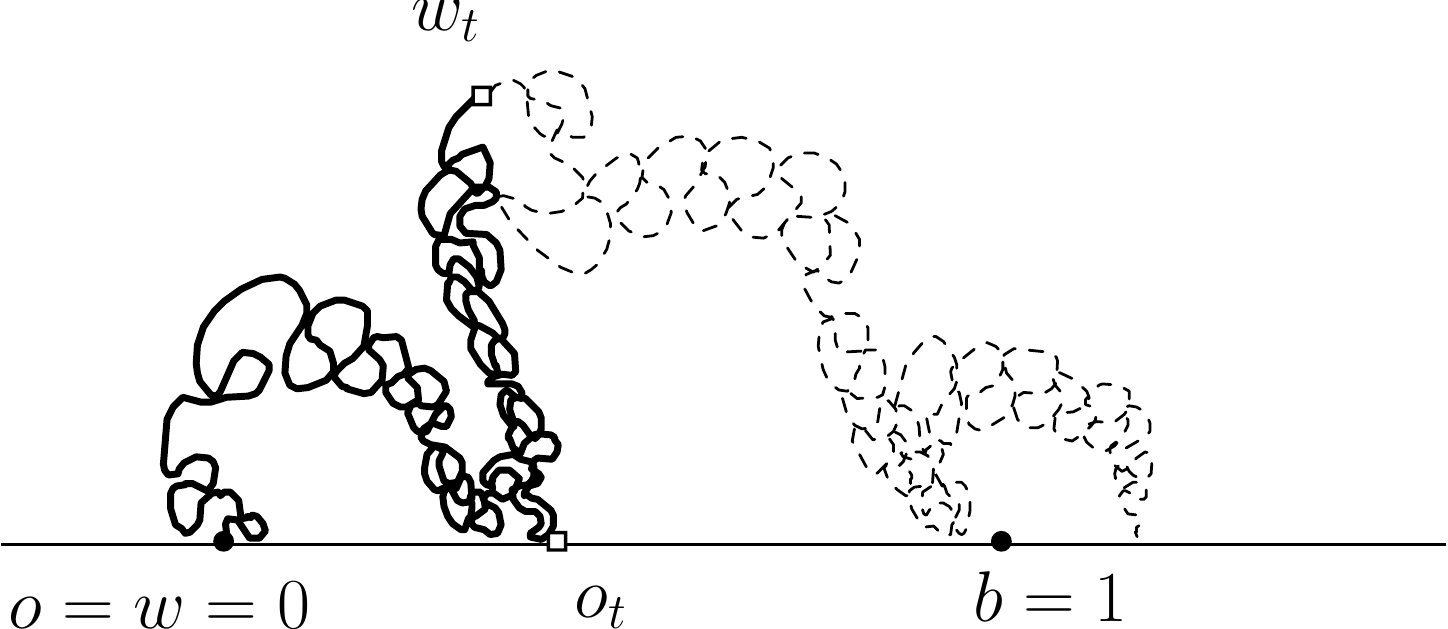}
  \caption{Sketch of the $\SLE_\kappa (\kappa -6)$ {for $w=o=0$}. Here, $b=1$, $t < T_b$ and the dashed curve corresponds to the curve between $t$ and $T_b$.}
  \label{picLT}
\end{figure}

We will also use the local time at the origin of the process $O-W$, which is a multiple of the local time at the origin of the Bessel process $X$.  More precisely, we define this local time as
\[ \ell_t := \lim_{\eps \to 0} \eps^{8/\kappa -1} N_t^{0 \to \eps}\] 
where $N_t^{0 \to \eps}$ denotes the number of upcrossings from $0$ to $\eps$ by $O-W$ before time $t$.  {Let $\tau_\eps$ be the first time that $O-W$ hits $\eps$.  Due to our particular normalization for the definition of the local time,  the expected value of $\ell$ at time $\tau_\eps$ is exactly $\eps^{8/\kappa -1 }$ when $w=o=0$.}

The goal of this section is to prove the following fact, that can be viewed as a statement about the Bessel flow.
\begin {lemma}
 \label {main5}
{Suppose that $w=o=0$.} Then  
\[ \E[ \ell_{T_1} ] = \frac { \Gamma ( 4 / \kappa) } { \Gamma (2- 8/\kappa) \Gamma (12/\kappa - 1)} .\]
\end {lemma}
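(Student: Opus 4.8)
The plan is to compute $\E[\ell_{T_1}]$ by setting up a boundary value problem for the function $u(o,w) := \E_{o,w}[\ell_{T_1}]$, where the expectation is taken for the $\SLE_\kappa(\kappa-6)$ started from driving point $w$ with force point $o \ge w$, and $T_1$ is the swallowing time of $1$. The key observation is that $\ell$ only increases when $O-W$ is at $0$, i.e.\ when $w=o$, so away from the diagonal $u$ is harmonic (or more precisely satisfies the generator equation of the diffusion $(O_t, W_t)$), and on the diagonal it picks up the local-time contribution. By the scaling properties of the Bessel process and of Loewner evolution, $u$ should be a homogeneous function: scaling space by $\lambda$ multiplies $\ell$ by $\lambda^{8/\kappa-1}$ and $T_1$ becomes $T_\lambda$, so $u(\lambda o, \lambda w)$ with target $\lambda$ relates things; after reducing to the one-dimensional picture $u$ becomes (up to a power-law prefactor) a function of the single cross-ratio-type variable built from $(W_t, O_t, g_t(1))$, and one expects the answer to be expressible via a hypergeometric function, matching the $f$ and $\Gamma$-factors appearing in the statement.

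Concretely, I would first reduce to $w=o=0$ as in the statement and track the three points $W_t, O_t, g_t(1)$ under the Loewner flow; the natural variable is $r_t := (O_t - W_t)/(g_t(1) - W_t) \in (0,1)$, which starts at $0$ and at which $r=1$ corresponds to $b=1$ being swallowed (time $T_1$). Using the explicit SDEs — $dW_t = \sqrt\kappa\, dB_t$ (with the Bessel drift hidden in the reflection/local-time term), $dO_t = \tfrac{2}{O_t-W_t}dt$, $dg_t(1) = \tfrac{2}{g_t(1)-W_t}dt$ — one computes the Itô dynamics of $r_t$ and finds a time-changed diffusion on $(0,1)$ whose generator is a hypergeometric operator with indices determined by $\kappa$. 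Then $m(r) := \E[\ell_{T_1} \mid r_t = r]$ (suitably normalized by the current scale $g_t(1)-W_t$ raised to the power $8/\kappa-1$, which is itself a martingale-type compensator one must check) solves an inhomogeneous ODE: homogeneous part from the generator, inhomogeneous ``source'' from the local time accumulated near $r=0$. Solving this ODE with the boundary conditions (regularity/vanishing at $r=1$, and the prescribed local-time normalization $\E[\ell_{\tau_\eps}] = \eps^{8/\kappa-1}$ encoding the behavior at $r=0$) yields $m(0) = \E[\ell_{T_1}]$.

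Alternatively — and this may be cleaner — one can avoid solving an ODE by a direct change-of-variables/occupation-time computation: write $\ell_{T_1} = \int_0^{T_1} d\ell_s$, use the strong Markov property at $\tau_\eps$ together with the normalization $\E[\ell_{\tau_\eps}] = \eps^{8/\kappa-1}$ to reduce to $\E[\ell_{T_1}] = \lim_{\eps\to 0}\eps^{8/\kappa-1}\,\E[N^{0\to\eps}_{T_1}]$, and then compute the expected number of upcrossings via the known hitting/swallowing probabilities for the Bessel flow. The expected number of upcrossings from $0$ to $\eps$ before $1$ is swallowed can be expressed through the probability that, starting just after an upcrossing, the process returns to a neighborhood of $0$ before $b=1$ is swallowed; this ``return vs.\ swallow'' probability is precisely of the Cardy/Schramm-formula type and is given by a ratio of incomplete-Beta or hypergeometric integrals analogous to the formula for $\P[D(\eps)]$ quoted earlier. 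Taking the $\eps\to 0$ asymptotics of that ratio produces the $\Gamma$-factors $\Gamma(4/\kappa)/(\Gamma(2-8/\kappa)\Gamma(12/\kappa-1))$.

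The main obstacle I anticipate is the bookkeeping of the scaling/compensator factor: $\ell$ has dimension $8/\kappa-1$ in length, $g_t(1)-W_t$ shrinks to $0$ as $t\to T_1$, and one must correctly identify which power-law prefactor makes the conditional expectation a genuine martingale (or solves the right ODE) so that the boundary condition at $r=1$ is the trivial one and only the $r\to 0$ normalization enters. Getting this exponent right — and checking that the contribution to $\ell$ from times when $r_t$ is bounded away from $0$ is negligible in the relevant limit, so that the local time really is ``created at $r=0$'' with the stated normalization — is the delicate point; once that is pinned down, the remaining computation is a standard hypergeometric ODE solve (or incomplete-Beta asymptotics) whose answer is forced to be the claimed combination of Gamma functions, and it should be cross-checked against the $\kappa\to 4^+$, $\kappa\to 6$, and $\kappa\to 8^-$ special cases where $\E[\ell_{T_1}]$ can be guessed from the known connection probabilities.
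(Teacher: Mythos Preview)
Your first approach is essentially the paper's: reduce by scaling to a one-variable function (the paper uses $x=(g_t(1)-W_t)/(g_t(1)-O_t)\in(1,\infty)$ rather than your $r\in(0,1)$, a cosmetic change of coordinate), observe that this function solves a \emph{homogeneous} hypergeometric ODE on the open interval with vanishing boundary value at the swallowing endpoint, and then pin down the one remaining constant via the boundary asymptotic $U(1)-U(1+h)\sim h^{8/\kappa-1}$, which the paper extracts from the local-time normalization $\E[\ell_{\tau_h}]=h^{8/\kappa-1}$ through a Markov decomposition at $\min(T_1,\tau_h)$ together with an estimate showing $g_{\tau_h}(1)-O_{\tau_h}$ is close to $1$ --- precisely the ``delicate point'' you anticipate. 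One small correction: since $\ell$ increases only when $O-W=0$, the ODE on the interior is homogeneous and the local-time contribution enters purely as this singular boundary behavior at $r=0$, not as a source term; your ``alternative'' upcrossing approach would require the very hitting probability $\P[\tau_\eps<T_1]$ that the paper instead \emph{derives} from this lemma (Corollary~\ref{co}), so it is not an independent shortcut.
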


\begin {proof}
Let us define the more general function $L(a,b)$ to be the expected value of $\ell_{T_b}$ when the process is started from $w=-a$, $o=0$.  {By scaling, we have that}
\[ L (a, b) = b^{8 / \kappa -1 } L ( a/b, 1).\]
Let us define  the function $U$ on $[1, \infty)$ so that $L(a/b , 1 ) = U ( a/b + 1 )$. With this notation, our goal is therefore to determine
\[ {u_1 :=} L(0,1)=U(1).\]

When the Bessel process evolves away from $0$, the local time at zero does not change. Hence, $L(O_t- W_t, g_t(1) - O_t)$ {is} a local martingale up to the first hitting time of $0$ by $O-W$. This implies (using the standard arguments for SLE martingales) that $U$ is smooth on $(1,\infty)$ and {satisfies}
$$ 
(1-x)x U''(x) + \left( \frac 4 \kappa + \left(\frac 4 \kappa -2\right) x \right)  U' (x) +  \frac 4 \kappa  \left(\frac 8  \kappa -1 \right)  U(x)  
=0$$
{with boundary conditions
\[ U(1) = u_1 \quad\text{and}\quad \lim_{x \to \infty} U(x) = 0.\]}
In other words (see Appendix~\ref{App2}), the function $U$ is equal to 
$$ U(x) = u_1 x^{- 4 / \kappa} \frac {F (4/ \kappa, 1 , 12/\kappa; 1/x ) }{  F (4/ \kappa, 1 , 12/\kappa; 1 )}$$
(note that the ODE is exactly Equation (\ref {HDE}) for the coefficients $a=c= 4/\kappa$, $b= 1- 8 /\kappa$, so that $U$ is a multiple of 
the function $h_1$ defined in the Appendix~\ref{App2}).

Our goal in the next paragraph is to show that 
\begin {equation}
\label {Lexpansion}
 L(0,1) =  L( h, 1 ) + h^{8/\kappa -1 } + o (h^{8/\kappa -1 }) {\quad\text{as}\quad h \to 0.}
\end {equation}
{This} will then enable us to identify $u_1$.
Let us define for all positive $h$, 
 $$Y_h := \one_{\tau_h < T_1 } (g_{\tau_h} (1) - O_{\tau_h}).$$ Note that by the monotonicity properties of the Bessel flow, $Y_h \le  1$ when one starts with $o=w=0$.  Using the Markov property at $\min (T_1, \tau_h)$ and  the additivity of the local time  
we see that 
$$ 
 L( 0, 1) =\E[ \ell_{T_1} ] =   \E[ \one_{T_1 < \tau_h} \ell_{T_1} ] + \E[ \one_{ \tau_h < T_1 } \ell_{\tau_h} ] +
 \E[ L (h, Y_h) ] $$
 (note that by definition $L (h, Y_h) = \one_{\tau_h < T_1 } L (h, Y_h)$).
 Using the scaling property, the fact that the probability that $T_{1/h} < \tau_1$ tends to $0$ as $h \to 0$, and the fact that $\E[ \ell_{\tau_h} ] 
 = h^{8/\kappa -1 }$ (this is where we use our actual normalization in the definition of the local time),
 we get that 
 \begin {eqnarray*}
 \lefteqn {  |\E[ \one_{T_1 < \tau_h} \ell_{T_1} ] + \E[ \one_{ \tau_h < T_1 } \ell_{\tau_h} ]  - h^{8/\kappa -1 }| }
\\ && =  \E[ \one_{ T_1 < \tau_h } ( \ell_{\tau_h } -  \ell_{T_1})  ] 
 \le  \E[ \one_{ T_1 < \tau_h }  \ell_{\tau_h }   ]  =   h^{8/\kappa -1 } \E[ \one_{ T_{1 / h } < \tau_1} \ell_{\tau_1} ] 
 = o ( h^{8/\kappa -1 } ).
 \end {eqnarray*}
 It therefore remains to estimate $ \E[ L (h, 1 ) - L (h, Y_h) ] $. Note that once we condition on $Y_h =y$, 
 we can use the same flow to couple the realizations that lead to $L (h, 1)$ and $L ( h, y)$ (defined as expected values of local times).
 {The difference between the two quantities will therefore be due 
 to the configurations in this coupling where the $\SLE_\kappa (\kappa -6)$ hits the interval $[y, 1]$ for which one then counts the local time accumulated after that time but before $T_1$. 
 This is an event that has probability bounded by a constant times $(1-y)^{\beta }$ for some $\beta {>0}$.  We remark that it is in fact known that 
 $\beta = (\kappa -4)^2 / (2 \kappa) >0 $, see \cite[Theorem~1.8]{MillerWu}.  For what follows, we will only use that $\beta > 0$ and in fact not need such a precise bound.}  Hence, for some constant $C >0$ and all $h >0$ and $y < 1$, {we have that}
\[ | L (h, 1 ) - L (h, y ) | \le C (1-y)^{\beta + 8/\kappa -1} \]
and therefore
\begin {eqnarray*}
\lefteqn { | \E[ L ( h, {Y_h} ) ] - L(h, 1 ) | }\\
&&  \le C \E[ (1-Y_h)^{\beta+ 8 / \kappa -1  } ] 
 \le  C'  ( \E[ | O_{\tau_h} |^{\beta + 8/\kappa - 1} ] + \E[\one_{\tau_h < T_1}  |1-g_{\tau_h} (1) |^{\beta+ 8 / \kappa -1  } ])
 \end {eqnarray*}
{where $C' > 0$ is a constant.}  By scaling, we note that 
\[ \E[ | O_{\tau_h} |^{\beta + 8/\kappa - 1} ] = h^{8/\kappa -1  + \beta} \E[ | O_{\tau_1} |^{\beta + 8/\kappa - 1} ].\]
 On the other hand, it is also easy to see that 
$ \E[\one_{\tau_h < T_1}  |1-g_{\tau_h} (1) |^{\beta+ 8 / \kappa -1  } ])$ decays to $0$ faster than $h^{8/\kappa -1 + \beta}$ as well: Typically, $\tau_h$ will be of order $h^2$ (because of scaling), so that $g_{\tau_h} (1) - 1$ will {be} of order $h^2$ as well. Simple estimates about the probability that $\tau_h$ is exceptionally large or $W$ fluctuates exceptionally on a small time interval allows {us} to conclude.  Putting the pieces together, we then get indeed (\ref {Lexpansion}). 

This is now enough to pin down the exact value of $u_1$. Indeed (\ref {Lexpansion}) {implies} that when $h \to 0$,
\[ U(1) - U(1+h) \sim h^{8/\kappa -1}.\]
Since we know that in the right neighborhood of $1$, $U$ has to be a linear combination of 
$F ( 4 / \kappa, 1 - 8/ \kappa, 2 - 8/\kappa ; 1-x)$   and of $ (x-1)^{8 /\kappa} F (0, 12/\kappa -1 , 8/ \kappa ;  1-x)$, by looking at 
the expansion near $1$, we conclude that 
\[ U(x)= u_1 F ( 4 / \kappa, 1 - 8/ \kappa, 2 - 8/\kappa ; 1-x) - (x-1)^{8 /\kappa} F (0, 12/\kappa -1 , 8/ \kappa ;  1-x).\]
On the other hand, we have seen that $U$ is also a multiple of the function 
$x^{- 4 / \kappa} F (4/ \kappa, 1 , 12/\kappa; 1/x )$, and by comparing this with the connection formula (\ref{connection2})
that relates these three hypergeometric functions, we see that $-u_1$ is the ratio of the two coefficients on the right-hand side of 
 (\ref{connection2}) for the appropriate choice of $a=4/\kappa$, $b=1- 8/ \kappa$ and $c=4/\kappa$, and we obtain that 
\[ u_1 = \frac { - \Gamma ( 8/ \kappa -1) \Gamma ( 4 / \kappa)}{\Gamma (8/\kappa) \Gamma (12/\kappa -1 ) \Gamma (1- 8/ \kappa)}\]
which proves the claim. 
\end {proof}

This lemma will be used in the proof of Lemma~\ref{main1} via the following corollary: 
\begin {corollary}
\label {co}
Assume that $w=b=0$. Then, as $y \to 0$, 
$$ \P[ \tau_{y^{3/4}}  < T_{y}  ] \sim (y^{1/4})^{8/ \kappa -1 } \times \frac { \Gamma ( 4 / \kappa) } { \Gamma (2- 8/\kappa) \Gamma (12/\kappa - 1)}. $$
\end {corollary}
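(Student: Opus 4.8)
The plan is to read off the corollary from Lemma~\ref{main5} by a first- and second-moment estimate for the number of ``large'' excursions of the driving process, combined with Brownian scaling. All the considerations below take place for the process started from $w=o=0$ and targeting $\infty$, with the additional marked point located at $y$; we write $O_t-W_t=\sqrt\kappa\,X_t$ with $X$ a reflected Bessel process of dimension $d=3-8/\kappa\in(1,2)$, and $R_t=g_t(y)-O_t$ for the (continuous, strictly decreasing) distance from the image of the point $y$ to the force point, so that $R_0=y$ and $T_y=\inf\{t:R_t=0\}$.

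The one geometric input I would establish first is that $R$ stays strictly positive throughout every excursion of $O-W$ away from $0$; in other words $T_y$ never falls in the interior of such an excursion, and $R$ reaches $0$ only through the accumulated effect of infinitely many excursions. This follows from the fact that the evolution $\dot R_t=-2R_t/\bigl((R_t+\sqrt\kappa X_t)\sqrt\kappa X_t\bigr)$ is self-limiting near a zero of $X$: on the set where $\sqrt\kappa X_t\le R_t$ one has $\dot R_t\approx -2/(\sqrt\kappa X_t)$, and since $d>1$ the integral of $1/X_t$ over a near-zero passage is finite and, by the scaling of $X$, comparable to the current value of $R_t$, so that across each passage $R$ is only multiplied by a factor bounded away from $0$; a comparison argument makes this rigorous. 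Granting this, put $\eps=y^{3/4}$ and let $N$ be the number of completed excursions of $O-W$ with maximum at least $\eps$ whose local-time label is smaller than $\ell_{T_y}$. Because $R$ survives each such excursion, every excursion counted by $N$ reaches level $\eps$ strictly before $T_y$, and conversely if $O-W$ reaches $\eps$ before $T_y$ the excursion realizing this is completed before $T_y$; hence $\{\tau_{y^{3/4}}<T_y\}=\{N\ge1\}$ exactly.

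Now I would compute the moments of $N$. By It\^o's excursion theory the excursions of $O-W$ with maximum at least $\eps$ form, in the local-time parametrization, a Poisson process of rate $n(\max\ge\eps)=\eps^{\,1-8/\kappa}$; indeed, with our normalization $\ell_{\tau_\eps}$ is exponential with mean $\eps^{\,8/\kappa-1}=1/n(\max\ge\eps)$. Since $\ell_{T_y}$ is a stopping time for the local-time filtration, $\E[N]=\eps^{\,1-8/\kappa}\,\E[\ell_{T_y}]$. Brownian scaling of the Loewner chain gives $\E[\ell_{T_y}]=L(0,y)=y^{\,8/\kappa-1}L(0,1)$, and Lemma~\ref{main5} yields $L(0,1)=\E[\ell_{T_1}]=\Gamma(4/\kappa)/\bigl(\Gamma(2-8/\kappa)\Gamma(12/\kappa-1)\bigr)$; with $\eps=y^{3/4}$ this gives
\[ \E[N]=y^{\,3/4-6/\kappa+8/\kappa-1}\,\frac{\Gamma(4/\kappa)}{\Gamma(2-8/\kappa)\,\Gamma(12/\kappa-1)}=(y^{1/4})^{\,8/\kappa-1}\,\frac{\Gamma(4/\kappa)}{\Gamma(2-8/\kappa)\,\Gamma(12/\kappa-1)}. \]
For the second moment, apply the strong Markov property at the completion time of each excursion counted by $N$: the room remaining there is at most $y$, so by scaling the conditional expected number of further excursions with maximum at least $\eps$ before $T_y$ is at most $\eps^{\,1-8/\kappa}L(0,y)=\E[N]$; summing over the (at most $N$) such excursions, $\E\bigl[\binom{N}{2}\bigr]\le \E[N]^2$, i.e.\ $\E[N(N-1)]\le 2\E[N]^2$.

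Finally, Bonferroni's inequality gives $\E[N]\ge \P[N\ge1]\ge \E[N]-\E\bigl[\binom{N}{2}\bigr]\ge \E[N](1-\E[N])$, and since $\E[N]\to0$ as $y\to0$ we conclude $\P[\tau_{y^{3/4}}<T_y]=\P[N\ge1]\sim\E[N]$, which is the asserted asymptotic. I expect the ``no-overshoot'' statement of the second paragraph to be the main obstacle: it is what makes $\{\tau_{y^{3/4}}<T_y\}$ coincide exactly with $\{N\ge1\}$, and proving it requires a genuine (if elementary) analysis of the room-process ODE. If one prefers, it can be replaced by the sandwich $\{N\ge1\}\subseteq\{\tau_{y^{3/4}}<T_y\}\subseteq\{N'\ge1\}$, where $N'$ also counts a large excursion possibly straddling $T_y$, at the cost of having to show $\E[N'-N]=o(\E[N])$ — which again comes down to controlling how $R$ decreases inside a large excursion.
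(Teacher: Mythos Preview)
Your proof is correct and follows essentially the same approach as the paper: excursion theory for $O-W$, Wald's identity to compute $\E[N]$ via Lemma~\ref{main5} and scaling, a second-moment/tail bound to control $\P[N\ge 2]$, and then $\P[N\ge 1]\sim\E[N]$. The paper obtains Wald's identity by a discretization-in-local-time argument and then letting the mesh go to zero, while you invoke directly that $\ell_{T_y}$ is a stopping time for the excursion filtration; and the paper controls the tail via $\P[\mathcal N\ge n]\le \P[\mathcal N\ge 1]^n$, while you bound $\E[N(N-1)]$ --- both routes are fine.

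One remark: the ``no-overshoot'' statement you flag as the main obstacle is in fact immediate and needs no ODE analysis. Since $W_t\le O_t\le g_t(y)$ for $t<T_y$ (the force point lies to the right of the tip and to the left of $y$), and $g_t(y)-W_t\to 0$ at $T_y$ by definition of swallowing, the squeeze gives $O_{T_y}-W_{T_y}=0$. Thus $T_y$ lies in the zero set of $O-W$ and cannot fall in the interior of an excursion, which is exactly what you need for $\{\tau_{y^{3/4}}<T_y\}=\{N\ge 1\}$. Your phrasing ``$R$ reaches $0$ only through the accumulated effect of infinitely many excursions'' is actually not correct: $T_y$ is the right-endpoint of a single excursion (the loop that swallows $y$), so $R$ hits $0$ at the end of that last excursion. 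But this does not affect the argument, since that excursion is still completed at $T_y$ and hence counted in $N$.
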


\begin {proof}
Let us first consider the Poisson point process of excursions $(e_{l_i})$ away from the origin of the process $O-W$, indexed by the Bessel local time (with our choice of 
normalization). In other words, for each given $l$, the ordered concatenation of all the excursions $e_{l_i}$ with $l_i < l$ is the process $O-W$ up to the first time at which its local time at $0$ hits $l$. 
Clearly, for any positive $u$, the number of excursions in the sets $(e_{l_i}, l_i \in [ku, (k+1)u ))$ that reach level $y^{3/4}$ for $k \ge 0$ form an i.i.d. sequence of Poisson random variables 
with mean $u (y^{3/4})^{- (8/\kappa -1) }$. It then follows from Wald's identity, that the number 
 ${\mathcal N} (u)$ of excursions of $O-W$ that reach level $y^{3/4}$ and that have been completed before the first time $T_y (u)$ after $T_y$ at which the local time at $0$ of $O-W$ is a multiple 
 of $u$ is equal to  
$$ \E[ {\mathcal N} (u)  ] =  u (y^{3/4})^{- (8/\kappa -1) } \times \E[ \ell_{T_y (u)} / u  ] = (y^{3/4})^{- (8/\kappa -1) } \times \E[ \ell_{T_y (u)}  ]  .$$
Letting $u \to 0$, we deduce  by monotone convergence the corresponding Wald's identity for the Poisson process: 
The expectation of the  number ${\mathcal N}$ of excursions of $O-W$ that reach level $y^{3/4}$ and that have been completed before time $T_y$
is $$ \E[ {\mathcal N} ] = (y^{3/4})^{- (8/\kappa -1) } \times \E[ \ell_{T_y } ] .$$
By scaling, this quantity is also equal to 
$$ (y^{1/4})^{ 8/\kappa -1 } \times \E[ \ell_{T_1} ].$$ 

It is also easy to see, using similar arguments,  that $\P[ {\mathcal N} \ge n ] \le \P[{\mathcal N} \ge 1 ]^n$, 
 so that in fact, 
 $$ \E[ {\mathcal N} \one_{{\mathcal N} \ge 2} ] =  \P[{\mathcal N}\ge 2] + \sum_{ n \ge 2} \P[  {\mathcal N} \ge n ] \le 4\P[{\mathcal N} \ge 1]^2 $$
 for all small enough $y$. It follows that as $y \to 0$,  
$$ \P[ \tau_{y^{3/4}}  < T_y ] = \P[ {\mathcal N} \ge 1 ] \sim \P[ {\mathcal N} = 1 ] \sim \E[ {\mathcal N }] \sim 
\frac {\Gamma (4/\kappa)}{\Gamma (2 - 8 / \kappa) \Gamma (12/ \kappa -1 )} \times  (y^{1/4})^{8/\kappa -1 }.$$   
\end {proof}

Let us now explain how to deduce Lemma~\ref{main1} from Corollary~\ref{co}:

\begin {proof}[Proof of Lemma \ref {main1}]
Instead of working with the $\CLE_\kappa$ in $\HH$ with wired boundary on $\R_-$ and 
using the additional marked points at $1-\eps$ and $1$, {we will instead} work in the unit disk $\D$ and choose the four points 
$a_\eps$, $\overline a_\eps$, $-a_\eps$ and $- \overline a_\eps$ on the unit circle, where $a_\eps$ is chosen very close to $1$ so that the cross-ratio corresponding to 
in the unit disk these four points is exactly $\eps$. Note that as $\eps \to 0$, $|a_\eps -1|$ is of the order of $\sqrt {\eps}$. 
These four points define two small boundary arcs $\partial_\eps$ and $-\partial_\eps$, respectively near $1$ and $-1$. 

By conformal invariance, the event $B(\eps)$ becomes the event $B' (\eps)$ that, if one looks at the CLE with wired boundary condition on $-\partial_\eps$ and 
explores the loops (of this wired CLE) attached to $\partial_\eps$ in their order of appearance starting from $\overline a_\eps$, one finds a time at which the cross-ratio 
between $( - a_\eps, - \overline a_\eps , w_t, o_t)$ in the domain $D_t$ reaches ${\eps}^{7/8}$.
 {Note already that this will occur for loops attached to $\partial_\eps$ that have diameter of at least 
$O(\eps^{3/8})$, because the arc $- \partial_\eps$ has a length of the order of $\eps^{1/2}$. The goal of the next few lines is to estimate the probability of $B(\eps)$ 
with accuracy.}  

In order to apply our previous estimates for non-conditioned CLE's, we first sample the $\SLE_\kappa$ $\gamma$ that joins the end-points of $-\partial_\eps$. By the same $8/\kappa-1 $
boundary exponent for SLE, we know that the probability that this SLE has diameter greater than $\eps^{1/4}$ is bounded by a constant times $(\eps^{1/4})^{8/\kappa -1 }$ 
 {(recall that the endpoints of this SLE are at distance $\eps^{1/2}$ from each other, so this event corresponds roughly to the event that an SLE from $0$ to $1$ in the upper half-plane reaches distance $\eps^{1/4} / \eps^{1/2} = \eps^{-1/4}$).} 
On the event that the diameter of $\gamma$ is smaller than $\eps^{1/4}$ (which has probability very close to $1$ by the previous estimate),
we  now look  at the CLE in the complement of this small SLE: We can first map the connected component 
of the complement of this curve which has $\partial_\eps$ on its boundary back to the unit disk in such a way that $a_\eps$, $\overline a_\eps$ are fixed 
and (say) the two extremal points on $\gamma \cap \partial \D$ are mapped onto symmetric points on the real axis -- this defines a conformal map $\varphi$ that (by standard distortion 
estimates) is uniformly  very close to the identity map (the derivative of this map is uniformly close to $1$ on the right-half of the unit disk) in the neighborhood of $1$.  

We can also now discover the CLE in this disk, by using the $\SLE_\kappa (\kappa -6)$ exploration in the upper half-plane (which defines a process $W$ and $O$) and mapping it 
back onto the disk via the conformal map from $\HH$ onto $\D$ that maps $\infty$ to $-1$ and is normalized in the neighborhood of $\infty$. 
Then, distortion estimates {for conformal maps} show also that the cross-ratio corresponding to the four points $(-a_\eps, - \overline a_\eps, w_t, o_t)$ 
in the domain $D_t$ is very close to $\sqrt{\eps}$ times $W_t- O_t$ (i.e., the ratio between the two is uniformly close to $1$, as long as the $\SLE_\kappa (\kappa -6)$ stays in the 
right-hand half of the unit disk). 

Note finally that if the $\SLE_\kappa (\kappa -6)$ starting from $\overline a_\eps$ up to the swallowing time of $a_\eps$ does not stay in the right-hand side of the unit disc, then there 
exists a CLE loop in the unit disk of diameter at least $1/4$ that intersects the small arc (of size of order $\sqrt {\eps}$) between $\overline a_\eps$ and $a_\eps$. 
We will show in Lemma \ref {lastlemma} (in a conformally equivalent setting) that this quantity is bounded by a constant times $(\sqrt {\eps})^{8/\kappa -1 }$ which is an $o((\eps^{1/4})^{ 8/\kappa -1})$.

\begin{figure}[ht!]
  \includegraphics[height=6cm]{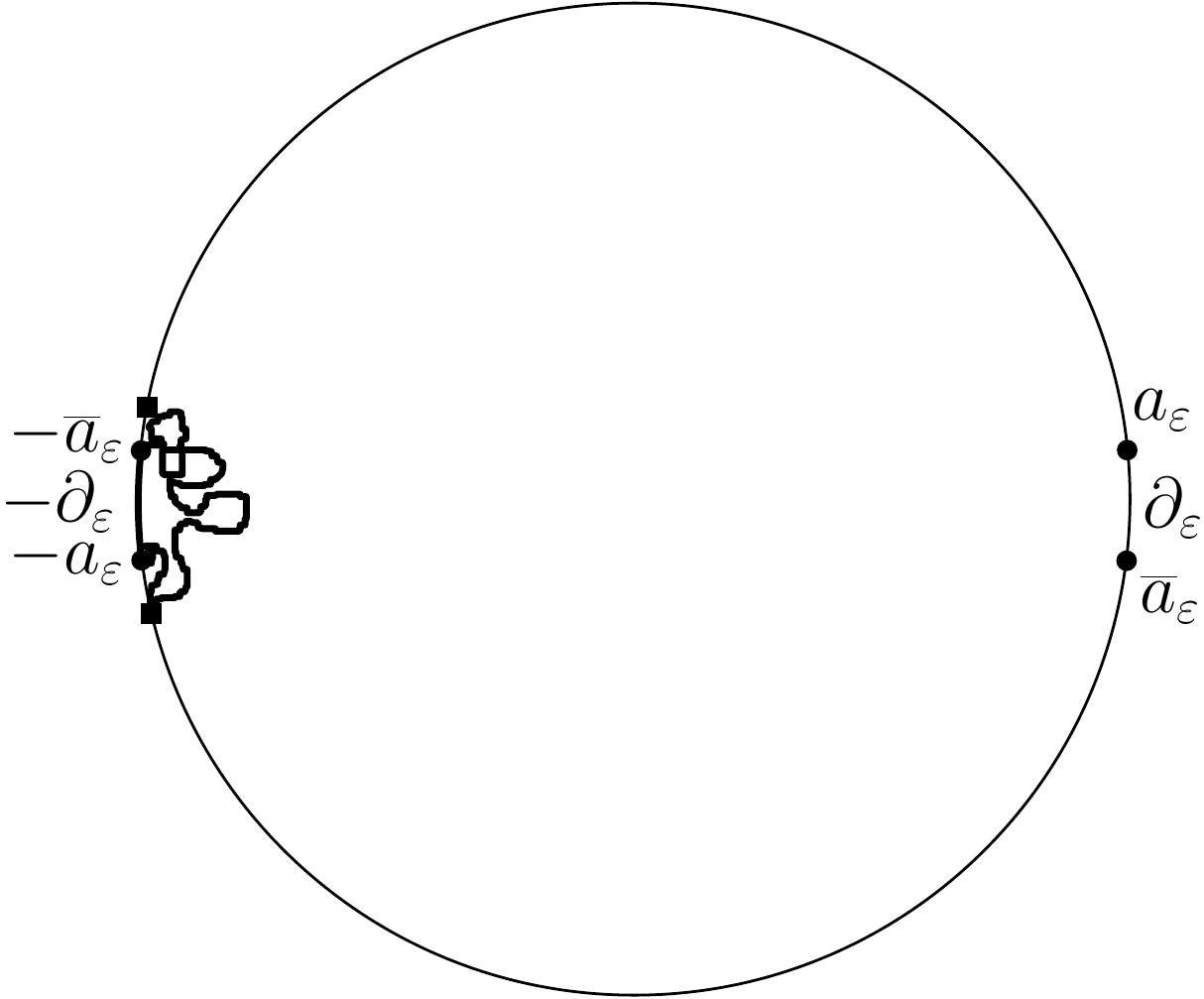}
  \quad
  \includegraphics[height=6cm]{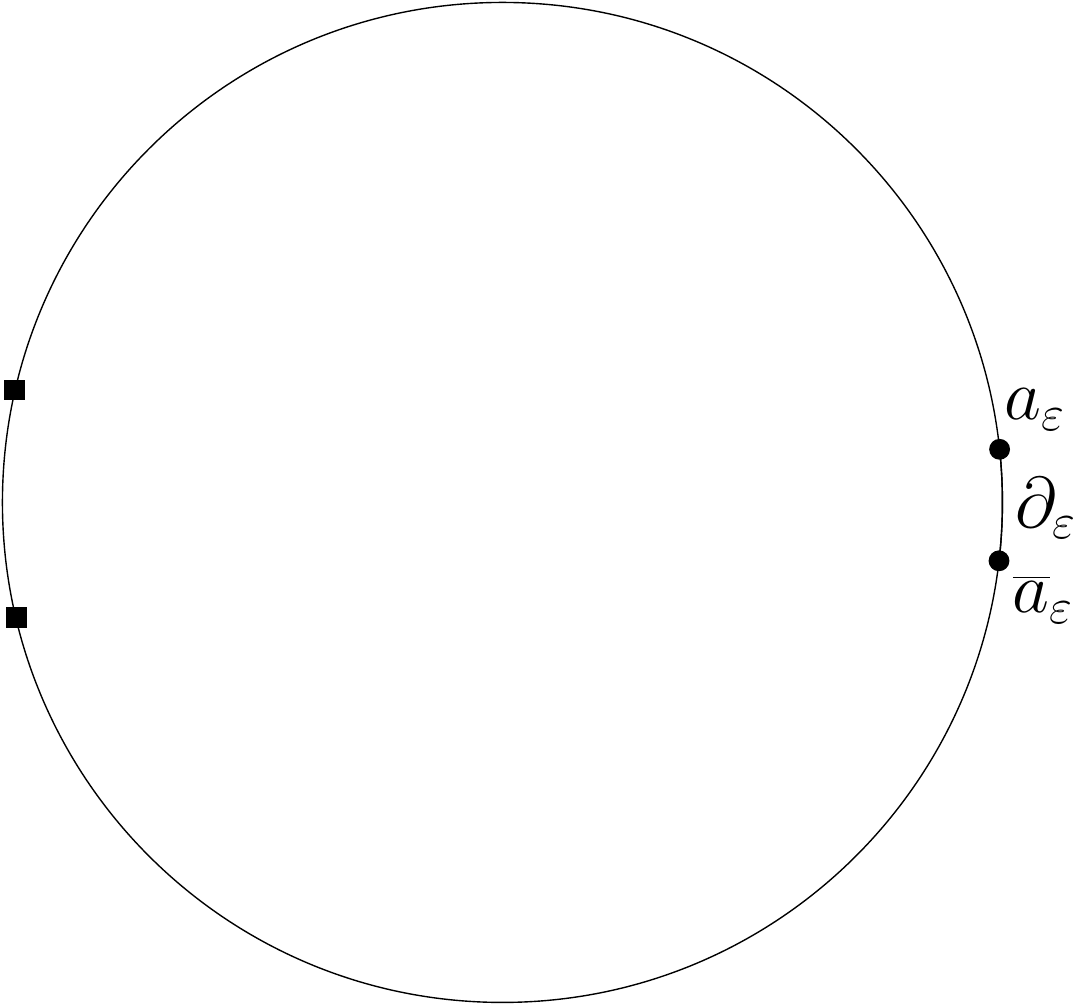}
  \caption{The SLE from $-a_\eps$ to $-\overline a_\eps$ in the wired $\CLE_\kappa$. The uniformizing map onto the unit disk is very close to the identity in the right-hand side of the disk, and therefore also near $\partial_\eps$.}
  \label{pic63}
\end{figure}

Wrapping things up, we get that the probability of $B(\eps)$ is up to an error of order $O(\eps^{1/4})^{8/\kappa -1 })$ asymptotic to the probability that $O-W$ hits $\eps^{3/8}$ before swallowing $\partial_\eps$ i.e., to 
$\P[ \tau_{y^{3/4}}  < T_{y}  ]$ in Corollary~\ref{co} for $y = \sqrt {\eps}$. Hence, we conclude that indeed, 
$$ 
\P[ B (\eps) ] \sim 
(\eps^{1/8})^{8/ \kappa -1 } \times \frac { \Gamma ( 4 / \kappa) } { \Gamma (2- 8/\kappa) \Gamma (12/\kappa - 1)}, $$
which concludes the proof of Lemma~\ref{main1}.
\end {proof}

It now finally remains to prove the following fact: 
\begin {lemma}
\label {lastlemma}
Consider a $\CLE_\kappa$ in the upper half-plane with $\kappa \in (4,8)$. Then there exists a constant $C= C(\kappa)$ such that the probability 
of the event $E_R$ that there exists a $\CLE_\kappa$ loop that intersects both the circle of radius $R$ and the interval $[-1,1]$ is bounded by $C  \times R^{-(8/\kappa -1 )}$ for all $R > 2$. 
\end {lemma}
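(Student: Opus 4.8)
The plan is to bound $\P[E_R]$ by controlling, for each dyadic scale, the loops that can plausibly reach both $[-1,1]$ and the circle of radius $R$. First I would reduce the event $E_R$ to a statement about the $\CLE_\kappa$ loop (or loops) that intersect the segment $[-1,1]$: any loop contributing to $E_R$ must intersect $[-1,1]$ and must have diameter at least $R-1$. So $E_R$ is contained in the event that some $\CLE_\kappa$ loop touching $[-1,1]$ has diameter at least $R-1$. The natural tool to estimate this is the $\SLE_\kappa(\kappa-6)$ exploration tree: recall from Section~\ref{SCLE} that if one starts an $\SLE_\kappa(\kappa-6)$ branch rooted at a boundary point and lets it run, it successively traces the loops that touch a prescribed boundary arc. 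Concretely, I would take the exploration branch rooted at $0$ (say, targeting $\infty$) and observe that every $\CLE_\kappa$ loop intersecting $(-\infty,0)\cup(0,\infty)$ near $[-1,1]$ is discovered by this branch; the loops touching $[-1,1]$ correspond to excursions of the driving process $O-W$ away from the origin while the tip is near $[-1,1]$.

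The heart of the argument is then the same Bessel/$\SLE$ boundary estimate that already appears repeatedly in the paper: for $\SLE_\kappa$ with $\kappa\in(4,8)$, an $\SLE_\kappa$ from a boundary point in $\HH$ hits a boundary interval $[R,\infty)$ (equivalently reaches distance $R$) with probability of order $R^{-(8/\kappa-1)}$ — this is precisely the $8/\kappa-1$ boundary exponent, the generalization of Cardy's formula recalled before \eqref{probab1}, and the asymptotics \eqref{probab1} give the exponent and constant. For a $\CLE_\kappa$ loop touching $[-1,1]$ to have diameter $\ge R$, the portion of that loop traced by the exploration must travel macroscopically far, and the relevant conditional law (after conformally mapping the discovered-so-far complement back to $\HH$, normalizing so the traced arc sits on $[0,1]$) is, by the wired-CLE description, that of an ordinary $\SLE_\kappa$ finishing the loop; by Koebe/distortion estimates the image of the circle of radius $R$ stays at distance $\gtrsim R$, so the probability that this $\SLE_\kappa$ reaches it is $O(R^{-(8/\kappa-1)})$ uniformly. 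Summing over the at most countably many loops touching $[-1,1]$ is handled by the martingale structure: I would introduce, along the exploration, the running probability (given the exploration so far) that some not-yet-completed or future loop touching $[-1,1]$ reaches radius $R$, note it is a supermartingale bounded by $1$, and use the explicit one-loop bound at the initial time to get $\P[E_R]\le C R^{-(8/\kappa-1)}$. Alternatively, and perhaps more cleanly, one can bound $\P[E_R]$ directly by the probability that the union of all loops touching $[-1,1]$ — which by the restriction/loop-soup considerations of \cite{QianWerner,Qian2} (for $\kappa\in(8/3,4]$) or by a direct $\SLE_\kappa(\kappa-6)$ hull estimate (for $\kappa\in(4,8)$) has a controlled, restriction-type boundary — reaches radius $R$, and this probability again decays like $R^{-(8/\kappa-1)}$ because $8/\kappa-1$ is the relevant half-plane boundary exponent for that hull.

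The main obstacle I anticipate is making the ``sum over all loops touching $[-1,1]$'' step rigorous: a priori there are infinitely many such loops, most of them tiny, and one must argue that the contribution of small loops to $E_R$ is negligible and that the few loops large enough to matter each contribute $O(R^{-(8/\kappa-1)})$ with a uniform constant. The exploration-tree martingale framework (discover loops touching $[-1,1]$ one at a time; the event $E_R$ is realized when one of them, or the $\SLE_\kappa(\kappa-6)$ trunk itself, reaches radius $R$; after each completed loop the remaining configuration is again a wired CLE in a smaller domain whose conformal image near $[-1,1]$ is close to the identity by distortion) reduces everything to the single-curve estimate $\P[\SLE_\kappa\text{ reaches radius }R]\asymp R^{-(8/\kappa-1)}$, but the bookkeeping — in particular checking that the conformal maps appearing at each stage distort the circle of radius $R$ by only bounded factors, uniformly in the exploration — is where the care is needed. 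Once that is in place, the bound $\P[E_R]\le C R^{-(8/\kappa-1)}$ for $R>2$ follows, which is all that Lemma \ref{lastlemma} asserts.
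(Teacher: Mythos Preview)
Your proposal is in the right spirit (use the $\SLE_\kappa(\kappa-6)$ exploration), but it misses the key simplification that makes the paper's proof short, and the ``summing over loops'' step as you describe it has a genuine gap.

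The point you overlook is that, for $\kappa\in(4,8)$, the $\SLE_\kappa(\kappa-6)$ branch started at $-1$ and run until it swallows $1$ traces \emph{all} the $\CLE_\kappa$ loops touching $[-1,1]$ in one continuous curve. Hence $E_R$ is \emph{exactly} the event that this single $\SLE_\kappa(\kappa-6)$ path reaches the circle of radius $R$ before disconnecting $1$ from $\infty$. There is no need to decompose into individual loops, no sum, and no accumulated distortion to control. The paper then uses a short symmetry trick: run the symmetric exploration from $+1$ as well, note that the two hitting angles on the circle satisfy $\theta_+<\theta_-$, so one of them lies on the ``good'' side of $\pi/2$; on that side a harmonic-measure argument shows $O_T-W_T\ge cR$ at the hitting time, and the probability that $O-W$ reaches $cR$ before swallowing the opposite endpoint is precisely what Corollary~\ref{co} bounds by a constant times $R^{-(8/\kappa-1)}$.

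As for your actual argument: the ``running conditional probability that some future loop reaches radius $R$'' is a martingale with initial value $\P[E_R]$, so it cannot be used to bound $\P[E_R]$ from above; the supermartingale remark is circular. The alternative of summing individual one-loop bounds is exactly the obstacle you flag: there are infinitely many loops touching $[-1,1]$, and without a further input (a summability statement for the appropriate powers of their ``sizes'') the sum diverges. Your distortion worry is also real --- once a few macroscopic loops have been traced, the uniformizing maps need not be close to the identity near the circle of radius $R$. The paper's approach sidesteps all of this by working with the single $\SLE_\kappa(\kappa-6)$ curve and reducing to a Bessel-process estimate already established in Corollary~\ref{co}.
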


\begin {proof}
Here are one possible way to deduce this result from our previous estimates: 

Let us first discover the $\CLE_\kappa$ loops that intersect $[-1,1]$ from left to right (and tracing the loops in a clockwise manner), starting from $-1$ via an $\SLE_\kappa (\kappa -6)$ started at $-1$.  Then, the event $E_R$ holds if and only if this $\SLE_\kappa (\kappa -6)$ will hit the circle of radius $R$ for the first time (at some point $R 
\exp ( {i \theta_-}) $) before it disconnects $1$ from infinity. 

One can also use the symmetric procedure with respect to the imaginary axis: One can start from $1$ and discover the loops from right to left, and trace them 
in a counterclockwise manner. The event $E_R$ holds if and only if this process will 
hit the circle of radius $R$ (at some point $R  \exp ({i \theta_+})$) before disconnecting $-1$ from infinity. 

Clearly, when $E_R$ holds, $0< \theta_+ < \theta_- < \pi$ so that at least one of the two events $\{ \theta_+ \le  \pi /2 \}$ and $ \{ \theta_- \ge \pi /2 \}$ holds. By symmetry, these two events have the same probability.  But if $\theta_- \ge \pi /2$, then it is easy to see that for some constant $c$ independent of $R$, the value of $O_T - W_T$ corresponding to that hitting time $T$ of the circle of radius $R$ is greater than some constant $c$ times $R$: One can for instance bound from below (by $c_1 R /y$) the probability that a Brownian motion started from $iy$ for some very large $y$, hits the circle of radius $R$ on the side with positive real value, then stays in the positive quadrant until its imaginary value hits $R/2$, and then exits the upper half-plane on $(-R, -R/2)$ without exiting the disk of radius $R$ around the origin -- and note that when this event happens, then necessarily the Brownian motion (obtained by conformal image of the previous one) 
started from $iy + o(y)$ exits the upper half-plane in the interval $[W_T, O_T]$ (and this event has probability of order $c_2(O_T-W_T)/y$ as $y \to \infty$). 

Hence, we conclude that the probability of $E_R$ is bounded by twice the probability that for the $\SLE_\kappa (\kappa -6)$ started from $-1$, $O-W$ reaches $cR$ before swallowing $1$, which is the quantity we have  derived the asymptotic behavior of  in Corollary \ref {co} (modulo scaling)  
\end {proof}

\section {Proof of Lemma~\ref{main3}}
\label {Sproof2}
In the present section, we will prove Lemma~\ref{main3}. This will complete the proof of Theorem~\ref{mainprop} for the remaining case $\kappa \in (8/3, 4)$.

Let us consider $\gamma$ an $\SLE_\kappa$ from $0$ to infinity in the upper half-plane,  driven by 
$W_t = \sqrt {\kappa} {\beta_t}$, and let $g_t$ be the uniformizing {conformal} map from $\HH \setminus \gamma [0,t]$ onto $\HH$ so that $g_t(z) = z + o(1)$ as  $z \to \infty$. Recall that  $\partial_t g_z (z) = 2 / ( g_t (z) - W_t )$.

We denote by ${\mathcal R}$ the independent one-sided restriction sample attached to $[1- \eps, 1]$. Our goal is to estimate the probability of the event $C (\eps)$ that 
$\gamma$ intersects ${\mathcal R}$. Let us define the function $Q$ on $(0,1)$ by 
$$ 
Q ( 1 - \eps ) := 1- \P[ C (\eps) ].$$
Then, 
$$
Q( 1 - \eps)  = \P[ \gamma \cap {\mathcal R} = \emptyset ] = \lim_{t \to \infty} \P[ \gamma [0,t] \cap {\mathcal R} = \emptyset ] = \lim_{t \to \infty}
\E \left[ \left( \frac { \eps^2  g_t' (1-\eps) g_t' (1)} {(g_t (1) - g_t (1- \eps))^2} \right)^\alpha \right].$$
If we write $I_t = g_t (1)$ and $V_t = g_t (1 - \eps)$, then we note that 
$$ 
Q (( V_t - W_t ) / (I_t - W_t)) \times \left(  \frac {\eps^2  g_t' (1-\eps) g_t' (1)} {(g_t (1) - g_t (1- \eps))^2} \right)^\alpha 
$$ 
is a bounded martingale, and we deduce, using the standard machinery that $Q$ is smooth and is a solution to the ODE 
$$ \frac {\kappa}{2} x^2 (x-1)  Q'' (x) + ((\kappa -2 ) x - 2) x Q' (x) - 2 \alpha (x-1) Q (x) = 0 
$$ 
on $(0,1)$ {with} boundary conditions $Q(0) = 0$ and $Q(1)=1$. In fact, if we write 
$Q(x) = x^{2/ \kappa} A(x)$, then $A$ solves the following hypergeometric differential equation
\begin {equation} 
 \label {ODE2}
 x(1-x) A'' + ( -2x + 8/ \kappa) A' + (16/\kappa^2 - 4/\kappa ) A = 0 .
 \end {equation}
This means (see Appendix~\ref{App2}) in particular that $A$ is a linear combination of the same function $f$ as in Section~\ref{S2}, i.e., 
$$f(x):= F ( 4/ \kappa, 1- 4/\kappa, 8/\kappa ; x)$$ and of another function that {diverges} like 
$x^{1-8/\kappa}$ as $x \to 0^+$. By the boundary conditions for $Q$, we conclude that $A$ is a multiple of $f$ and more precisely that 
$ A(x) = f(x) / f(1) $, so that 
$$ Q (x) = x^{2/ \kappa} f(x) / f(1).$$

Recall that our goal is to estimate $\P[ C (1-x) ] =  1 - Q (x)$ as $x \to 1$. For this purpose, we express (via the connection formula (\ref {connection1}))
the hypergeometric function $f$ as a linear combination of the two natural 
independent hypergeometric functions that solve the same ODE in the neighborhood of $1$, i.e., we write $f$ as a linear combination 
of $$f_1 (x) :=  F( 4/ \kappa, 1- 4 / \kappa , 2 - 8 / \kappa ; 1-x )$$ 
and of 
$$ f_2 (x) := (1-x)^{8/\kappa - 1} F( 4/\kappa, 12/\kappa -1 , 8/\kappa ; 1-x )$$
and we get
$$ f(x) = f(1) f_1 (x) - \eta  f_2 (x),$$
where $$\eta: = - \frac { \Gamma (8/\kappa) \Gamma (1- 8 / \kappa)}{\Gamma (4/\kappa) \Gamma (1-  4/\kappa)} = \frac 1 {- 2 \cos (4\pi / \kappa)}. $$
Hence, we see that as $x = 1 - \eps \to 1$, 
$$ f(1-\eps) = f(1) - \eps f'(1)  - \eta \eps^{8/\kappa -1} + O(\eps^2)$$
(recall that $1 \le  8/\kappa - 1 < 2$ because $ \kappa \in (8/3, 4]$). We can note that $f'(1) = - 2 f(1)/\kappa$ (which follows for example from (\ref {ODE2})), so that
$$ \frac {f(1-\eps)}{f(1)} = 1 + \frac {2}{\kappa} \eps - \frac {\eta}{f(1)} \eps^{8/\kappa -1 } + O (\eps^2).$$
If we now expand $Q(1-\eps) = (1-\eps)^{2 / \kappa} f(1-\eps) / f(1)$ when $\eps \to 0$, 
we get that 
\[ \P[ C ( \eps) ] =  1 - Q(1-\eps)  \sim      \frac {\eta}{f(1)} \eps^{8/\kappa -1 }, \]
 which concludes the proof.

\appendix
\section{Connection probabilities for discrete O($N$) models}
\label {App1}
Let us very quickly browse through the properties about hook-up probabilities in squares of discrete FK($q$) percolation models and of O($N$) models that we have been referring to in this paper.
All these facts are elementary and classical (the reader can consult for instance \cite{DuminilParafermionic} and the references therein).

We will first describe the example of the fully packed version of the O($N$) model on the square lattice 
(which is in fact directly related to the critical FK model on the square lattice for $q = N^2$).
This fully-packed O($N$) model is the model where each small square in the domain is filled with one 
\begin{figure}[ht!]
  \includegraphics[scale=0.25]{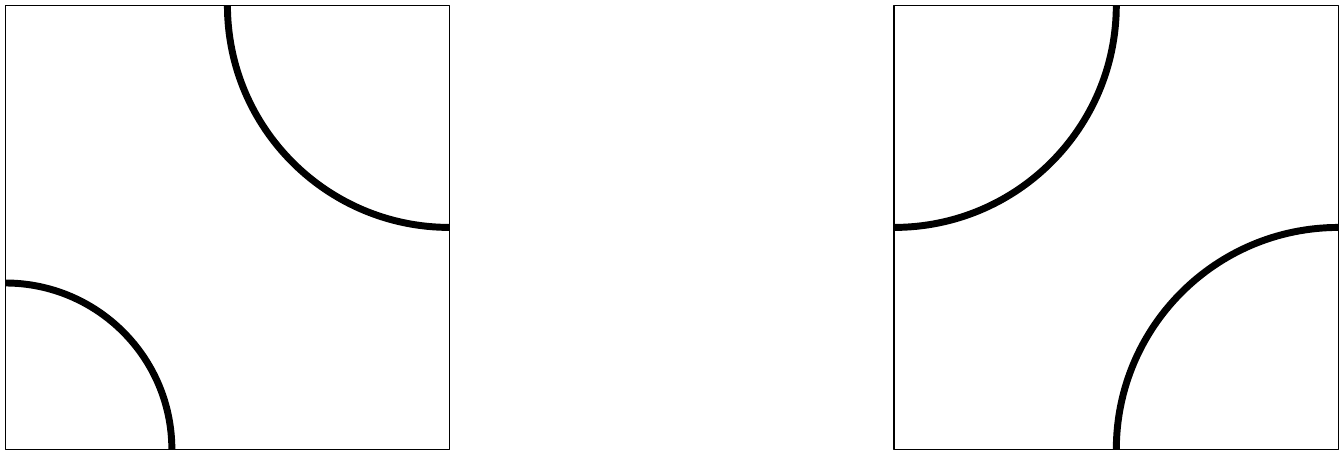}
  \caption{The two tiles for the fully packed O($N$) model}
  \label{pictwooptions}
\end{figure}
of the two possible options depicted in Figure~\ref{pictwooptions}. 

Then, when one sets boundary conditions as in the middle of Figure~\ref{picON}, one gets a collection of loops as in the right of Figure~\ref{picON}. 
In the fully-packed  O($N$) model, the probability of a configuration is chosen to be proportional to $N^L$ where $L$ is the number of loops in the configuration. 
\begin{figure}[ht!]
  \includegraphics[scale=0.45]{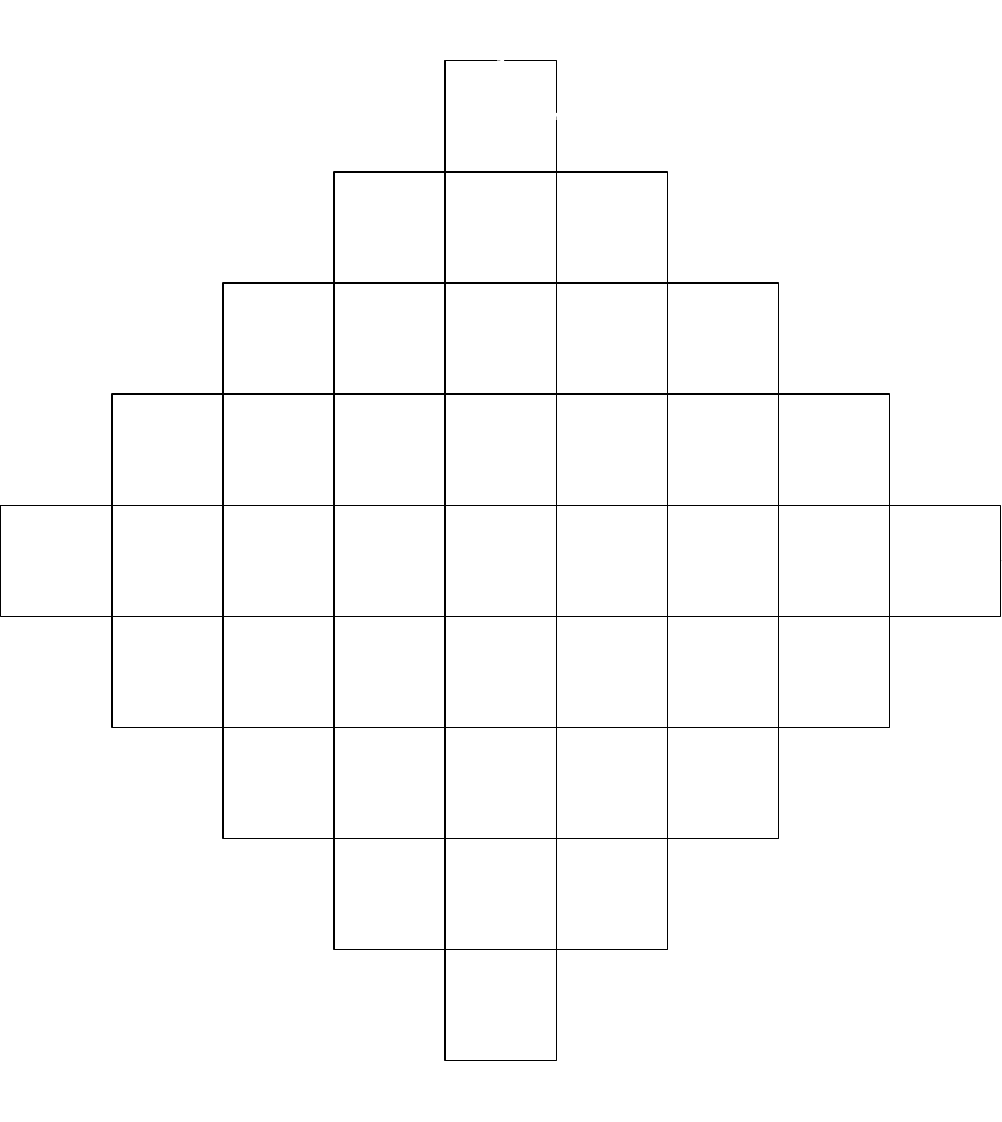}
   \includegraphics[scale=0.45]{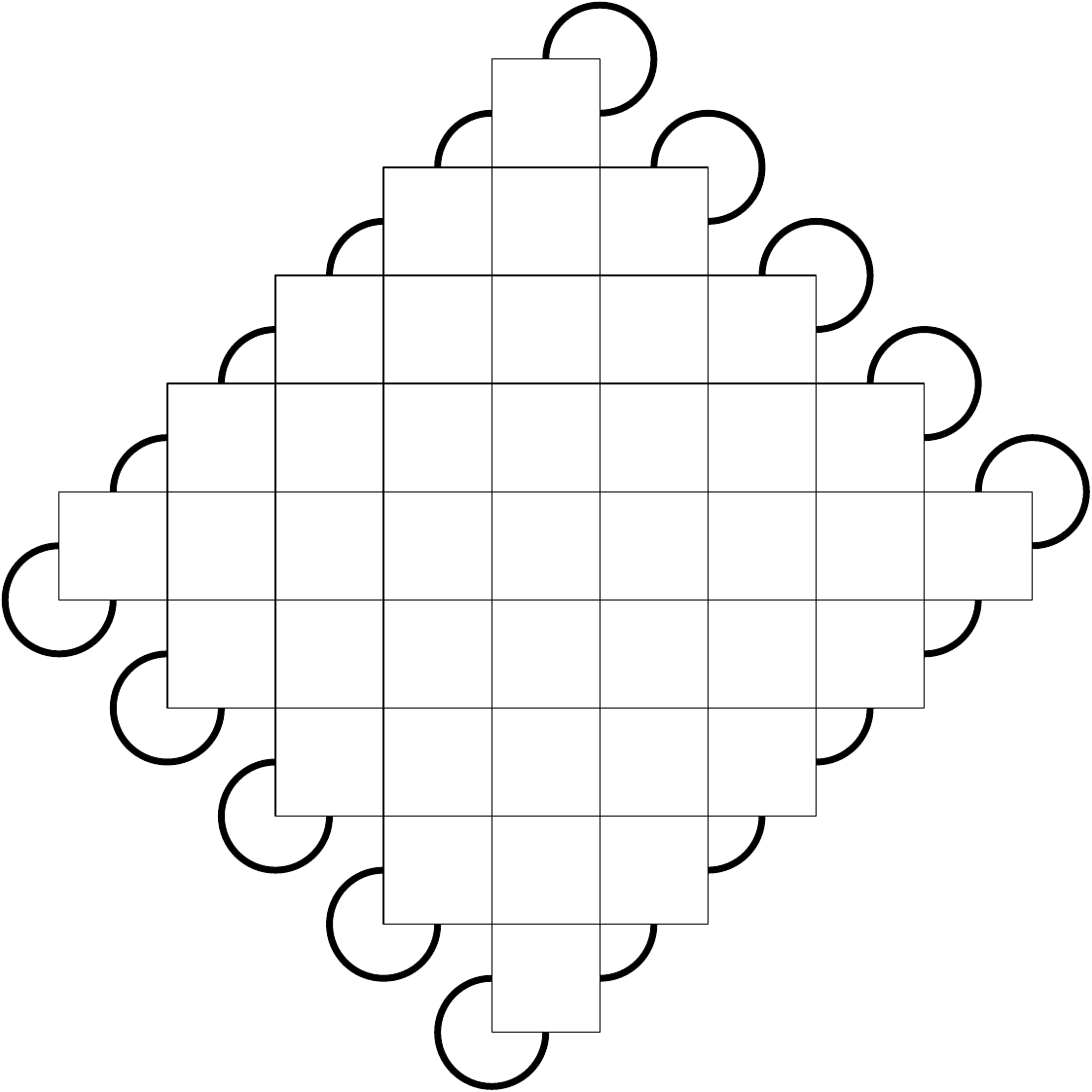}
   \includegraphics[scale=0.45]{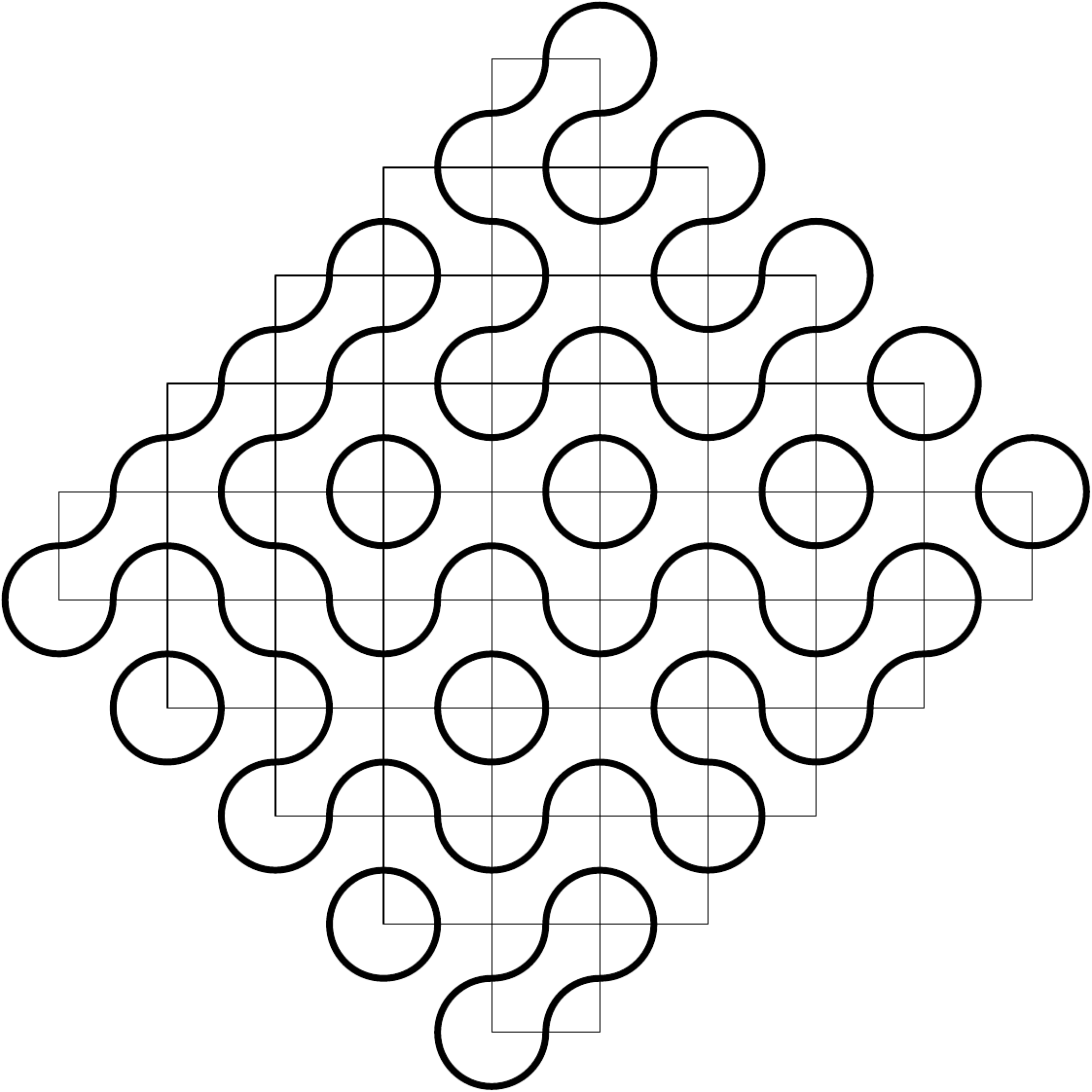}
  \caption{The fully-packed O($N$) model (with free boundary conditions)}
  \label{picON}
\end{figure}
When one explores the tiles of the O($N$) model starting from two corners, exploring the loops a ``discrete Markovian way''  {(loosely speaking: one picks a strand emanating from the boundary and explores this strand and the tiles it traverses, until one completes the loop, and then one picks a new strand from which to  explore without using any information of the not-yet-revealed tiles),} one ends up with a configuration {as} in Figure~\ref{picONdiscovery}. The conditional 
distribution of the remaining-to-be discovered configuration is now the discrete analog of our CLE with two wired boundary conditions. 
\begin{figure}[ht!]
  \includegraphics[scale=0.45]{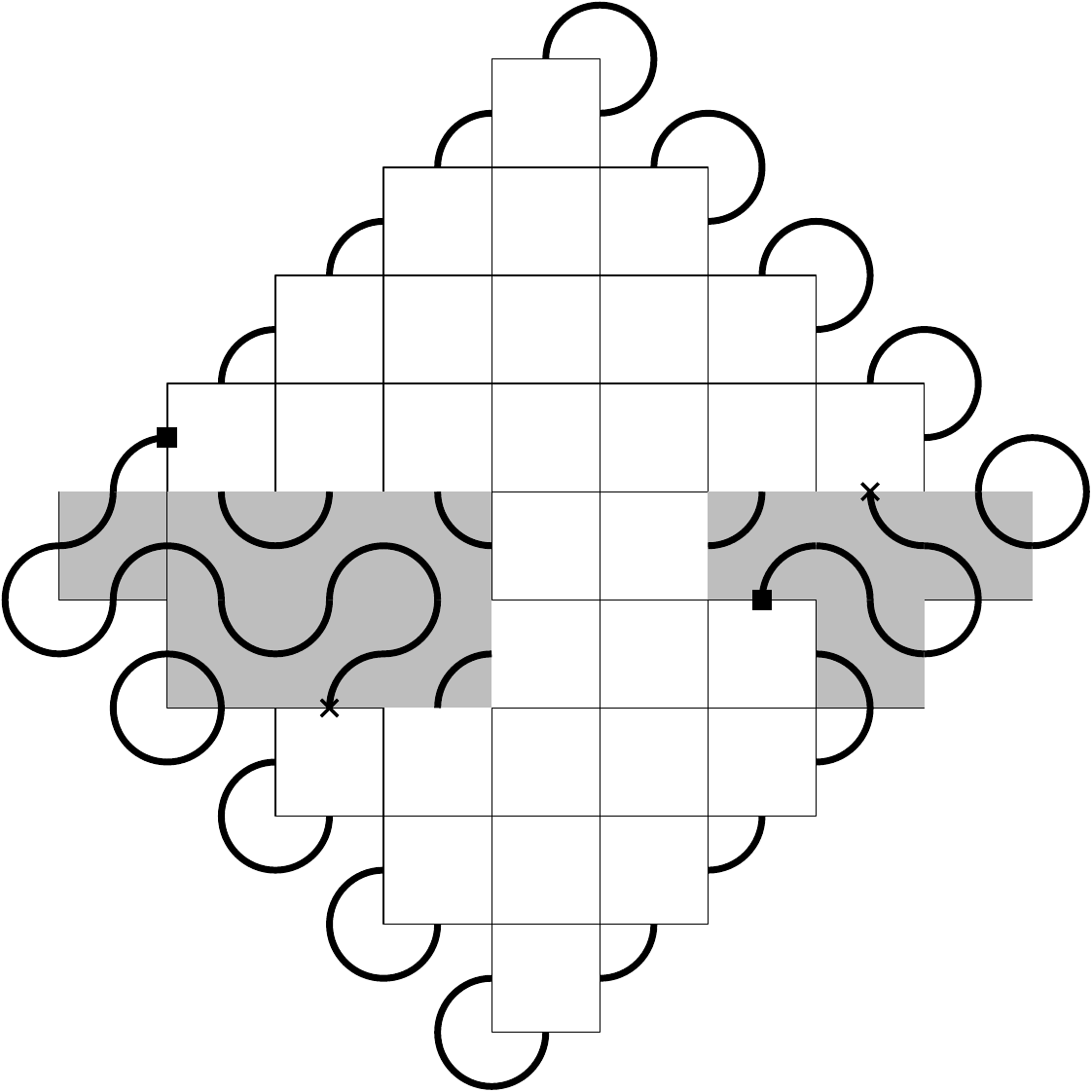}
  \caption{Exploring the fully packed O($N$) model leads to an O($N$) model with two (longer) wired arcs.}
  \label{picONdiscovery}
\end{figure}
Examples of {the} fully-packed O($N$) {model} with two wired boundary parts in the original square correspond to the  choice of boundary conditions depicted in the left of Figure~\ref{picONsample}, 
where the probability of a configuration is 
still proportional to the number of created loops (also taking into account the loops that go through the boundary). 

We can note that when one is given a configuration for which the two boundary arcs are joined together into a single loop, then if one rotates the configuration by 90 degrees without rotating the boundary conditions, one has a configuration with exactly one more loop.  
It therefore follows that the probability that the two boundary arcs are joined into a single loop is $N$ times smaller than the probability that 
they are part of two different loops. 
\begin{figure}[ht!]
\includegraphics[scale=0.45,angle=90]{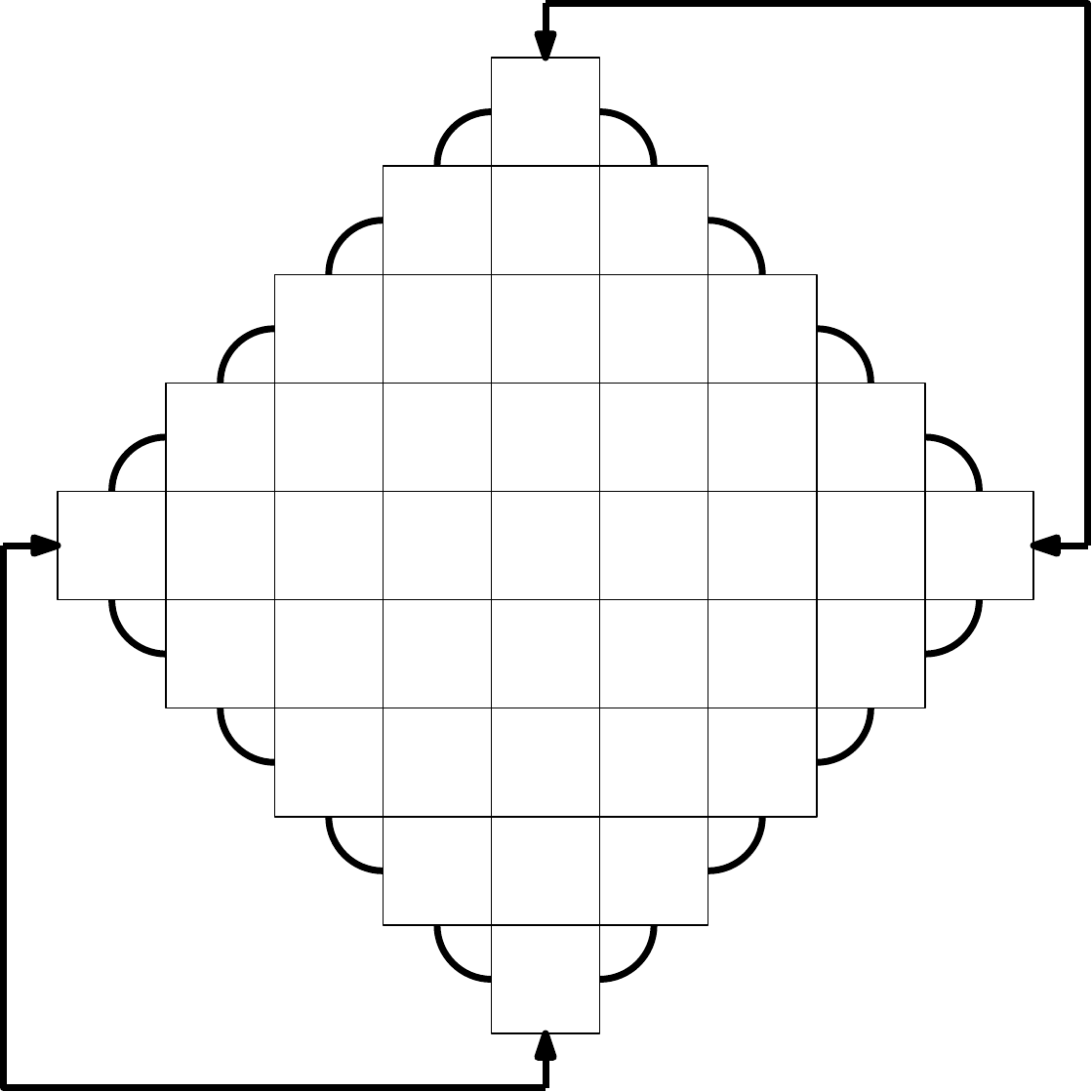}
\quad
  \includegraphics[scale=0.45]{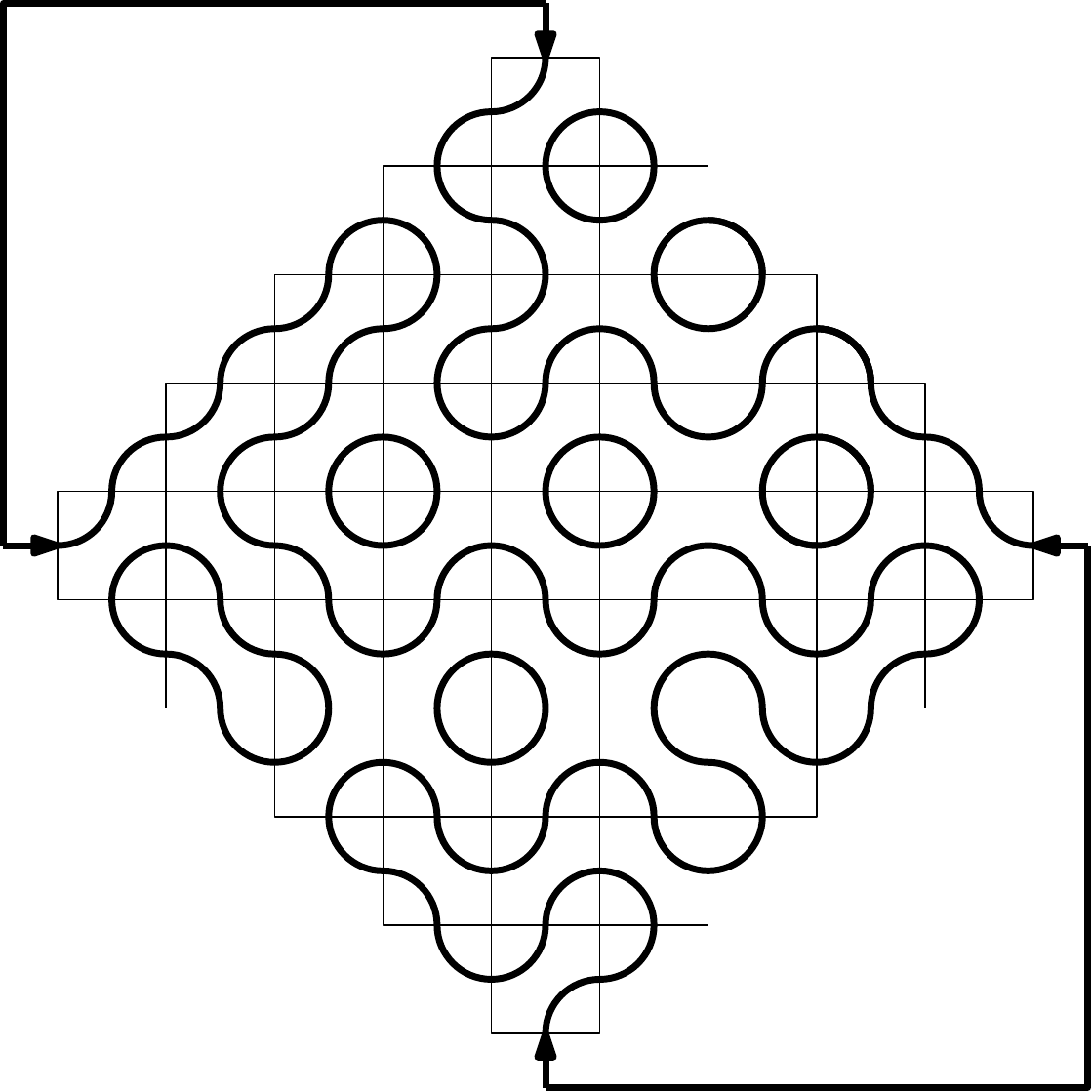}
  \quad
  \includegraphics[scale=0.45,angle=90]{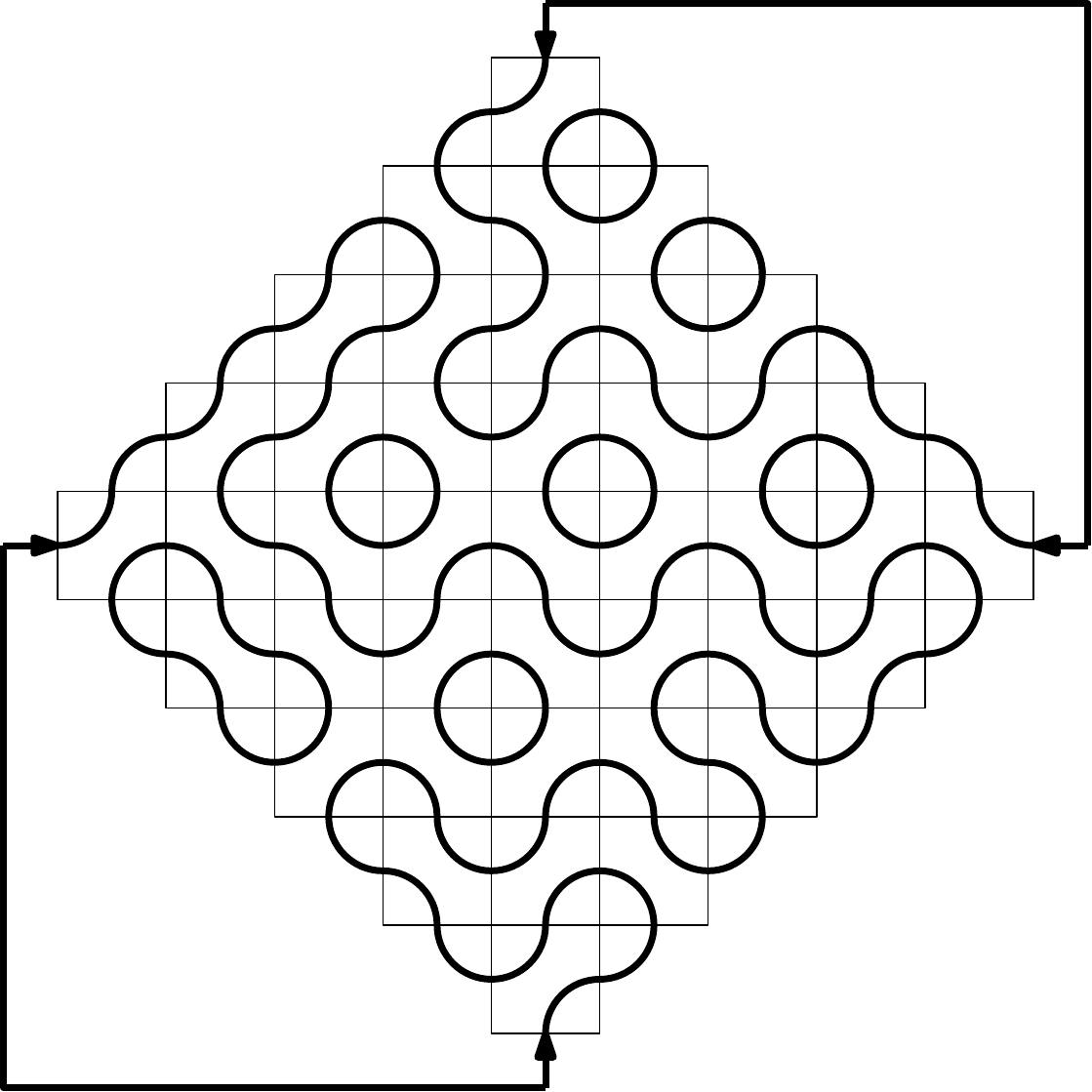}
  \caption{The boundary conditions (left). Rotating the middle configuration by 90 degrees creates exactly one additional loop. 
  The probability that the boundary arcs hook up into one single loop (as in the middle picture) is $1 / (1+N)$.}
  \label{picONsample}
\end{figure}In other words, in Figure~\ref{picONsample}, the probability 
of the event that the two wired boundary arcs are part of the same loop is $1 / ( 1+ N)$. 
\medbreak

A variation of the previous fully-packed loop model is to allow for {additional} configurations. This time, one considers the square {as} on the left-hand side of Figure~\ref{picON}, and an admissible configuration is when one fills each 
tile with one of the seven tiles depicted in Figure~\ref{seventiles}, in such a way that one only creates closed loops.
\begin{figure}[ht!]
  \includegraphics[scale=0.45,angle=-90]{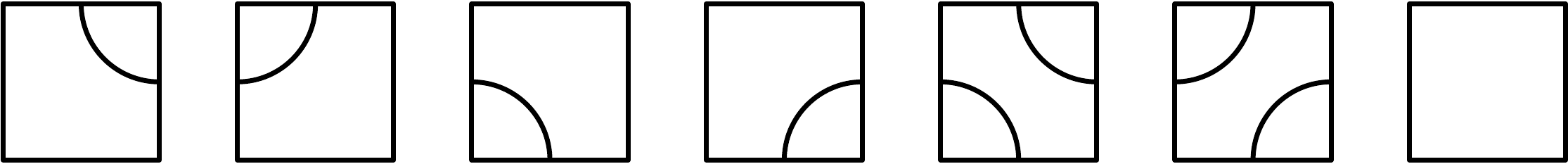}
  \caption{The seven tiles}
  \label {seventiles}
\end{figure}
One can then choose a parameter $\mu$, and weight each configuration by $N^L \mu^l$ where $l$ denotes the cumulated length of all the loops.
The previous fully-packed case corresponds to the limit 
when $\mu \to \infty$. Then exactly the same arguments lead to the definition of the corresponding model in the square with two wired boundaries, as depicted in Figure~\ref{dilute}, and to the fact that for this model the probability that the two boundary arcs are part of the same loop is also $1/ (1+N)$, regardless of $\mu$. 
\begin{figure}[ht!]
  \includegraphics[scale=0.45]{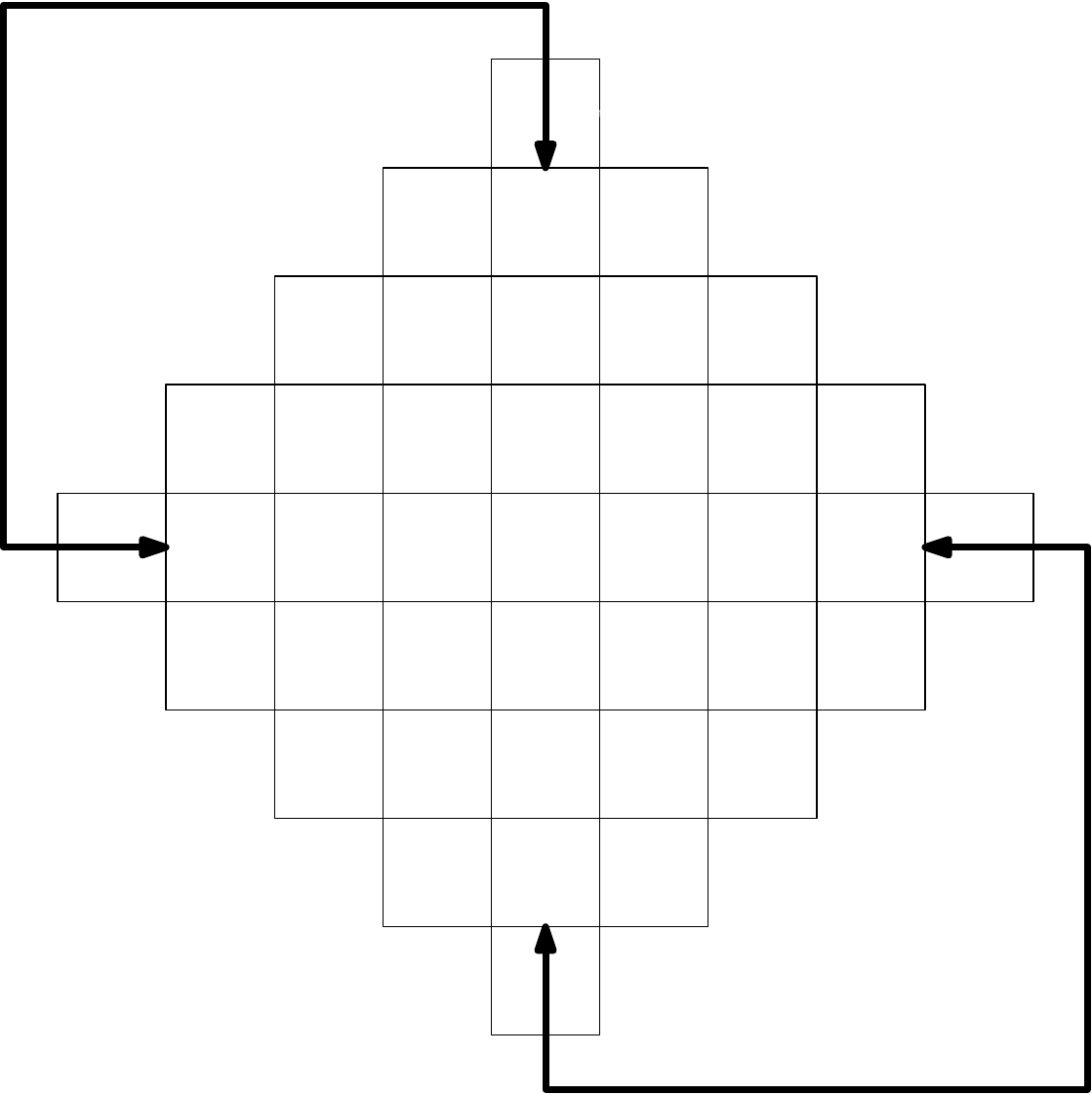}
  \quad
  \includegraphics[scale=0.45]{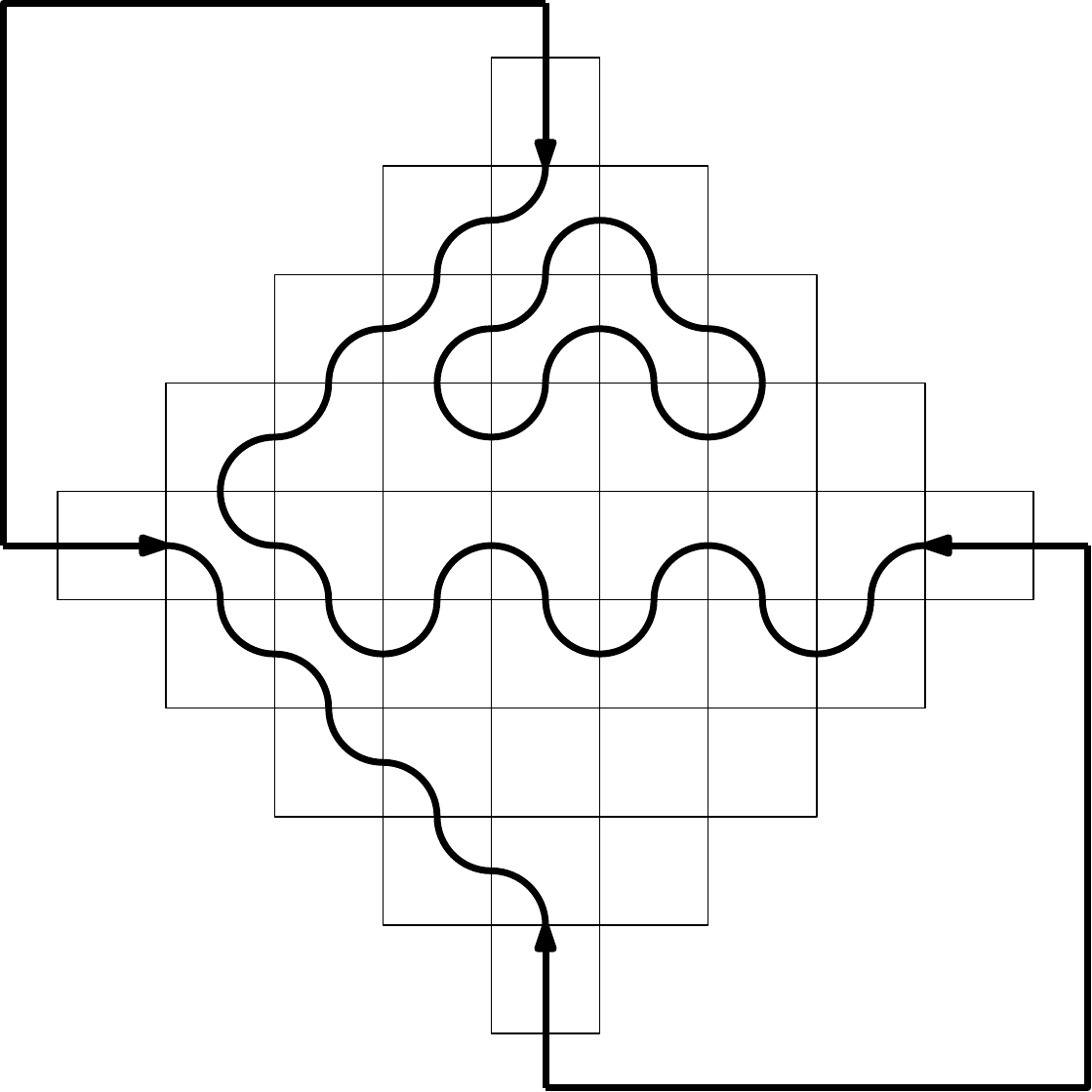}
  \caption{The boundary conditions (left). The probability that the boundary arcs hook up into one single loop is again $1 / (1+N)$.}
  \label {dilute}
\end{figure}
Note that this property of O($N$) models is actually quite robust and works also for more general models as long as the probability of a configuration is the product of local weights times $N^L$. It also 
holds on other lattices, as long as they have enough symmetries. 

The asymptotic behavior of the O($N$) model {depends on the choice of $\mu$}.  It is conjectured (see \cite{kn,Sheffield}) that when $N \le 2$, for a well-chosen critical value $\mu(N)$, it converges to a simple $\CLE_\kappa$, while for large $\mu$, it converges to a non-simple $\CLE_\kappa$. 

For the FK-percolation model on the square lattice (see for instance \cite{GrimmettRC} and the references therein for {its} definition and basic properties), the corresponding crossing property can be stated as follows. Consider $q > 0$, and the FK($q$) model on the rectangle 
$[0,n+1] \times [0, n]$, where the left-hand boundary is wired and the right-hand boundary is wired (i.e., all points on the left boundary are identified as one single point, and all points 
on the right hand boundary are identified as another point). Consider also the self-dual value of the parameter $p$, i.e., take $p = \sqrt {q} / (1 + \sqrt {q})$. Then, the probability of a left-to-right crossing of this rectangle is $1 / ( 1+ \sqrt {q} )$. 

One way to see it is via the usual duality trick because the dual configuration $w^*$ to a configuration $w$ is also a critical FK($q$) model on the dual graph, which is the rectangle $[1/2, N+1/2] \times [-1/2, N +1/2 ]$ but with the top and bottom sides identified as one single site (not two as for the left and right boundaries before). Hence, it follows exactly the same 
law as $w$ rotated by 90 degrees, except that the configurations get an extra weight $1/q$ when there is no top to bottom crossing. Hence, if $\pi$ is the probability of a left to right crossing 
for $w$, one has $   1 - \pi = \pi   / ( \pi + (1 - \pi)/q )$ 
from which the statement follows. 
Another simple way is just to note that if we rotate the picture by 45 degrees, and look at the union of the outer boundaries of the collection of clusters and of the outer boundaries of dual clusters, one gets exactly 
the previous fully-packed O($N$) model with $N = \sqrt {q}$, with boundary conditions just as described in the O($N$) case above, so one can apply directly the previous considerations on fully-packed O($N$) models. 
See also for instance Section 2 of \cite{SmirnovFK}, or \cite{DuminilParafermionic}.

\section{Hypergeometric functions} 
\label {App2}
For the convenience of those readers {who} are not so acquainted {with} the basic properties of hypergeometric functions that we are using (or to refresh their memories), we try to very briefly recall them in the following page. When $a$, $b$, $c$ are real numbers, the hypergeometric function $F( a, b, c ;z) =  \null  _2F_1 ( a, b, c ;z) $ is defined for all $z$ in the open unit disk by the power series
$$ F ( a,b,c ; z) = \sum_{n \ge 0} \frac {(a)_n (b)_n}{(c)_n n! } z^n,$$ 
where $(a)_n = a ( a+1) \ldots (a+n-1)$ is the rising Pochhammer symbol (with the convention $(a)_0 =1$). 
When $c > a+b$ (which is  in fact the case for all hypergeometric functions that we write out explicitly as a function of $\kappa$ in this paper) this series converges also at $z =1$ and the function is continuous on the interval $[0,1]$. The value at $1$ can then be expressed in terms of the $\Gamma$ function: 
$$ F (a,b,c ; 1) = \frac { \Gamma (c) \Gamma (c-a-b)} {\Gamma (c-a) \Gamma (c-b)}.$$
When $a+b > c$, then the hypergeometric function $F(a,b,c ; x)$ diverges like a constant times $(1-x)^{c-a-b}$ when $x \to 1^-$. Indeed, one can check that 
$$ F (a,b,c ; x) = (1-x)^{c-a-b} F(c-a, c-b, c ; x).$$

The hypergeometric function $F(a,b,c; x)$ is a solution of the hypergeometric differential equation
\begin {equation}
 \label {HDE}
 x (1-x) f'' + ( c - (a+b+1) x) f' - ab f = 0
\end {equation}
on the interval $(0,1)$. Conversely, it is easy to check that when $c$ is not a non-negative integer, any solution to this equation on the interval $(0,1)$ is a linear combination 
of the two functions $F(a, b , c ; x)$ and $x^{1-c} F ( 1+a-c, 1+b-c , 2-c ; x)$. 

If we use the change of variables $y=1-x$, we can note that the equation (\ref {HDE}) gets transformed into another hypergeometric equation.
It therefore follows that a solution to (\ref {HDE}) on $(0,1)$ is also necessarily a linear combination of the two functions 
\begin {eqnarray*}
f_1 (x) &:= &F( a, b, a+b+1 - c ; 1-x) \\
f_2 (x) &:= &(1-x)^{c-a-b} F ( c-a , c-b, 1 + c - a- b; 1-x).
\end {eqnarray*}
In particular, using the particular values of those functions at $0$ and $1$, one gets that on $(0,1)$,
\begin {equation}
 \label {connection1}
 F(a,b,c ; x ) = \frac {\Gamma (c) \Gamma (c-a-b)} {\Gamma (c-a) \Gamma (c-b)} f_1 (x) + \frac {\Gamma (c) \Gamma (a+b -c)}{\Gamma (a) \Gamma (b)} f_2 (x),
\end {equation}
which is one of the connection formulas between hypergeometric functions (this is for instance 15.3.6 in \cite{AS}). 

Similarly, if one looks for solutions to (\ref {HDE}) on the interval $(1, \infty)$, one can use the change of variables $y= 1/x$ and see that when $a-b$ is not an integer, such a solution is necessarily a linear combination of the two functions
\begin {eqnarray*} 
 h_1 (x) &:=&   x^{-a} F (a, 1+a -c, 1 + a -b ; 1/x ), \\
h_2 (x)&:=&  x^{-b} F (b, 1+b-c, 1+ b-a; 1/x).
\end {eqnarray*}

Again, one can see that when $x \in (1,2)$ such a solution is also a linear combination of $f_1$ and 
$${\tilde f}_2 (x) := (x-1)^{c-a-b} F ( c-a , c-b, 1 + c - a- b; 1-x) .$$
In particular, 
\begin {equation}
 \label {connection2}
 h_1 (x) =  \frac { \Gamma (a-b+1) \Gamma (c-a-b)}{\Gamma (1-b) \Gamma (c-b)} f_1 (x) + 
 \frac{\Gamma (a-b+1 ) \Gamma (a+b-c)}{\Gamma (a) \Gamma (a-c+1)} {\tilde f}_2 (x),
\end {equation}
which is the other connection formula that we use in this paper (note that it describes in particular the precise asymptotic expansion of $h_1 (x)$ in the limit when $x \to 1^+$).

\subsubsection*{Acknowledgments}
 JM thanks Institut Henri Poincar\'e for support as a holder of the Poincar\'e chair, during which this work was completed.
WW is part of the NCCR Swissmap and acknowledges the support of the SNF grants 155922 and 175505.  He also thanks the Statslab of the University of Cambridge for its hospitality on several occasions during which part of the work for this project was completed.
We would also like to thank the referees for their comments. 

\begin {thebibliography}{99}

\bibitem {AS}
M. Abramowitz and I. A. Stegun.
{\em Handbook of mathematical functions},
Dover, 1965. 

\bibitem{ASW}
J.~Aru, A.~Sepulveda, and W.~Werner.
\newblock On bounded-type thin local sets of the two-dimensional {G}aussian
  free fields.
\newblock {\em J. Inst. Math. Jussieu}, J. Inst.  Math.  Jussieu, doi:10.1017/S1474748017000160, 1-28, 2017.

\bibitem {BBK}
M. Bauer, D. Bernard and K. Kyt\"ol\"a. 
Multiple Schramm-Loewner Evolutions and statistical mechanics martingales.
{\em J. Stat. Phys.}, 120:1125--1163, 2005.

\bibitem {bh}
S. Benoist, C. Hongler. 
The scaling limit of critical Ising interfaces is CLE(3). 
\newblock {\em ArXiv e-prints}, 2016

\bibitem{CN}
F.~Camia and C.~M. Newman.
\newblock Two-dimensional critical percolation: the full scaling limit.
\newblock {\em Comm. Math. Phys.}, 268:1--38, 2006.

\bibitem{CardyLN}
J.~Cardy.
\newblock Conformal field theory and statistical mechanics.
\newblock In {\em Exact methods in low-dimensional statistical physics and
  quantum computing}, 65--98. Oxford Univ. Press, 2010.

\bibitem{CDHKS}
D.~Chelkak, H.~Duminil-Copin, C.~Hongler, A.~Kemppainen, and S.~Smirnov.
\newblock Convergence of {I}sing interfaces to {S}chramm's {SLE} curves.
\newblock {\em C. R. Math. Acad. Sci. Paris}, 352:157--161, 2014.

\bibitem{ChelkakSmirnovIsing}
D.~Chelkak and S.~Smirnov.
\newblock Universality in the 2{D} {I}sing model and conformal invariance of
  fermionic observables.
\newblock {\em Invent. Math.}, 189:515--580, 2012.

\bibitem{Dubedat}
J.~Dub{\'e}dat.
\newblock Commutation relations for {S}chramm-{L}oewner evolutions.
\newblock {\em Comm. Pure Appl. Math.}, 60:1792--1847, 2007.

\bibitem{Dubedat2}
J.~Dub{\'e}dat.
\newblock Duality of {S}chramm-{L}oewner evolutions.
\newblock {\em Ann. Sci. \'Ec. Norm. Sup\'er.}, 42:697--724, 2009.

\bibitem {Dubedat3}
J. Dub\'edat. 
Eulerian integrals for commuting SLEs.
{\em J. Stat. Phys.} 123:1183--1218, 2006.

\bibitem{DubedatGFF}
J.~Dub{\'e}dat.
\newblock S{LE} and the free field: partition functions and couplings.
\newblock {\em J. Amer. Math. Soc.}, 22:995--1054, 2009.

\bibitem {DuminilParafermionic}
H. Duminil-Copin. 
{\em Parafermionic observables and their applications to planar statistical physics models.}
{Ensaios Matematicos}, Brazilian Math. Soc. vol. 25, 2013.

\bibitem{DGHMT}
H. Duminil-Copin, M. Gagnebin, M. Harel, I. Manolescu and Vincent Tassion. 
Discontinuity of the phase transition for the planar random-cluster and Potts models with $q>4$.
\newblock {\em ArXiv e-prints}, 2016

\bibitem{DST}
H. Duminil-Copin, V. Sidoravicius and V. Tassion. 
Continuity of the phase transition for planar random-cluster and Potts models with $1 \le q \le 4$,
{\em  Comm.  Math. Phys.} 349:47--107, 2017.
 
\bibitem {Dup}
B. Duplantier. 
Conformal fractal geometry and boundary quantum gravity, 
in: {\em Fractal Geometry and Applications: A Jubilee of Beno\^\i t Mandelbrot}, Proc. Symposia Pure Math. vol. 72, Part 2, 365--482, AMS, 2004. 

\bibitem{dms2014mating}
B.~{Duplantier}, J.~{Miller}, and S.~{Sheffield}.
\newblock {Liouville quantum gravity as a mating of trees}.
\newblock {\em ArXiv e-prints},  2014.

\bibitem{shef-kpz}
B.~Duplantier and S.~Sheffield.
\newblock Liouville quantum gravity and {KPZ}.
\newblock {\em Invent. Math.}, 185:333--393, 2011.

\bibitem{GrimmettRC}
G.~Grimmett.
\newblock {\em The random-cluster model}.
\newblock Springer, 2006.

\bibitem {kn} 
W. Kager and B. Nienhuis.
\newblock {A guide to stochastic L\"owner Evolution and its applications.}
\newblock {\em J. Stat. Phys}. 115:1149--1229, 2004.

\bibitem{KempaSmirnov}
A.~{Kemppainen} and S.~{Smirnov}.
\newblock {Random curves, scaling limits and Loewner evolutions}.
\newblock  {\em Ann. Probab.} 45:698--779, 2017.

\bibitem{ks2015fkcle}
A.~{Kemppainen} and S.~{Smirnov}.
\newblock {Conformal invariance of boundary touching loops of FK Ising model}.
\newblock {\em ArXiv e-prints}, 2015.

\bibitem{ks2015fkcle2}
A.~{Kemppainen} and S.~{Smirnov}.
Conformal invariance in random cluster models. II. Full scaling limit as a branching SLE
\newblock {\em ArXiv e-prints}, 2015.

\bibitem {kz}
P. Kleban, D. Zagier, 
Crossing probabilities and modular forms, 
{\em J.  Stat. Phys.}, 113:431--454, 2004. 

\bibitem{kp}
K. Kyt\"ol\"a and E. Peltola.
    Pure partition functions of multiple SLEs.
    {\em Comm. Math. Phys.}, 346:237--292, 2016.
    
\bibitem{Lawler}
G.~Lawler.
\newblock {\em Conformally invariant processes in the plane}, 
\newblock AMS, 2005.

\bibitem{LSW1}
G.~Lawler, O.~Schramm, and W.~Werner.
 Values of Brownian intersection exponents I, Half-plane exponents, 
 {\em Acta Math.} 187:237--273, 2001.

\bibitem{LSWr}
G.~Lawler, O.~Schramm, and W.~Werner.
\newblock Conformal restriction: the chordal case.
\newblock {\em J. Amer.\ Math.\ Soc.}, 16:917--955, 2003.

\bibitem{LSWLERW}
G.~Lawler, O.~Schramm, and W.~Werner.
\newblock Conformal invariance of planar loop-erased random walks and uniform
  spanning trees.
\newblock {\em Ann. Probab.}, 32:939--995, 2004.

\bibitem {lw}
G. Lawler and W. Werner. 
\newblock {The Brownian loop soup}.
\newblock {\em Probab. Th. rel. Fields}, 128:565--588, 2004.

\bibitem{MScle}
J.~Miller and S.~Sheffield.
\newblock {CLE}(4) and the {G}aussian free field, in preparation.

\bibitem{IG1}
J.~Miller and S.~Sheffield.
\newblock Imaginary geometry {I}: interacting {SLE}s.
\newblock {\em Probab. Theor. Rel. Fields}, 164:553--705, 2016.

\bibitem{IG2}
J.~Miller and S.~Sheffield.
\newblock Imaginary geometry {II}: reversibility of {${\rm SLE}\sb
  \kappa(\rho\sb 1;\rho\sb 2)$} for {$\kappa\in(0,4)$}.
\newblock {\em Ann. Probab.}, 44:1647--1722, 2016.

\bibitem{IG3}
J.~Miller and S.~Sheffield.
\newblock Imaginary geometry {III}: reversibility of {$\rm SLE_\kappa$} for
  {$\kappa\in(4,8)$}.
\newblock {\em Ann. Math.}, 184:455--486, 2016.

\bibitem{IG4}
J.~{Miller} and S.~{Sheffield}.
\newblock {Imaginary geometry IV: interior rays, whole-plane reversibility, and
  space-filling trees}.
\newblock {\em Probab. Theor. Rel. Fields}, 2017.

\bibitem{MSWCLEpercolation}
J.~Miller, S.~Sheffield, and W.~Werner.
\newblock CLE Percolations.
\newblock {\em Forum of Mathematics, Pi}, Vol. 5, e4, 102 pages, 2017.

\bibitem{mswrandomness}
J.~Miller, S.~Sheffield, and W.~Werner.
\newblock Non-simple SLE curves are not determined by their range.
\newblock {\em ArXiv e-prints}, 2016.

\bibitem{mswbetarho}
J.~Miller, S.~Sheffield, and W.~Werner.
\newblock Simple conformal loop ensembles and Liouville quantum gravity.
\newblock in preparation, 2017.

\bibitem{MillerWu}
J. Miller and H. Wu. 
Intersections of SLE Paths: the double and cut point dimension of SLE.
{\em Probab. Theor. Rel. Fields}, 167:45-105, 2017.

\bibitem{N}
B. Nienhuis. 
Exact Critical Point and Critical Exponents of O(n) Models in Two Dimensions. 
{\em Phys. Rev. Lett.} 49:1062, 1982.

\bibitem{Qian2}
W. Qian.
\newblock Conditioning a {B}rownian loop-soup cluster on a portion of its
  boundary.
\newblock {\em Ann. Inst. Henri Poincar\'e}, to appear.

\bibitem{QianWerner}
W. Qian and W. Werner.
\newblock Decomposition of brownian loop-soup clusters.
\newblock {\em J. Europ. Math. soc}, to appear.

\bibitem {RS} 
S. Rohde and O. Schramm.
Basic properties of SLE, 
{\em Ann.  Math.}, 161:883--924, 2005.

\bibitem {Schramm0}
O. Schramm. Private communication. 1999. 

\bibitem{Schramm}
O.~Schramm.
\newblock Scaling limits of loop-erased random walks and uniform spanning
  trees.
\newblock {\em Israel J. Math.}, 118:221--288, 2000.

\bibitem{ss2010continuumcontour}
O.~{Schramm} and S.~{Sheffield}.
\newblock {A contour line of the continuum Gaussian free field}.
\newblock {\em Probab. Theor. Rel. Fields}, 157:47--80, 2013.

\bibitem{ssw2009radii}
O.~Schramm, S.~Sheffield, and D.~B. Wilson.
\newblock Conformal radii for conformal loop ensembles.
\newblock {\em Comm. Math. Phys.}, 288:43--53, 2009.

\bibitem{sw2005coordinate}
O.~Schramm and D.~B. Wilson.
\newblock S{LE} coordinate changes.
\newblock {\em New York J. Math.}, 11:659--669, 2005.

\bibitem{Sheffield}
S.~Sheffield.
\newblock Exploration trees and conformal loop ensembles.
\newblock {\em Duke Math. J.}, 147:79--129, 2009.

\bibitem{shef-burger}
S.~Sheffield.
\newblock Quantum gravity and inventory accumulation.
\newblock {\em Ann. Probab.}, 44:3804--3848, 2016.
  
\bibitem{SheffieldWerner}
S.~Sheffield and W.~Werner.
\newblock Conformal loop ensembles: the {M}arkovian characterization and the
  loop-soup construction.
\newblock {\em Ann. Math.}, 176:1827--1917, 2012.

\bibitem{SmirnovPercolation}
S.~Smirnov.
\newblock Critical percolation in the plane: conformal invariance, {C}ardy's
  formula, scaling limits.
\newblock {\em C. R. Acad. Sci. Paris S\'er. I}, 333:239--244, 2001.

\bibitem{SmirnovFK}
S.~Smirnov.
\newblock Conformal invariance in random cluster models. {I}. {H}olomorphic
  fermions in the {I}sing model.
\newblock {\em Ann. Math.}, 172:1435--1467, 2010.

\bibitem{SmirnovICM}
S.~Smirnov.
\newblock Discrete complex analysis and probability.
\newblock In {\em Proceedings of the {I}nternational {C}ongress of
  {M}athematicians. {V}olume {I}}, pages 595--621, 2010.
  
 \bibitem {Wcras}
 W. Werner. 
 SLEs as boundaries of clusters of Brownian loops. 
 {\em C.R. Acad. Sci. Paris Ser. I} 337:481--486, 2003.
 
 \bibitem {Wconfrest}
 W. Werner. 
Conformal restriction and related questions.
{\em  Probab. Surveys}, 
    2:145--190, 2005.

\bibitem{Wlh}
    W. Werner. 
  Some recent aspects of conformally invariant systems. 
  in {\em Les Houches School Proceedings: Session LXXXII, 2005}, Mathematical Statistical Physics, 57--99, 2006.
  
\bibitem{Wlnperco}
W. Werner. 
 Lectures on two-dimensional critical percolation. IAS-Park City 2007 summer school, 297-360, AMS, 2009.

\bibitem{WernerWu}
W.~Werner and H.~Wu.
\newblock On conformally invariant {CLE} explorations.
\newblock {\em Comm. Math. Phys.}, 320:637--661, 2013.

\bibitem{WernerWu2}
W. Werner and H. Wu.
\newblock From {${\rm CLE}(\kappa)$} to {${\rm SLE}(\kappa,\rho)$}'s.
\newblock {\em Electron. J. Probab.}, 18:no. 36, 2013.

\bibitem {Werness}
B. Werness. 
\newblock The parafermionic observable for SLE. 
\newblock {\em J. Stat. Phys.},  149:1112--1135, 2012.

\bibitem {Wu}
H. Wu, 
Convergence of the Critical Planar Ising Interfaces to Hypergeometric SLE. 
\newblock {\em ArXiv e-prints}, 2016.

\bibitem{Zhan1}
D.~Zhan.
\newblock Duality of chordal {SLE}.
\newblock {\em Invent. Math.}, 174:309--353, 2008.

\bibitem{Zhan2}
D.~Zhan.
\newblock Duality of chordal {SLE}, {II}.
\newblock {\em Ann. Inst. Henri Poincar\'e Probab. Stat.}, 46:740--759,
  2010.
  
\bibitem{Zhan3}
D. Zhan.
\newblock Reversibility of some chordal {${\rm SLE}(\kappa;\rho)$} traces.
\newblock {\em J. Stat. Phys.}, 139:1013--1032, 2010.

\end {thebibliography} 

\end{document}